\title{A note on continuous fractional wavelet transform in $\mathbb{R}^n$}
\author{Amit K. Verma$^a$\thanks{Corresponding author email: akverma@iitp.ac.in}, Bivek Gupta$^b$\\{\small\textit{$^{a,b}$ Department of Mathematics, IIT Patna, Bihta, Patna 801103.}}}
\theoremstyle{definition}
\newtheorem{defn}{Definition}[section]
\newtheorem{theorem}{Theorem}[section]
\newtheorem{corollary}{Corollary}[section]
\newtheorem{lemma}{Lemma}[section]
\newtheorem{note}{Note}[theorem]
\newtheorem{prop}{Proposition}
\begin{document}
\maketitle
\begin{abstract}
In this paper, we have studied continuous fractional wavelet transform (CFrWT) in $n$-dimensional Euclidean space $\mathbb{R}^n$  with scaling parameter $\boldsymbol a=(a_{1},a_{2},\ldots,a_{n}),$ such that none of $a_{i}'s$ are zero. Necessary and sufficient condition for the wavelet admissibility of a function is established with the help of fractional convolution. Inner product relation and reconstruction formula for the CFrWT depending on two wavelets are obtained along with the reproducing kernel function, involving two wavelets, for the image space of CFrWT. Heisenberg's  uncertainty inequality and Local uncertainty inequality for the CFrWT are obtained. Finally, boundedness of the transform on the Morrey space $L^{1,\nu}_{M}(\mathbb{R}^n)$ and the estimate of $L^{1,\nu}_{M}(\mathbb{R}^n)$-distance of the CFrWT of two argument functions with respect to different wavelets are discussed.\vspace{0.2cm}
\end{abstract}
{\textit{Keywords}:} Continuous Fractional Fourier Transform; Heisenberg's Uncertainty Principle; Continuous Fractional Wavelets Transform; Admissibility Condition; Morrey Space\\
{\textit{AMS Subject Classification}:} 42C40, 46E30, 46F05
\section{Introduction}
The term fractional Fourier transform (FrFT) was first coined by Namias (\cite{namias1980fractional}) in the year 1980. The flaw in the factor preceding the exponential in the definition of the kernel was rectified by McBride et al. (\cite{mcbride1987namias}) in 1987. Almeida (\cite{almeida1994fractional}), Bultheel et al. (\cite{bultheel2005introduction}) studied the properties of the kernel involved in the FrFT. The $n$-dimensional FrFT (\cite{toksoy2015function,upadhyay2014characterization,zayed2018two}) is the tensor product of $n$ copies of the one-dimensional FrFT. Zayed (\cite{zayed2018two}) came up with the new two-dimensional FrFT which is not the tensor product of two one-dimensional FrFT. Dai et al. (\cite{dai2017new}) introduced the definition of CFrWT which is more general to the transform defined by Shi et al. (\cite{shi2012novel}). It is also a generalization of classical wavelet transform. It represents a signal in time-fractional-frequency plane and provide the time domain and fractional frequency domain information simultaneously, which is not possible for the FrFT. Bahri et al. (\cite{bahri2017logarithmic}), Prasad et al. (\cite{prasad2015continuous}), Prasad et al. (\cite{prasad2014generalized}) studied CFrWT in one-dimension with the scaling parameter restricted over the set of positive reals. They also discussed the properties of the CFrWT on generalized Sobolev space which is the natural extension of the properties of classical wavelet transform  studied in \cite{chuong2002integral, pathak2009wavelet,rieder1990wavelet,triintegral}.\\
The spread of a function in the time domain and in the frequency domain are inversely related, i.e., more the function is localized in the time domain the more will be the function spread out in the frequency domain. This fact is supported by the Heisenberg's uncertainty inequality (\cite{cowling1984bandwidth,folland1997uncertainty}), where the measure of spread of a function is given by the standard deviation of the square of the absolute value of the function. Apart from being widely spread in the frequency domain, when the function is localized in time, the function in the frequency domain cannot be highly localized at any points, this is interpreted by Local uncertainty inequality (\cite{folland1997uncertainty,price1983inequalities,price1987sharp}). The Heisenberg's uncertainty inequality in two fractional Fourier transform domain can be found in \cite{aytur1995non,guanlei2009logarithmic,ozaktas1995fractional,zhao2009uncertainty}. From an idea of Singer (\cite{singer1999uncertainty}) in the case of wavelet transform, Wilczok (\cite{wilczok2000new}) presented the uncertainty principle for continuous wavelet and Gabor transform.\\
The main objectives of this paper are as follows:
\begin{enumerate}
\item To extend the theory of CFrWT, studied by \cite{bahri2017logarithmic},\cite{prasad2015continuous},\cite{prasad2014generalized} with scaling parameter restricted to the positive reals, to $n$-dimensional euclidean space with the scaling parameter $\boldsymbol a=(a_{1},a_{2},\ldots,a_{n})\in\mathbb{R}^n$ with $a_{i}\neq 0$ for all $i=1,2,\ldots,n.$
\item To establish the Heisenberg's uncertainty inequality and Local uncertainty inequality for CFrWT.
\item To study the properties of the CFrWT on Morrey Space. 
\end{enumerate}
This paper is organized as follows: All the results proved in this paper are in $n$ dimension. In section 2, we have given the basic definitions and properties of $n$ dimensional FrFT, which is infact the tensor product of $n$ copies of one-dimensional FrFT. We have obtained the necessary  and sufficient condition for the wavelet admissibility of a function. In section 3, we have discussed CFrWT with scaling parameter in $\mathbb{R}^n$ none of whose components are zero. Properties like inner product relation and reconstruction formula for the transform depending on two wavelets are derived along with the reproducing kernel function, involving two wavelets, for the image space of CFrWT. In section 4 we have obtained two important inequalities viz., Heisenberg's uncertainty inequality and Local uncertainty inequality for CFrWT. Finally, in section 5, we have studied the CFrWT on Morrey space $L^{1,\nu}_{M}(\mathbb{R}^n)$ along with its approximation properties.
\section{Preliminaries}
The following are the notations that we will be using throughout the paper:\\
$\mathbb{R}^n$  denotes the $n$-dimensional Euclidean space with the euclidean norm $\|\cdot\|,$ i.e., for $\boldsymbol x=(x_{1},x_{2},\ldots,x_{n})\in\mathbb{R}^n$, $$\|\boldsymbol x\|=\sqrt{\sum_{i=1}^n x^2_{i}}.$$
We define $|\boldsymbol x|_{p}=|x_{1}x_{2}\ldots x_{n}|$ and $\mathbb{R}^n_{0}=\{\boldsymbol x\in\mathbb{R}^n:|\boldsymbol x|_{p}\neq 0\}.$ For $\boldsymbol x=(x_{1},x_{2},\ldots,x_{n}),~\boldsymbol y=(y_{1},y_{2},\ldots,y_{n})\in\mathbb{R}^n,$ $\boldsymbol x+\boldsymbol y=(x_{1}+y_{1},x_{2}+y_{2},\ldots,x_{n}+y_{n}),~\boldsymbol x\boldsymbol y=(x_{1}y_{1},x_{2}y_{2},\ldots,x_{n}y_{n}).$ If further $\boldsymbol y\in\mathbb{R}^n_{0}$, then $\frac{\boldsymbol x}{\boldsymbol y}=(\frac{x_{1}}{y_{1}},\frac{x_{2}}{y_{2}},\ldots,\frac{x_{n}}{y_{n}}).$
\begin{defn}
For $1\leq p<\infty,$ $L^p(\mathbb{R}^n)$ is a Banach space of all complex valued measurable function defined on $\mathbb{R}^n$ such that
$$\int_{\mathbb{R}^n}|f(\boldsymbol t)|^pd\boldsymbol t<\infty,$$ with norm defined by
$$\|f\|_{L^p(\mathbb{R}^n)}=\left(\int_{\mathbb{R}^n}|f(\boldsymbol t)|^p d\boldsymbol t \right)^{\frac{1}{p}}.$$
In particular, $L^2(\mathbb{R}^n)$ is a Hilbert space, where the inner product inducing the norm is given by 
$$\langle f,g\rangle_{L^2(\mathbb{R}^n)}=\int_{\mathbb{R}^n}f(\boldsymbol t)\overline{g(\boldsymbol t)}d\boldsymbol t.$$ 
\end{defn}
\begin{defn}
The Schwartz class, denoted by $\mathcal{S}(\mathbb{R}^n),$ is defined as the set of all $f\in C^\infty(\mathbb{R}^n)$ such that for all multi-indices $\boldsymbol r,\boldsymbol s$
$$\sup_{\boldsymbol x\in\mathbb{R}^n}\left|{\boldsymbol x}^{\boldsymbol r}(D^{\boldsymbol s}f)(\boldsymbol x)\right|<\infty,$$
where, $\boldsymbol x^{\boldsymbol r}=x_{1}^{r_{1}}x_{2}^{r_{2}}\ldots x_{n}^{r_{n}}$ and $D^{\boldsymbol s}=\frac{\partial^{\boldsymbol s}}{\partial x_{1}^{s_{1}}\partial x_{2}^{s_{2}}\ldots \partial x_{n}^{s_{n}}},$ if $\boldsymbol x=(x_{1},x_{2},\ldots, x_{n}),\boldsymbol r=(r_{1},r_{2},\ldots,r_{n})$ and $\boldsymbol s=(s_{1},s_{2},\ldots,s_{n}).$
\end{defn}
\begin{defn}
Let $A$ be an arbitrary set and $X$ be a Hilbert space of complex valued functions defined on $A$ with the inner product $\langle\cdot,\cdot\rangle_{X}.$  Then a complex valued function $K$ defined on $A\times A$ is called a reproducing kernel of X if it satisfies the following condition:\\
For any fixed $q\in A,$ we have $K(\cdot,q)$ is in $X$ and $f(q)=\langle f(\cdot),K(\cdot,q)\rangle_{X}$ for all $f\in X$. 
\end{defn}
\subsection{Fractional Fourier Transform (FrFT) of functions in $L^1(\mathbb{R}^n)$ and its properties}
\begin{defn}\label{P1defnK}
The fractional Fourier transform (FrFT), with real parameter $\alpha,$ of $f\in L^1(\mathbb{R}^n)$ is denoted by $\mathfrak{F}_{\alpha}f$ and defined by
$$(\mathfrak{F}_\alpha f)(\boldsymbol\xi)=\int_{\mathbb{R}^n}f(\boldsymbol t)K_{\alpha}(\boldsymbol t,\boldsymbol\xi)d\boldsymbol t,\ \boldsymbol\xi\in\mathbb{R}^n$$
where
\begin{equation}
K_{\alpha}(\boldsymbol t,\boldsymbol\xi)=
\begin{cases}
C_{\alpha}e^{\frac{i}{2}(\|\boldsymbol t\|^2+\|\boldsymbol\xi\|^2)\cot\alpha-i\langle\boldsymbol t,\boldsymbol\xi\rangle\csc\alpha},~~~\mbox{if}~\alpha\neq m\pi\\
\delta(\boldsymbol t-\boldsymbol\xi),~~~~~~~~~~~~~~~~~~~~~~~~~~~~~~~~\mbox{if}~\alpha=2m\pi\\
\delta(\boldsymbol t+\boldsymbol\xi),~~~~~~~~~~~~~~~~~~~~~~~~~~~~~~~~\mbox{if}~\alpha=(2m+1)\pi,m\in\mathbb{Z},
\end{cases}
\end{equation}
$\delta$ is a Dirac delta function in $n$-dimensions and 
$$C_{\alpha}=\left(\frac{1-i\cot\alpha}{2\pi}\right)^{\frac{n}{2}}.$$
\end{defn}
Note that the FrFT reduces to the classical Fourier transform in $n$-dimensions if $\alpha=\frac{\pi}{2},$ i.e., $\mathfrak{F}_{\frac{\pi}{2}}=\mathfrak{F}.$\\
We mention below some properties of the kernel that will be used shortly,
\begin{description}
\item[(K1)] $K_{\alpha}(\boldsymbol t,\boldsymbol\xi)=K_{\alpha}(\boldsymbol\xi,\boldsymbol t),$
\item[(K2)] $\overline{K_{\alpha}(\boldsymbol t,\boldsymbol\xi)}=K_{-\alpha}(\boldsymbol t,\boldsymbol\xi),$
\item[(K3)] $\displaystyle\int_{\mathbb{R}^n}K_{\alpha}(\boldsymbol t,\boldsymbol\xi)K_{\beta}(\boldsymbol\xi,\boldsymbol u)d\boldsymbol\xi=K_{\alpha+\beta}(\boldsymbol t,\boldsymbol u),$
\item[(K4)] $K_{\alpha}$ is continuous for all $\alpha\neq m\pi,$ and in the generalized function sense it is even continuous for $\alpha=m\pi,~m\in\mathbb{Z}.$
\end{description}
\begin{prop}\label{P1prop1}
If  $f\in L^1(\mathbb{R}^n)$, then $$(\mathfrak{F}_\alpha f)(\boldsymbol\xi)=(2\pi)^\frac{n}{2}C_\alpha e^{\frac{i}{2}\|\boldsymbol\xi\|^2\cot\alpha}\left(\mathfrak{F}\left\{e^{\frac{i}{2}\|\boldsymbol t\|^2\cot\alpha}f(\boldsymbol t)\right\}\right)\left(\frac{\boldsymbol\xi}{\sin\alpha}\right),\boldsymbol\xi\in\mathbb{R}^n.$$
\end{prop}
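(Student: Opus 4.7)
The plan is to derive the identity directly from the definition of the FrFT by a purely algebraic rearrangement of the kernel, isolating the factors that do not depend on the variable of integration and then recognizing what is left as an ordinary Fourier transform evaluated at a rescaled argument.

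First, I would restrict to $\alpha\neq m\pi$ so that $\cot\alpha$ and $\csc\alpha$ are finite (this is forced by the presence of these symbols on the right-hand side). Starting from Definition \ref{P1defnK}, I would write
\begin{equation*}
(\mathfrak{F}_\alpha f)(\boldsymbol\xi)=\int_{\mathbb{R}^n}f(\boldsymbol t)\,C_\alpha\,e^{\frac{i}{2}(\|\boldsymbol t\|^2+\|\boldsymbol\xi\|^2)\cot\alpha-i\langle\boldsymbol t,\boldsymbol\xi\rangle\csc\alpha}\,d\boldsymbol t .
\end{equation*}
The factor $C_\alpha e^{\frac{i}{2}\|\boldsymbol\xi\|^2\cot\alpha}$ is independent of $\boldsymbol t$ and can be pulled out of the integral. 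Using $\csc\alpha=1/\sin\alpha$, the remaining exponent factors as $\frac{i}{2}\|\boldsymbol t\|^2\cot\alpha - i\langle\boldsymbol t,\boldsymbol\xi/\sin\alpha\rangle$, so that
\begin{equation*}
(\mathfrak{F}_\alpha f)(\boldsymbol\xi)=C_\alpha\,e^{\frac{i}{2}\|\boldsymbol\xi\|^2\cot\alpha}\int_{\mathbb{R}^n}\bigl[e^{\frac{i}{2}\|\boldsymbol t\|^2\cot\alpha}f(\boldsymbol t)\bigr]\,e^{-i\langle\boldsymbol t,\boldsymbol\xi/\sin\alpha\rangle}\,d\boldsymbol t.
\end{equation*}

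Next, I would note that the function $\boldsymbol t\mapsto e^{\frac{i}{2}\|\boldsymbol t\|^2\cot\alpha}f(\boldsymbol t)$ lies in $L^1(\mathbb{R}^n)$, because the chirp factor has modulus one, so its classical Fourier transform is well defined pointwise and the remaining integral is unambiguous. With the convention $(\mathfrak{F}g)(\boldsymbol\eta)=(2\pi)^{-n/2}\int_{\mathbb{R}^n}g(\boldsymbol t)e^{-i\langle\boldsymbol t,\boldsymbol\eta\rangle}d\boldsymbol t$ (which is the one compatible with $\mathfrak{F}_{\pi/2}=\mathfrak{F}$ given the prefactor $C_{\pi/2}=(2\pi)^{-n/2}$), the last integral equals
\begin{equation*}
(2\pi)^{n/2}\,\bigl(\mathfrak{F}\{e^{\frac{i}{2}\|\boldsymbol t\|^2\cot\alpha}f(\boldsymbol t)\}\bigr)\!\left(\tfrac{\boldsymbol\xi}{\sin\alpha}\right),
\end{equation*}
which yields the claimed formula upon substitution.

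There is no real obstacle here beyond keeping the constants and conventions straight; the statement is essentially a rewriting of the FrFT kernel as a chirp times a dilation of the ordinary Fourier kernel. The only point requiring a small sanity check is the consistency of the normalisation, which is pinned down by specialising to $\alpha=\pi/2$ and verifying that the identity collapses to $\mathfrak{F}_{\pi/2}=\mathfrak{F}$.
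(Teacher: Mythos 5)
Your proposal is correct and follows essentially the same route as the paper's own proof: factor the kernel into the $\boldsymbol t$-independent chirp $C_\alpha e^{\frac{i}{2}\|\boldsymbol\xi\|^2\cot\alpha}$ and the remaining integrand $e^{\frac{i}{2}\|\boldsymbol t\|^2\cot\alpha}f(\boldsymbol t)e^{-i\langle\boldsymbol t,\boldsymbol\xi/\sin\alpha\rangle}$, then recognize the latter as $(2\pi)^{n/2}$ times the classical Fourier transform at $\boldsymbol\xi/\sin\alpha$. Your added remarks on the $L^1$ integrability of the chirped function and the normalization check at $\alpha=\pi/2$ are minor refinements the paper leaves implicit.
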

\begin{proof}
Let $\boldsymbol\xi\in\mathbb{R}^n$, then from the definition, we have
\begin{eqnarray*}
(\mathfrak{F}_{\alpha}f)(\boldsymbol\xi)&=&\int_{\mathbb{R}^n}C_{\alpha}e^{\frac{i}{2}\left(\|\boldsymbol t\|^2+\|\boldsymbol\xi\|^2\right)\cot\alpha-i\left\langle \boldsymbol t,\frac{\boldsymbol\xi}{\sin\alpha}\right\rangle}f(\boldsymbol t)d\boldsymbol t\\
&=&(2\pi)^{\frac{n}{2}} C_{\alpha}e^{\frac{i}{2}\|\boldsymbol\xi\|^2\cot\alpha}\frac{1}{(2\pi)^{\frac{n}{2}}}\int_{\mathbb{R}^n}\left\{e^{\frac{i}{2}\|\boldsymbol t\|^2\cot\alpha}f(\boldsymbol t)\right\}e^{-i\left\langle \boldsymbol t,\frac{\boldsymbol\xi}{\sin\alpha}\right\rangle}d\boldsymbol t
\end{eqnarray*}
Thus, the proposition follows.
\end{proof}
\begin{prop}\label{P1prop2}
If $f\in L^{1}(\mathbb{R}^n)\cap L^{2}(\mathbb{R}^n)$, then $\mathfrak{F_\alpha}f\in L^{2}(\mathbb{R}^n)$.
\end{prop}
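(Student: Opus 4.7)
The plan is to reduce the claim to the classical Plancherel theorem via Proposition \ref{P1prop1}. First, I would dispose of the degenerate cases $\alpha = m\pi$ separately, since there the kernel is a translated Dirac delta and $(\mathfrak{F}_\alpha f)(\boldsymbol\xi)$ equals $f(\boldsymbol\xi)$ or $f(-\boldsymbol\xi)$, either of which lies in $L^2(\mathbb{R}^n)$ by hypothesis.

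For the generic case $\alpha \neq m\pi$, I would set $g(\boldsymbol t) := e^{\frac{i}{2}\|\boldsymbol t\|^2 \cot\alpha} f(\boldsymbol t)$. Since the modulating exponential has unit modulus, $g$ inherits membership in $L^1(\mathbb{R}^n) \cap L^2(\mathbb{R}^n)$ from $f$, with $\|g\|_{L^2(\mathbb{R}^n)} = \|f\|_{L^2(\mathbb{R}^n)}$. This dual integrability is exactly what is needed to ensure that the integral defining $\mathfrak{F} g$ (as a classical Fourier transform) converges pointwise and simultaneously represents the $L^2$-Fourier transform of $g$, so that the Plancherel identity $\|\mathfrak{F} g\|_{L^2(\mathbb{R}^n)} = \|g\|_{L^2(\mathbb{R}^n)}$ applies.

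Next, I would take absolute values in Proposition \ref{P1prop1} to obtain
$$|(\mathfrak{F}_\alpha f)(\boldsymbol\xi)| = (2\pi)^{n/2} |C_\alpha|\, \bigl|(\mathfrak{F} g)\bigl(\tfrac{\boldsymbol\xi}{\sin\alpha}\bigr)\bigr|,$$
square and integrate over $\mathbb{R}^n$, and make the linear change of variables $\boldsymbol\eta = \boldsymbol\xi/\sin\alpha$, whose Jacobian contributes $|\sin\alpha|^n$. This yields
$$\|\mathfrak{F}_\alpha f\|_{L^2(\mathbb{R}^n)}^2 = (2\pi)^n |C_\alpha|^2\, |\sin\alpha|^n\, \|\mathfrak{F} g\|_{L^2(\mathbb{R}^n)}^2.$$
Using $|C_\alpha|^2 = \bigl(\tfrac{|1-i\cot\alpha|}{2\pi}\bigr)^n = \bigl(\tfrac{1}{2\pi|\sin\alpha|}\bigr)^n$, the prefactor collapses to $1$, and combining with Plancherel gives $\|\mathfrak{F}_\alpha f\|_{L^2(\mathbb{R}^n)} = \|f\|_{L^2(\mathbb{R}^n)} < \infty$.

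There is no genuine obstacle here; the only care point is keeping the constants straight, namely verifying that the product $(2\pi)^n |C_\alpha|^2 |\sin\alpha|^n$ equals $1$ so that the identity reproduces a Plancherel-type equality for $\mathfrak{F}_\alpha$ (which is a bonus byproduct of the argument beyond what the statement asks).
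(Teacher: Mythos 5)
Your proof is correct and follows the same route the paper intends: the paper's own proof is just the one-line remark that the claim ``follows from Proposition \ref{P1prop1},'' and your argument is precisely the worked-out version of that reduction (Plancherel for the classical Fourier transform applied to $e^{\frac{i}{2}\|\boldsymbol t\|^2\cot\alpha}f(\boldsymbol t)$, plus the change of variables $\boldsymbol\eta=\boldsymbol\xi/\sin\alpha$, with the constants $(2\pi)^n|C_\alpha|^2|\sin\alpha|^n=1$ checked correctly). The explicit treatment of the degenerate cases $\alpha=m\pi$ and the resulting Parseval identity are welcome extras, consistent with Theorem \ref{P1theo1.1} later in the paper.
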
          
\begin{proof}
This follows from proposition \ref{P1prop1}.
\end{proof}                                         
\begin{prop}\label{P1prop3}
If $f\in\mathcal{S}(\mathbb{R}^n)$, then ${\mathfrak{F}}_\alpha f\in\mathcal{S}(\mathbb{R}^n)$.
\end{prop}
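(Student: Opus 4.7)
The plan is to reduce the claim to the well-known fact that the classical Fourier transform is an isomorphism of $\mathcal{S}(\mathbb{R}^n)$, using Proposition \ref{P1prop1} to express $\mathfrak{F}_\alpha f$ in terms of $\mathfrak{F}$.

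First, I would split into cases according to the definition of $K_\alpha$. If $\alpha = 2m\pi$ then $(\mathfrak{F}_\alpha f)(\boldsymbol\xi) = f(\boldsymbol\xi)$ and if $\alpha = (2m+1)\pi$ then $(\mathfrak{F}_\alpha f)(\boldsymbol\xi) = f(-\boldsymbol\xi)$; in both sub-cases membership in $\mathcal{S}(\mathbb{R}^n)$ is immediate since Schwartz class is preserved under the reflection $\boldsymbol\xi \mapsto -\boldsymbol\xi$.

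For the main case $\alpha \neq m\pi$, I would apply Proposition \ref{P1prop1} and argue that $\mathfrak{F}_\alpha$ is the composition of four operations, each of which maps $\mathcal{S}(\mathbb{R}^n)$ into itself:
\begin{enumerate}
\item multiplication by the chirp $e^{\frac{i}{2}\|\boldsymbol t\|^2\cot\alpha}$;
\item the classical $n$-dimensional Fourier transform $\mathfrak{F}$;
\item the dilation $\boldsymbol\xi \mapsto \boldsymbol\xi/\sin\alpha$ (with $\sin\alpha \neq 0$);
\item multiplication by the chirp $(2\pi)^{n/2}C_\alpha\, e^{\frac{i}{2}\|\boldsymbol\xi\|^2\cot\alpha}$.
\end{enumerate}
For the chirp multiplications, the crucial observation is that $|e^{\frac{i}{2}\|\boldsymbol t\|^2\cot\alpha}| = 1$ and all partial derivatives of $e^{\frac{i}{2}\|\boldsymbol t\|^2\cot\alpha}$ are of the form $P(\boldsymbol t)\, e^{\frac{i}{2}\|\boldsymbol t\|^2\cot\alpha}$ with $P$ a polynomial. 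Hence, by the Leibniz rule, $\boldsymbol x^{\boldsymbol r} D^{\boldsymbol s}\bigl(e^{\frac{i}{2}\|\boldsymbol t\|^2\cot\alpha} f(\boldsymbol t)\bigr)$ is a finite sum of terms of the form (polynomial)$\cdot$(derivative of $f$)$\cdot$(chirp), each of which is bounded on $\mathbb{R}^n$ because $f \in \mathcal{S}(\mathbb{R}^n)$. Preservation of $\mathcal{S}(\mathbb{R}^n)$ by $\mathfrak{F}$ and by non-zero dilations are standard.

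The only mildly delicate point is verifying carefully that multiplication by the chirp preserves Schwartz class, since the chirp itself is not Schwartz; the Leibniz-rule bookkeeping sketched above settles this. Once these four stability properties are in place, the proposition follows by chaining them through the identity of Proposition \ref{P1prop1}.
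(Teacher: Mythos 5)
Your proposal is correct and follows exactly the route the paper intends: the paper's entire proof is the one-line remark that the claim ``follows immediately from Proposition \ref{P1prop1},'' and your argument is precisely that reduction carried out in detail (chirp multiplication, classical Fourier transform, nonzero dilation, and the final chirp/constant factor each preserving $\mathcal{S}(\mathbb{R}^n)$, plus the trivial $\alpha=m\pi$ cases). The Leibniz-rule justification that chirp multiplication preserves the Schwartz class is the right way to fill the gap the paper leaves implicit.
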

\begin{proof}
This follows immediately from proposition \ref{P1prop1}.
\end{proof}
\begin{prop}\label{P1prop4}
If $f\in\mathcal{S}(\mathbb{R}^n)$, then $\mathfrak{F}_\alpha(\mathfrak{F}_{-\alpha}f)\in\mathcal{S}(\mathbb{R}^n)$.
\end{prop}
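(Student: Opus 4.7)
The plan is essentially to iterate Proposition \ref{P1prop3}. By that proposition, applied with parameter $-\alpha$ in place of $\alpha$ (the statement is symmetric in the sign of the real parameter, since the definition of $K_\alpha$ makes sense for any real $\alpha\neq m\pi$, and the degenerate $\alpha=m\pi$ case is a Dirac delta that trivially sends Schwartz to Schwartz), the function $g:=\mathfrak{F}_{-\alpha}f$ lies in $\mathcal{S}(\mathbb{R}^n)$ whenever $f\in\mathcal{S}(\mathbb{R}^n)$. Then a second application of Proposition \ref{P1prop3} to $g\in\mathcal{S}(\mathbb{R}^n)$ yields $\mathfrak{F}_\alpha g=\mathfrak{F}_\alpha(\mathfrak{F}_{-\alpha}f)\in\mathcal{S}(\mathbb{R}^n)$, which is the desired conclusion.

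If one wishes to be more explicit, one can unwind Proposition \ref{P1prop1} twice. Concretely, first write
\[
(\mathfrak{F}_{-\alpha}f)(\boldsymbol\xi)=(2\pi)^{\frac{n}{2}}C_{-\alpha}\,e^{-\frac{i}{2}\|\boldsymbol\xi\|^2\cot\alpha}\bigl(\mathfrak{F}\{e^{-\frac{i}{2}\|\boldsymbol t\|^2\cot\alpha}f(\boldsymbol t)\}\bigr)\!\left(\tfrac{-\boldsymbol\xi}{\sin\alpha}\right).
\]
Since multiplication by the smooth polynomially bounded chirp $e^{-\frac{i}{2}\|\boldsymbol t\|^2\cot\alpha}$ preserves $\mathcal{S}(\mathbb{R}^n)$, and the classical Fourier transform $\mathfrak{F}$ preserves $\mathcal{S}(\mathbb{R}^n)$, and pre-composition with the linear change of variable $\boldsymbol\xi\mapsto -\boldsymbol\xi/\sin\alpha$ preserves $\mathcal{S}(\mathbb{R}^n)$, one concludes $\mathfrak{F}_{-\alpha}f\in\mathcal{S}(\mathbb{R}^n)$. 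Repeating the same argument with $\alpha$ in place of $-\alpha$ and $g=\mathfrak{F}_{-\alpha}f$ in place of $f$ gives $\mathfrak{F}_\alpha g\in\mathcal{S}(\mathbb{R}^n)$.

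There is no real obstacle here: the only thing to watch is the edge case $\alpha=m\pi$, where the kernel is a delta and $\mathfrak{F}_\alpha$ acts as $f(\boldsymbol t)\mapsto f(\pm\boldsymbol t)$, which obviously preserves $\mathcal{S}(\mathbb{R}^n)$, so the composition $\mathfrak{F}_\alpha\mathfrak{F}_{-\alpha}$ still preserves Schwartz in the degenerate case. Thus the statement is an immediate corollary of Proposition \ref{P1prop3}.
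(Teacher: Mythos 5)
Your argument is correct and is exactly the paper's route: the paper also dispatches this by applying Proposition \ref{P1prop3} (first with parameter $-\alpha$, then with $\alpha$). Your extra unwinding of Proposition \ref{P1prop1} and the remark on the degenerate case $\alpha=m\pi$ only make explicit what the paper leaves implicit.
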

\begin{proof}
It can be proved using proposition \ref{P1prop3}.
\end{proof}
\begin{prop}\label{P1prop5}
If $f\in L^1(\mathbb{R}^n)$, then $\overline{\mathfrak{F}_\alpha f}=\mathfrak{F}_{-\alpha}\bar{f},$ where bar denotes the complex conjugate.
\end{prop}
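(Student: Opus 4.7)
The plan is to unwind the definition of $\mathfrak{F}_{\alpha}f$, push the complex conjugate inside the integral, and invoke property (K2) of the kernel to identify the result as $\mathfrak{F}_{-\alpha}\bar f$.

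Concretely, first I would fix $\boldsymbol\xi\in\mathbb{R}^n$ and write
\[
\overline{(\mathfrak{F}_\alpha f)(\boldsymbol\xi)}=\overline{\int_{\mathbb{R}^n}f(\boldsymbol t)K_{\alpha}(\boldsymbol t,\boldsymbol\xi)\,d\boldsymbol t}.
\]
Since $f\in L^1(\mathbb{R}^n)$ and $|K_\alpha(\boldsymbol t,\boldsymbol\xi)|=|C_\alpha|$ is a constant in $\boldsymbol t$ (for $\alpha\neq m\pi$), the integrand is absolutely integrable, so complex conjugation commutes with the integral and gives
\[
\overline{(\mathfrak{F}_\alpha f)(\boldsymbol\xi)}=\int_{\mathbb{R}^n}\overline{f(\boldsymbol t)}\;\overline{K_{\alpha}(\boldsymbol t,\boldsymbol\xi)}\,d\boldsymbol t.
\]
Then I would apply property (K2), $\overline{K_{\alpha}(\boldsymbol t,\boldsymbol\xi)}=K_{-\alpha}(\boldsymbol t,\boldsymbol\xi)$, to rewrite the right-hand side as $\int_{\mathbb{R}^n}\bar f(\boldsymbol t)K_{-\alpha}(\boldsymbol t,\boldsymbol\xi)\,d\boldsymbol t$, which is exactly $(\mathfrak{F}_{-\alpha}\bar f)(\boldsymbol\xi)$ by definition.

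Finally, I would briefly address the degenerate cases $\alpha=m\pi$. There $K_\alpha$ is a Dirac delta ($\delta(\boldsymbol t-\boldsymbol\xi)$ or $\delta(\boldsymbol t+\boldsymbol\xi)$), which is real and independent of $\alpha$, so $K_\alpha=K_{-\alpha}$ trivially, and the identity reduces to $\overline{f(\pm\boldsymbol\xi)}=\bar f(\pm\boldsymbol\xi)$, again in the distributional sense. There is no real obstacle here; the only minor care point is justifying the interchange of conjugation and integration, which is immediate from $f\in L^1(\mathbb{R}^n)$ together with the boundedness of $K_\alpha$ in $\boldsymbol t$.
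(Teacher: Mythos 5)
Your argument is correct and is exactly the paper's approach: the paper's proof simply cites property (K2), and you have written out the same computation in full, including the routine justification for moving the conjugate inside the integral. No issues.
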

\begin{proof}
This follows from the property (K2) of the kernel $K_{\alpha}(\boldsymbol t,\boldsymbol\xi)$ as given in the definition (\ref{P1defnK}).
\end{proof}
\begin{theorem}\label{P1theo1.3}
If $f\in L^1(\mathbb{R}^n)$, then $\mathfrak{F}_\alpha f\in L^\infty(\mathbb{R}^n)$. Moreover, $\mathfrak{F}_\alpha f$ is continuous (hence measurable) on $\mathbb{R}^n$. 
\end{theorem}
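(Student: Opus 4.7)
The strategy is to reduce everything to known properties of the classical Fourier transform on $L^{1}(\mathbb{R}^n)$ via Proposition \ref{P1prop1}. In what follows I will assume $\alpha\neq m\pi$, since for $\alpha=m\pi$ the transform acts by $f\mapsto f$ or $f\mapsto f(-\cdot)$ and the statement is either trivial or to be understood in the distributional sense.

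The first step is to apply Proposition \ref{P1prop1} and observe that the factor $e^{\frac{i}{2}\|\boldsymbol t\|^{2}\cot\alpha}$ has modulus one, so $g(\boldsymbol t)=e^{\frac{i}{2}\|\boldsymbol t\|^{2}\cot\alpha}f(\boldsymbol t)$ lies in $L^{1}(\mathbb{R}^n)$ with $\|g\|_{L^{1}(\mathbb{R}^n)}=\|f\|_{L^{1}(\mathbb{R}^n)}$. It is a classical fact that for any $g\in L^{1}(\mathbb{R}^n)$ the classical Fourier transform $\mathfrak{F}g$ satisfies $\|\mathfrak{F}g\|_{L^{\infty}(\mathbb{R}^n)}\leq (2\pi)^{-n/2}\|g\|_{L^{1}(\mathbb{R}^n)}$, obtained immediately from $|e^{-i\langle\boldsymbol t,\boldsymbol\omega\rangle}|=1$ together with the triangle inequality for the integral.

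Combining this with Proposition \ref{P1prop1} and using $|e^{\frac{i}{2}\|\boldsymbol\xi\|^{2}\cot\alpha}|=1$ gives
\begin{equation*}
|(\mathfrak{F}_{\alpha}f)(\boldsymbol\xi)|\leq (2\pi)^{\frac{n}{2}}|C_{\alpha}|\cdot (2\pi)^{-\frac{n}{2}}\|f\|_{L^{1}(\mathbb{R}^n)}=|C_{\alpha}|\,\|f\|_{L^{1}(\mathbb{R}^n)}
\end{equation*}
for every $\boldsymbol\xi\in\mathbb{R}^n$, which is the $L^{\infty}$ bound.

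For continuity I would invoke the standard dominated convergence argument: if $\boldsymbol\omega_{k}\to\boldsymbol\omega_{0}$ in $\mathbb{R}^n$, then for each $\boldsymbol t$ the integrand $g(\boldsymbol t)e^{-i\langle\boldsymbol t,\boldsymbol\omega_{k}\rangle}$ converges pointwise to $g(\boldsymbol t)e^{-i\langle\boldsymbol t,\boldsymbol\omega_{0}\rangle}$ and is dominated by the integrable function $|g(\boldsymbol t)|$, so $\mathfrak{F}g$ is continuous on $\mathbb{R}^n$. Then $\boldsymbol\xi\mapsto(\mathfrak{F}g)(\boldsymbol\xi/\sin\alpha)$ is continuous, and multiplication by the continuous factor $(2\pi)^{n/2}C_{\alpha}e^{\frac{i}{2}\|\boldsymbol\xi\|^{2}\cot\alpha}$ preserves continuity; measurability follows since every continuous function on $\mathbb{R}^n$ is Borel measurable. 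There is no real obstacle here beyond being careful about the case distinction in the definition of the kernel; the whole argument is essentially a transfer of two elementary properties of $\mathfrak{F}$ through the identity of Proposition \ref{P1prop1}.
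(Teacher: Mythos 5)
Your proof is correct and, once Proposition \ref{P1prop1} is unfolded, it is essentially the argument the paper gives: the $L^\infty$ bound comes from the oscillatory factors having modulus one together with the triangle inequality, and continuity comes from dominated convergence. The only cosmetic difference is that the paper estimates $|K_\alpha(\boldsymbol t,\boldsymbol\xi)|=|C_\alpha|$ and applies dominated convergence directly to $f(\boldsymbol t)K_\alpha(\boldsymbol t,\boldsymbol\xi_k)$, whereas you route both steps through the classical Fourier transform of the chirped function $e^{\frac{i}{2}\|\boldsymbol t\|^2\cot\alpha}f(\boldsymbol t)$; both yield the same constant $|C_\alpha|\,\|f\|_{L^1(\mathbb{R}^n)}$.
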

\begin{proof}
For $\boldsymbol\xi\in\mathbb{R}^n,$ we have
$$|(\mathfrak{F}_\alpha f)(\boldsymbol\xi)|\leq \int_{\mathbb{R}^n}|K_\alpha(\boldsymbol t,\boldsymbol\xi)f(\boldsymbol t)|d\boldsymbol t\leq |C_\alpha|\int_{\mathbb{R}^n} |f(\boldsymbol t)|d\boldsymbol t=|C_\alpha|\|f\|_{L^1(\mathbb{R}^n)}.$$
Thus, it follows that $\mathfrak{F}_{\alpha}f\in L^{\infty}(\mathbb{R}^n)$.\\
Let $\boldsymbol \xi\in {R}^n$ and $\{{\boldsymbol \xi}_k\}$ be a sequence in $\mathbb{R}^n$ converging to $\boldsymbol\xi$. Then
\begin{equation}\label{P1eqnA} 
 \lim_{k\to\infty}f(\boldsymbol t)K_\alpha(\boldsymbol t,{\boldsymbol\xi}_k)=f(\boldsymbol t)K_\alpha(\boldsymbol t,\boldsymbol\xi).
 \end{equation}
Using equation (\ref{P1eqnA}) and the dominated convergence theorem, we have
 $$\lim_{k\to\infty}(\mathfrak{F}_\alpha f)({\boldsymbol\xi}_k)=(\mathfrak{F}_\alpha f)(\boldsymbol\xi).$$
So, $f$ is continuous on $\mathbb{R}^n$.
\end{proof}
\begin{theorem}\label{P1theo1.1}
(Paresval's Theorem) Let $f,g\in L^{1}(\mathbb{R}^n)\cap L^{2}(\mathbb{R}^n)$, then ${\langle f,g\rangle}_{L^{2}(\mathbb{R}^n)}={\langle\mathfrak{F_\alpha}f,\mathfrak{F_\alpha}g \rangle}_{L^{2}(\mathbb{R}^n)}$. Particularly, $\|f\|_{L^{2}(\mathbb{R}^n)}=\|\mathfrak{F_\alpha}f\|_{L^{2}(\mathbb{R}^n)},$ for all $f\in L^{1}(\mathbb{R}^n)\cap L^{2}(\mathbb{R}^n)$.
\end{theorem}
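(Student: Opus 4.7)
The plan is to reduce the statement to the classical Parseval identity for the ordinary Fourier transform $\mathfrak{F}$ via Proposition \ref{P1prop1}, which already expresses $\mathfrak{F}_\alpha$ as a chirp multiplication, a Fourier transform, a dilation, and another chirp multiplication. For $\alpha=m\pi$ the kernel is a Dirac mass and $\mathfrak{F}_\alpha f$ equals $f$ or its reflection, so Parseval is immediate; the substantive case is $\alpha\neq m\pi$, which I treat below.

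First I introduce the auxiliary functions $F(\boldsymbol t)=e^{\frac{i}{2}\|\boldsymbol t\|^2\cot\alpha}f(\boldsymbol t)$ and $G(\boldsymbol t)=e^{\frac{i}{2}\|\boldsymbol t\|^2\cot\alpha}g(\boldsymbol t)$. Since the chirp has modulus one, $F,G\in L^1(\mathbb{R}^n)\cap L^2(\mathbb{R}^n)$, so the classical Parseval/Plancherel identity
\[
\int_{\mathbb{R}^n} (\mathfrak{F}F)(\boldsymbol\eta)\,\overline{(\mathfrak{F}G)(\boldsymbol\eta)}\,d\boldsymbol\eta=\int_{\mathbb{R}^n} F(\boldsymbol t)\overline{G(\boldsymbol t)}\,d\boldsymbol t
\]
is available. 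Using Proposition \ref{P1prop1}, I write
\[
(\mathfrak{F}_\alpha f)(\boldsymbol\xi)\,\overline{(\mathfrak{F}_\alpha g)(\boldsymbol\xi)}=(2\pi)^n|C_\alpha|^2\,(\mathfrak{F}F)\!\Bigl(\tfrac{\boldsymbol\xi}{\sin\alpha}\Bigr)\overline{(\mathfrak{F}G)\!\Bigl(\tfrac{\boldsymbol\xi}{\sin\alpha}\Bigr)},
\]
since the chirp factors $e^{\pm\frac{i}{2}\|\boldsymbol\xi\|^2\cot\alpha}$ cancel. Integrating over $\boldsymbol\xi$ and substituting $\boldsymbol\eta=\boldsymbol\xi/\sin\alpha$ (with $d\boldsymbol\xi=|\sin\alpha|^n\,d\boldsymbol\eta$) produces the factor $(2\pi)^n|C_\alpha|^2|\sin\alpha|^n$. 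A direct computation from $C_\alpha=\bigl(\tfrac{1-i\cot\alpha}{2\pi}\bigr)^{n/2}$ gives $|C_\alpha|^2=|\csc\alpha|^n/(2\pi)^n$, so this constant collapses to $1$, leaving precisely the classical Parseval integral for $F,G$. Because $|F|=|f|$ and $|G|=|g|$ and the chirps in $F\overline{G}$ cancel, $\int F\overline{G}=\int f\overline{g}$, which is the desired identity.

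The polarization of this identity at $f=g$ immediately yields the Plancherel equality $\|f\|_{L^2}=\|\mathfrak{F}_\alpha f\|_{L^2}$, so no separate argument is needed. The only technical point I anticipate is the careful bookkeeping of the constant $(2\pi)^n|C_\alpha|^2|\sin\alpha|^n$, in particular handling $|\csc\alpha|$ versus $\csc\alpha$ for $\alpha\in(\pi,2\pi)$; this is a purely algebraic verification using $|1-i\cot\alpha|^2=\csc^2\alpha$. Everything else is a routine chaining of Proposition \ref{P1prop1}, a change of variables, and classical Plancherel.
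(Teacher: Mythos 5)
Your proposal is correct and follows essentially the same route as the paper: apply Proposition \ref{P1prop1}, cancel the chirp factors, absorb the constant $(2\pi)^n|C_\alpha|^2|\sin\alpha|^n=1$ via the substitution $\boldsymbol\eta=\boldsymbol\xi/\sin\alpha$, and invoke classical Parseval for the chirped functions. In fact you are slightly more careful than the paper, which suppresses the change of variables and the constant bookkeeping; the only nitpick is that deducing the norm identity by setting $f=g$ is specialization rather than ``polarization.''
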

\begin{proof}
We have,
\begin{eqnarray*}
{\langle\mathfrak{F_\alpha}f,\mathfrak{F_\alpha}g \rangle}_{L^{2}(\mathbb{R}^n)}&=
&\int_{\mathbb{R}^n} (\mathfrak{F_\alpha}f)(\boldsymbol{\xi})\overline{(\mathfrak{F_\alpha}g)(\boldsymbol{\xi})}d\boldsymbol{\xi}\\
&=&\int_{\mathbb{R}^n}(2\pi)^{n}C_{\alpha}e^{\frac{i}{2}{\|\boldsymbol{\xi}\|}^2\cot\alpha}\left(\mathfrak{F}\left\{e^{\frac{i}{2}{\|\cdot\|}^2\cot\alpha}f(\cdot)\right\}\right)\left(\frac{\boldsymbol{\xi}}{\sin\alpha}\right)\overline{C_{\alpha}}e^{-\frac{i}{2}{\|\boldsymbol{\xi}\|}^2\cot\alpha}\overline{\left(\mathfrak{F}\left\{e^{\frac{i}{2}{\|\cdot\|}^2\cot\alpha}g(\cdot)\right\}\right)\left(\frac{\boldsymbol{\xi}}{\sin\alpha}\right)}d\boldsymbol{\xi}\\
&=&(2\pi)^{n}\int_{\mathbb{R}^n}C_{\alpha}\overline{C_{\alpha}}\left(\mathfrak{F}\left\{e^{\frac{i}{2}{\|\cdot\|}^2\cot\alpha}f(\cdot)\right\}\right)\left(\frac{\boldsymbol{\xi}}{\sin\alpha}\right)\overline{\left(\mathfrak{F}\left\{e^{\frac{i}{2}{\|\cdot\|}^2\cot\alpha}g(\cdot)\right\}\right)\left(\frac{\boldsymbol{\xi}}{\sin\alpha}\right)}d\boldsymbol{\xi}\\
&=&(2\pi)^{n}|C_{\alpha}|^2\int_{\mathbb{R}^n}\left(\mathfrak{F}\left\{e^{\frac{i}{2}{\|\cdot\|}^2\cot\alpha}f(\cdot)\right\}\right)\left(\frac{\boldsymbol{\xi}}{\sin\alpha}\right)\overline{\left(\mathfrak{F}\left\{e^{\frac{i}{2}{\|\cdot\|}^2\cot\alpha}g(\cdot)\right\}\right)\left(\frac{\boldsymbol{\xi}}{\sin\alpha}\right)}d\boldsymbol{\xi}\\
&=&{\left\langle\mathfrak{F}\left\{e^{\frac{i}{2}{\|\cdot\|}^2\cot\alpha}f(\cdot)\right\},\mathfrak{F}\left\{e^{\frac{i}{2}{\|\cdot\|}^2\cot\alpha}g(\cdot)\right\}\right\rangle}_{L^{2}(\mathbb{R}^n)}.
\end{eqnarray*} 
Using the Parseval's formula for classical Fourier transform, we have 
\begin{eqnarray}\label{P1eqn2}
{\langle\mathfrak{F_\alpha}f,\mathfrak{F_\alpha}g \rangle}_{L^{2}(\mathbb{R}^n)}&=&{\left\langle e^{\frac{i}{2}{\|\cdot\|}^2\cot\alpha}f(\cdot),e^{\frac{i}{2}{\|\cdot\|}^2\cot\alpha}g(\cdot)\right\rangle}_{L^{2}(\mathbb{R}^n)}\notag\\
&=&{\langle f,g\rangle}_{L^{2}(\mathbb{R}^n)}.
\end{eqnarray}
Replacing $g$ by $f$ in equation (\ref{P1eqn2}), we obtain $\|f\|_{L^{2}(\mathbb{R}^n)}=\|\mathfrak{F_\alpha}f\|_{L^{2}(\mathbb{R}^n)}$. This completes the proof.
\end{proof}
\begin{defn}
For a complex valued measurable function $f$ on $\mathbb{R}^n,$ we define the following operators for some $
\boldsymbol\eta$ and ${\textbf{\em a}}$ in $\mathbb{R}^n$:
\begin{enumerate}
\item $(T_{\boldsymbol{\eta},\alpha}f)(\boldsymbol{t})=f(\boldsymbol{t}+\boldsymbol{\eta})e^{i\langle\boldsymbol{t},\boldsymbol{\eta}\rangle\cot{\alpha}},~\boldsymbol t\in\mathbb{R}^n,$
\item $(M_{\boldsymbol{\eta},\alpha}f)(\boldsymbol{t})=e^{i\langle\boldsymbol{t},\boldsymbol{\eta}\rangle\csc{\alpha}+\frac{i}{2}\|\boldsymbol\eta\|^2\cot\alpha} f(\boldsymbol{t}),~\boldsymbol t\in\mathbb{R}^n,$
\item $(D_{\textbf{\em a}}f)(\boldsymbol{t})=f({\textbf{\em a}}\boldsymbol t),~\boldsymbol t\in\mathbb{R}^n.$
\end{enumerate}
\end{defn}
\begin{prop}
If $f\in L^1(\mathbb{R}^n),$ then
\begin{enumerate}
\item  $(\mathfrak{F}_\alpha\{T_{\boldsymbol{\eta},\alpha}f\})(\boldsymbol{\xi})=(M_{\boldsymbol{-\eta},-\alpha}\{\mathfrak{F}_\alpha f\})(\boldsymbol{\xi}),~\boldsymbol\xi\in\mathbb{R}^n,$
\item $(\mathfrak{F}_\alpha\{M_{\boldsymbol{\eta},\alpha}f\})(\boldsymbol{\xi})=(T_{\boldsymbol{-\eta},-\alpha}\{\mathfrak{F}_\alpha f\})(\boldsymbol{\xi}),~\boldsymbol\xi\in\mathbb{R}^n,$
\item $(\mathfrak{F}_\alpha\{D_{\textbf{\em a}}f\})(\boldsymbol{\xi})=(\mathfrak{F}_\alpha f)({\textbf{\em a}}\boldsymbol \xi)$, where $\boldsymbol\xi\in\mathbb{R}^n,~ {\textbf{\em a}}=(a_{1},a_{2},...,a_{n})\in\mathbb{R}^n$ such that $|a_{i}|=1$ for all $i=1,2,...,n.$
\end{enumerate} 
\end{prop}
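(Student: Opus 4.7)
The plan is to verify each identity by direct computation from the definition $(\mathfrak{F}_\alpha f)(\boldsymbol\xi)=\int_{\mathbb{R}^n} f(\boldsymbol t)K_\alpha(\boldsymbol t,\boldsymbol\xi)\,d\boldsymbol t$ together with the explicit form of $K_\alpha$, making a carefully chosen change of variable in each case and then collecting the phases. The absolute integrability of $f$ guarantees that all the integrals converge and all substitutions are legitimate via Fubini/change of variables on $\mathbb{R}^n$; no approximation or density argument is needed.

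For part (1), I would start from $(\mathfrak{F}_\alpha\{T_{\boldsymbol\eta,\alpha}f\})(\boldsymbol\xi)=\int_{\mathbb{R}^n} f(\boldsymbol t+\boldsymbol\eta)e^{i\langle\boldsymbol t,\boldsymbol\eta\rangle\cot\alpha}K_\alpha(\boldsymbol t,\boldsymbol\xi)\,d\boldsymbol t$ and substitute $\boldsymbol u=\boldsymbol t+\boldsymbol\eta$. Then using $\|\boldsymbol u-\boldsymbol\eta\|^2=\|\boldsymbol u\|^2-2\langle\boldsymbol u,\boldsymbol\eta\rangle+\|\boldsymbol\eta\|^2$ inside the kernel and adding the pre-factor phase $\langle\boldsymbol u-\boldsymbol\eta,\boldsymbol\eta\rangle\cot\alpha$, the terms linear in $\boldsymbol u$ and the term $\tfrac{1}{2}\|\boldsymbol u\|^2\cot\alpha$ reassemble into $K_\alpha(\boldsymbol u,\boldsymbol\xi)$, leaving a $\boldsymbol u$-independent phase $\exp\!\bigl(i\langle\boldsymbol\eta,\boldsymbol\xi\rangle\csc\alpha-\tfrac{i}{2}\|\boldsymbol\eta\|^2\cot\alpha\bigr)$ in front. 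Comparing with the definition of $M_{-\boldsymbol\eta,-\alpha}$ (noting $\csc(-\alpha)=-\csc\alpha$ and $\cot(-\alpha)=-\cot\alpha$, so the two sign changes cancel against those coming from $-\boldsymbol\eta$) yields exactly $(M_{-\boldsymbol\eta,-\alpha}\mathfrak{F}_\alpha f)(\boldsymbol\xi)$.

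For part (2), I would pull the factor $e^{i\langle\boldsymbol t,\boldsymbol\eta\rangle\csc\alpha+\tfrac{i}{2}\|\boldsymbol\eta\|^2\cot\alpha}$ inside the integrand and merge the $\langle\boldsymbol t,\boldsymbol\eta\rangle\csc\alpha$ with the $-\langle\boldsymbol t,\boldsymbol\xi\rangle\csc\alpha$ coming from $K_\alpha$, producing $-\langle\boldsymbol t,\boldsymbol\xi-\boldsymbol\eta\rangle\csc\alpha$. Completing the square in the quadratic exponent so that the kernel reads $K_\alpha(\boldsymbol t,\boldsymbol\xi-\boldsymbol\eta)$ up to an $\boldsymbol\xi$-dependent phase, one recognizes the result as $e^{i\langle\boldsymbol\xi,-\boldsymbol\eta\rangle\cot(-\alpha)}(\mathfrak{F}_\alpha f)(\boldsymbol\xi-\boldsymbol\eta)$, which is precisely $(T_{-\boldsymbol\eta,-\alpha}\mathfrak{F}_\alpha f)(\boldsymbol\xi)$.

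Part (3) is the cleanest: substitute $\boldsymbol u=\boldsymbol a\boldsymbol t$ in $\int f(\boldsymbol a\boldsymbol t)K_\alpha(\boldsymbol t,\boldsymbol\xi)\,d\boldsymbol t$. Because $|a_i|=1$ for every $i$, the Jacobian $|\boldsymbol a|_p$ equals $1$ and $\boldsymbol t=\boldsymbol a\boldsymbol u$ (since $1/a_i=a_i$); the same sign assumption gives $\|\boldsymbol a\boldsymbol u\|^2=\|\boldsymbol u\|^2$ and $\langle\boldsymbol a\boldsymbol u,\boldsymbol\xi\rangle=\langle\boldsymbol u,\boldsymbol a\boldsymbol\xi\rangle$, so $K_\alpha(\boldsymbol a\boldsymbol u,\boldsymbol\xi)=K_\alpha(\boldsymbol u,\boldsymbol a\boldsymbol\xi)$, and the integral collapses to $(\mathfrak{F}_\alpha f)(\boldsymbol a\boldsymbol\xi)$.

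The only mildly delicate step is the bookkeeping of signs in (1) and (2): the operators $T_{\cdot,\alpha}$ and $M_{\cdot,\alpha}$ each depend on $\alpha$ through $\cot\alpha$ and $\csc\alpha$, so on the right-hand side one must track both the sign flip from $\boldsymbol\eta\mapsto-\boldsymbol\eta$ and from $\alpha\mapsto-\alpha$; getting these two cancellations consistent with the phases extracted from the kernel computation is the main place to make an arithmetic error, but there is no conceptual obstacle.
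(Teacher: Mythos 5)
Your proposal is correct and follows essentially the same route as the paper: direct computation from the kernel definition, a change of variables ($\boldsymbol u=\boldsymbol t+\boldsymbol\eta$ in (1), $\boldsymbol u=\boldsymbol a\boldsymbol t$ in (3)) and careful collection of phases, with the signs from $\boldsymbol\eta\mapsto-\boldsymbol\eta$ and $\alpha\mapsto-\alpha$ cancelling exactly as you describe. The paper merely presents (1) and (3) more tersely and dismisses (2) with ``can be proved similarly,'' whereas you have actually spelled out the completing-the-square step that makes (2) work.
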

\begin{proof}
(1) Let $\boldsymbol\xi\in\mathbb{R}^n,$ then
\begin{eqnarray*}
(\mathfrak{F}_\alpha\{T_{\boldsymbol{\eta},\alpha}f\})(\boldsymbol{\xi})&=&\int_{\mathbb{R}^n}(T_{\boldsymbol{\eta},\alpha}f)(\boldsymbol{t})K_\alpha(\boldsymbol t,\boldsymbol\xi)d\boldsymbol{t}\\
&=&\int_{\mathbb{R}^n}f(\boldsymbol{t}+\boldsymbol{\eta})e^{i\langle\boldsymbol{t},\boldsymbol{\eta}\rangle\cot{\alpha}}C_{\alpha}e^{\frac{i}{2}(\|\boldsymbol{t}\|^2+\|\boldsymbol{\xi}\|^2)\cot\alpha-i\langle \boldsymbol t,\boldsymbol\xi\rangle\csc\alpha}d\boldsymbol{t}\\
&=& e^{-\frac{i}{2}\|\boldsymbol\eta\|^2\cot\alpha+i\langle\boldsymbol\eta,\boldsymbol\xi\rangle\csc\alpha}(\mathfrak{F}_\alpha f)(\boldsymbol\xi)\\
&=& (M_{\boldsymbol{-\eta},-\alpha}\{\mathfrak{F}_\alpha f\})(\boldsymbol{\xi}).\\
\end{eqnarray*}
(2) can be proved similarly.\\
(3) Let $\boldsymbol\xi\in\mathbb{R}^n,$ then
\begin{eqnarray*}
(\mathfrak{F}_\alpha\{D_{\textbf{\em a}}f\})(\boldsymbol{\xi})&=&\int_{\mathbb{R}^n}(D_{\textbf{\em a}}f)(\boldsymbol{t})K_\alpha(\boldsymbol t,\boldsymbol\xi)d\boldsymbol{t}\\
&=&\int_{\mathbb{R}^n}f({\textbf{\em a}}\boldsymbol{t})C_{\alpha}e^{\frac{i}{2}(\|\boldsymbol{t}\|^2+\|\boldsymbol{\xi}\|^2)\cot\alpha-i\langle \boldsymbol t,\boldsymbol\xi\rangle\csc\alpha}d\boldsymbol{t}\\
&=&\frac{1}{|{\textbf{\em a}}|_{p}}\int_{{R}^n}f(\boldsymbol{u})C_{\alpha}e^{\frac{i}{2}\left(\|{\frac{\boldsymbol{u}}{{\textbf{\em a}}}\|}^2+\|\boldsymbol{\xi}\|^2\right)\cot\alpha-i\left\langle \frac{\boldsymbol u}{{\textbf{\em a}}},\boldsymbol\xi\right\rangle\csc\alpha}d\boldsymbol{u}\\
&=&\frac{1}{|{\textbf{\em a}}|_{p}}\int_{{R}^n}f(\boldsymbol{u})C_{\alpha}e^{\frac{i}{2}({\|\boldsymbol{u}\|}^2+\|{\textbf{\em a}}\boldsymbol{\xi}\|^2)\cot\alpha-i\langle\boldsymbol u,{\textbf{\em a}}\boldsymbol\xi\rangle\csc\alpha}d\boldsymbol{u}\\
&=&(\mathfrak{F}_\alpha f)({\textbf{\em a}}\boldsymbol \xi),\ \mbox{since}\ |{\textbf{\em a}}|_{p}=1.
\end{eqnarray*}
\end{proof}
\subsection{FrFT of functions in $L^2(\mathbb{R}^n)$ and its properties}
 Let $f\in L^{2}(\mathbb{R}^n)$, then by the density property there exists a sequence $\{f_{k}\}$ in $L^{1}(\mathbb{R}^n)\bigcap L^{2}(\mathbb{R}^n)$ converging to $f$ in $L^{2}(\mathbb{R}^n)$.
Using theorem \ref{P1theo1.1}, we obtain 
 $${\|f_{p}-f_{q}\|}_{L^{2}(\mathbb{R}^n)}={\|\mathfrak{F}_{\alpha}f_{p}-\mathfrak{F}_\alpha f_{q}\|}_{L^{2}(\mathbb{R}^n)},$$ from which it follows that $\{\mathfrak{F}_\alpha f_{k}\}$ is a Cauchy sequence and hence by the Riesz-Fischer theorem we have $F\in L^{2}(\mathbb{R}^n)$ such that 
 $$\lim_{k\to\infty}\mathfrak{F}_\alpha f_{k}=F\ \mbox{in}\  L^{2}(\mathbb{R}^n).$$
 We claim that this $F$ is independent of the choice of the sequence $\{f_{k}\}$ in $L^{1}(\mathbb{R}^n)\bigcap L^{2}(\mathbb{R}^n)$.
 For, let $\{g_{k}\}$ be any sequence in $L^{1}(\mathbb{R}^n)\bigcap L^{2}(\mathbb{R}^n)$ converging to $f$ in $L^{2}(\mathbb{R}^n)$, then arguing similarly as above we have $F'$ in $L^{2}(\mathbb{R}^n)$ such that 
 $$\lim_{k\to\infty}\mathfrak{F}_\alpha g_{k}=F'\ \mbox{in}\  L^{2}(\mathbb{R}^n).$$
 Using theorem \ref{P1theo1.1} and the fact that the sequences $\{f_{k}-g_{k}\}$ and $\{\mathfrak{F}_{\alpha}f_{k}-\mathfrak{F}_\alpha g_{k}\}$ converge to $0$ and $F-F',$ respectively in $L^2(\mathbb{R}^n),$ we obtain
 $${\|F-F'\|}_{L^{2}(\mathbb{R}^n)}=0.$$
 Thus, $F=F'.$\\
 Hence, we arrive at the following definition.
 \begin{defn}\label{P1defn1.1}
 If $f\in L^{2}(\mathbb{R}^n)$, the unique  $F\in L^{2}(\mathbb{R}^n)$ is defined as the FrFT of $f$ i.e., $\mathfrak{F_\alpha}f=F$.\\
 For convenience we write, $$(\mathfrak{F}_{\alpha}f)(\boldsymbol\xi)=F(\boldsymbol\xi)=\displaystyle\int_{\mathbb{R}^n}f(\boldsymbol t)K_{\alpha}(\boldsymbol t,\boldsymbol\xi)d\boldsymbol t.$$
 \end{defn}
 \begin{theorem}\label{P1theo1.2}
 The map $\mathfrak{F}_\alpha: L^2(\mathbb{R}^n)\rightarrow L^2(\mathbb{R}^n)$ is a bijection satisfying the Parseval's identity, i.e.,
 $$\|f\|_{L^{2}(\mathbb{R}^n)}=\|\mathfrak{F_\alpha}f\|_{L^{2}(\mathbb{R}^n)},\   \mbox{for all}\ \ f\in L^{2}(\mathbb{R}^n).$$ 
 If $g\in L^{2}(\mathbb{R}^n)$, the unique $f\in L^{2}(\mathbb{R}^n)$ satisfying $\mathfrak{F}_\alpha f=g$ is given by $f=\mathfrak{F}_{-\alpha} g$.
 \end{theorem}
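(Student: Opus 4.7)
The plan is to extend Parseval's identity (Theorem \ref{P1theo1.1}) from $L^1(\mathbb{R}^n)\cap L^2(\mathbb{R}^n)$ to all of $L^2(\mathbb{R}^n)$ by a density argument, and then establish the inversion formula $\mathfrak{F}_\alpha(\mathfrak{F}_{-\alpha}f)=f$ on a dense subclass to obtain surjectivity. First I would prove Parseval on $L^2$: for $f\in L^2(\mathbb{R}^n)$, pick an approximating sequence $\{f_k\}\subset L^1\cap L^2$ with $f_k\to f$ in $L^2$. By Theorem \ref{P1theo1.1}, $\|f_k\|_{L^2(\mathbb{R}^n)}=\|\mathfrak{F}_\alpha f_k\|_{L^2(\mathbb{R}^n)}$; by Definition \ref{P1defn1.1}, $\mathfrak{F}_\alpha f_k\to \mathfrak{F}_\alpha f$ in $L^2$, and letting $k\to\infty$ yields $\|f\|_{L^2(\mathbb{R}^n)}=\|\mathfrak{F}_\alpha f\|_{L^2(\mathbb{R}^n)}$. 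Thus $\mathfrak{F}_\alpha$ is a linear isometry on $L^2(\mathbb{R}^n)$, hence injective with closed range.

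Next I would show the range is all of $L^2$ by proving $\mathfrak{F}_\alpha(\mathfrak{F}_{-\alpha}f)=f$ on the Schwartz class. For $f\in\mathcal{S}(\mathbb{R}^n)$, Proposition \ref{P1prop4} guarantees $\mathfrak{F}_\alpha(\mathfrak{F}_{-\alpha}f)\in\mathcal{S}(\mathbb{R}^n)\subset L^2(\mathbb{R}^n)$, and interchanging the order of integration formally gives
\[
\mathfrak{F}_\alpha(\mathfrak{F}_{-\alpha}f)(\boldsymbol t)=\int_{\mathbb{R}^n}f(\boldsymbol u)\left(\int_{\mathbb{R}^n}K_{-\alpha}(\boldsymbol u,\boldsymbol\xi)K_\alpha(\boldsymbol\xi,\boldsymbol t)d\boldsymbol\xi\right)d\boldsymbol u = f(\boldsymbol t),
\]
where the inner integral equals $K_0(\boldsymbol u,\boldsymbol t)=\delta(\boldsymbol u-\boldsymbol t)$ by property (K3). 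Consequently $\mathcal{S}(\mathbb{R}^n)\subset\mathfrak{F}_\alpha(L^2(\mathbb{R}^n))$; since $\mathcal{S}$ is dense in $L^2$ and the range is closed, $\mathfrak{F}_\alpha(L^2(\mathbb{R}^n))=L^2(\mathbb{R}^n)$. For the inverse, given $g\in L^2(\mathbb{R}^n)$ the unique $f$ satisfying $\mathfrak{F}_\alpha f=g$ must equal $\mathfrak{F}_{-\alpha}g$: apply the identity $\mathfrak{F}_{-\alpha}\mathfrak{F}_\alpha=\mathrm{id}$ on $\mathcal{S}$, which is proved by the same computation, and extend to $L^2$ using the continuity (isometry) of $\mathfrak{F}_\alpha$ and $\mathfrak{F}_{-\alpha}$ together with density of $\mathcal{S}$.

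The main obstacle will be rigorously justifying the Fubini interchange and the use of property (K3), since the fractional kernels are merely oscillating rather than absolutely integrable and (K3) with $\alpha+\beta=0$ produces a distribution. The cleanest route around this is to reduce, via Proposition \ref{P1prop1}, the composition $\mathfrak{F}_\alpha\circ\mathfrak{F}_{-\alpha}$ on $\mathcal{S}$ to the successive action of multiplication by unimodular chirps $e^{\pm\frac{i}{2}\|\cdot\|^2\cot\alpha}$, a classical Fourier transform, and a linear dilation; after cancellation of chirps and invocation of classical Fourier inversion (which is valid on $\mathcal{S}$), one recovers $f$ without having to manipulate $\delta$ distributions, making the interchange argument above fully rigorous.
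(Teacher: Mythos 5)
Your proposal is correct and follows essentially the same route as the paper: extend Parseval's identity to $L^2(\mathbb{R}^n)$ by density (the paper approximates by Schwartz functions, you by $L^1\cap L^2$ functions, which is immaterial), then obtain surjectivity and the inverse from $\mathfrak{F}_\alpha(\mathfrak{F}_{-\alpha}\phi)=\phi$ on $\mathcal{S}(\mathbb{R}^n)$ together with the isometry and density. If anything you are more careful than the paper, which asserts $\mathfrak{F}_\alpha(\mathfrak{F}_{-\alpha}g)=g$ without explicitly justifying the identity on Schwartz functions; your suggested reduction via Proposition \ref{P1prop1} to classical Fourier inversion is the right way to make that step rigorous.
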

 \begin{proof}
 Let $f\in L^{2}(\mathbb{R}^n)$ and $\{f_{k}\}$ be a sequence in $\mathcal{S}(\mathbb{R}^n)$ converging to $f$ in $L^{2}(\mathbb{R}^n)$. Then, by definition
 $$\mathfrak{F}_\alpha f=\lim_{k\to\infty}\mathfrak{F}_\alpha f_{k}\ \mbox{in} \ L^{2}(\mathbb{R}^n).$$ Also, we have $\|f_{k}\|_{L^{2}(\mathbb{R}^n)}=\|\mathfrak{F_\alpha}f_{k}\|_{L^{2}(\mathbb{R}^n)}.$  
 From which it follows that $$\|f\|_{L^{2}(\mathbb{R}^n)}=\|\mathfrak{F_\alpha}f\|_{L^{2}(\mathbb{R}^n)}.$$\\
 Let $\{\phi_{k}\} $ be a sequence in $\mathcal{S}(\mathbb{R}^n)$ converging to $g$ in $L^{2}(\mathbb{R}^n)$. Using Parseval's identity, we obtain 
 $$\mathfrak{F}_{-\alpha}\phi_{k}\rightarrow\mathfrak{F}_{-\alpha}g\ \mbox{in}\ L^{2}(\mathbb{R}^n).$$ By proposition \ref{P1prop3}, $\mathfrak{F}_{-\alpha}\phi_{k}\in \mathcal{S}(\mathbb{R}^n)$ for all $k$ and hence by the definition \ref{P1defn1.1} 
 $$\mathfrak{F}_\alpha(\mathfrak{F}_{-\alpha}\phi_{k})\rightarrow \mathfrak{F}_\alpha(\mathfrak{F}_{-\alpha}g)\ \mbox{in}\ L^{2}(\mathbb{R}^n).$$
 Thus, it follows that $\mathfrak{F}_\alpha(\mathfrak{F}_{-\alpha}g)=g\ \mbox{in}\ L^{2}(\mathbb{R}^n)$. This completes the proof.
\end{proof}
We mention below some properties of FrFT:
\begin{description}
\item[FrFTP1] (Parseval's formula) If $f,g$ is in $L^2(\mathbb{R}^n),$ then $$\langle f,g\rangle_{L^2(\mathbb{R}^n)}=\langle\mathfrak{F}_{\alpha}f,\mathfrak{F}_{\alpha}g\rangle_{L^2(\mathbb{R}^n)}.$$
\item[FrFTP2] $\mathfrak{F}_{\alpha}(\mathfrak{F}_{\beta}f)=\mathfrak{F}_{\alpha+\beta}f$ for all $f\in L^2(\mathbb{R}^n).$
\end{description}
\subsection{Fractional convolution and their FrFT}
\begin{defn}
Let $f$ and $g$ be two complex valued measurable functions defined on $\mathbb{R}^n$. Then the fractional convolution of $f$ and $g$, denoted by $f\star_\alpha g$, is defined by\\
$$(f\star_\alpha g)(\boldsymbol t)=\int_{\mathbb{R}^n}e^{-\frac{i}{2}(\|\boldsymbol t\|^2-\|\boldsymbol y\|^2)\cot\alpha}f(\boldsymbol y)g(\boldsymbol t-\boldsymbol y)d\boldsymbol y,\ \boldsymbol t\in\mathbb{R}^n,$$
provided the integral on the right is defined.
\end{defn}
Note that the fractional convolution reduces to the traditional convolution if $\alpha=\frac{\pi}{2}.$
\begin{theorem}\label{P1theo1.4}
If $f,g\in L^1(\mathbb{R}^n)$, then $$(\mathfrak{F}_\alpha\{f\star_{\alpha}g\})(\boldsymbol\xi)=\frac{1}{C_\alpha}e^{-\frac{i}{2}\|\boldsymbol\xi\|^2\cot\alpha}(\mathfrak{F}_\alpha f)(\boldsymbol\xi)\left(\mathfrak{F}_\alpha\left\{e^{-\frac{i}{2}\|\cdot\|^2\cot\alpha}g(\cdot)\right\}\right)(\boldsymbol\xi),\ \boldsymbol\xi\in\mathbb{R}^n.$$
\end{theorem}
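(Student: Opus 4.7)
The plan is to reduce the claim to the classical convolution theorem via Proposition \ref{P1prop1}, rather than unfolding the whole FrFT kernel from scratch. The key observation I would make first is that the chirp $e^{-\frac{i}{2}\|\boldsymbol t\|^2\cot\alpha}$ sitting inside the fractional convolution is exactly what lets us remove the awkward $\|\boldsymbol t\|^2$ quadratic term. Concretely, setting $\tilde f(\boldsymbol t)=e^{\frac{i}{2}\|\boldsymbol t\|^2\cot\alpha}f(\boldsymbol t)$, a quick manipulation shows
\begin{equation*}
e^{\frac{i}{2}\|\boldsymbol t\|^2\cot\alpha}(f\star_\alpha g)(\boldsymbol t)=\int_{\mathbb{R}^n}\tilde f(\boldsymbol y)\,g(\boldsymbol t-\boldsymbol y)\,d\boldsymbol y=(\tilde f * g)(\boldsymbol t),
\end{equation*}
where $*$ is the ordinary convolution. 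Since $f,g\in L^1(\mathbb{R}^n)$ we have $\tilde f\in L^1(\mathbb{R}^n)$ and hence $\tilde f*g\in L^1(\mathbb{R}^n)$; in particular $f\star_\alpha g\in L^1(\mathbb{R}^n)$, which justifies the use of Fubini implicit in the identity above.

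Next, I would apply Proposition \ref{P1prop1} to $f\star_\alpha g$, turning the FrFT into a classical Fourier transform evaluated at $\boldsymbol\xi/\sin\alpha$:
\begin{equation*}
(\mathfrak{F}_\alpha\{f\star_\alpha g\})(\boldsymbol\xi)=(2\pi)^{n/2}C_\alpha e^{\frac{i}{2}\|\boldsymbol\xi\|^2\cot\alpha}\bigl(\mathfrak{F}\{\tilde f*g\}\bigr)\!\left(\tfrac{\boldsymbol\xi}{\sin\alpha}\right).
\end{equation*}
The classical convolution theorem (with the symmetric normalisation used in the proof of Proposition \ref{P1prop1}) gives $\mathfrak{F}\{\tilde f*g\}=(2\pi)^{n/2}\,\mathfrak{F}\tilde f\cdot\mathfrak{F}g$.

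Now I would recognise each of the two factors back as an FrFT. For $\mathfrak{F}\tilde f(\boldsymbol\xi/\sin\alpha)$ this is Proposition \ref{P1prop1} read in reverse, giving $(2\pi)^{-n/2}C_\alpha^{-1}e^{-\frac{i}{2}\|\boldsymbol\xi\|^2\cot\alpha}(\mathfrak{F}_\alpha f)(\boldsymbol\xi)$. For $\mathfrak{F}g(\boldsymbol\xi/\sin\alpha)$, I would write $g=e^{\frac{i}{2}\|\boldsymbol t\|^2\cot\alpha}\cdot\bigl(e^{-\frac{i}{2}\|\boldsymbol t\|^2\cot\alpha}g\bigr)$, so that Proposition \ref{P1prop1} applied to the function $h(\boldsymbol t)=e^{-\frac{i}{2}\|\boldsymbol t\|^2\cot\alpha}g(\boldsymbol t)$ yields
\begin{equation*}
(\mathfrak{F}g)\!\left(\tfrac{\boldsymbol\xi}{\sin\alpha}\right)=\frac{1}{(2\pi)^{n/2}C_\alpha}\,e^{-\frac{i}{2}\|\boldsymbol\xi\|^2\cot\alpha}\left(\mathfrak{F}_\alpha\!\left\{e^{-\frac{i}{2}\|\cdot\|^2\cot\alpha}g(\cdot)\right\}\right)(\boldsymbol\xi).
\end{equation*}
Plugging these two expressions back and collecting the prefactors $(2\pi)^{n/2}C_\alpha e^{\frac{i}{2}\|\boldsymbol\xi\|^2\cot\alpha}\cdot(2\pi)^{n/2}$ against the two reciprocals $(2\pi)^{-n/2}C_\alpha^{-1}e^{-\frac{i}{2}\|\boldsymbol\xi\|^2\cot\alpha}$ produces exactly $C_\alpha^{-1}e^{-\frac{i}{2}\|\boldsymbol\xi\|^2\cot\alpha}$, which is the stated formula.

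There is really no conceptual obstacle; the only point requiring mild care is bookkeeping of the phase factors $C_\alpha$ and $e^{\pm\frac{i}{2}\|\boldsymbol\xi\|^2\cot\alpha}$, which is where a careless application of Proposition \ref{P1prop1} could easily leave a stray $(2\pi)^{n/2}$ or a sign error. A direct alternative is to substitute the kernel $K_\alpha(\boldsymbol t,\boldsymbol\xi)$ into the definition of $\mathfrak{F}_\alpha\{f\star_\alpha g\}$, cancel the $e^{\pm\frac{i}{2}\|\boldsymbol t\|^2\cot\alpha}$ factors immediately, apply Fubini, and change variables $\boldsymbol u=\boldsymbol t-\boldsymbol y$ to separate the double integral into a product; this avoids invoking the classical convolution theorem but requires the same chirp accounting at the end.
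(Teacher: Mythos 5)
Your argument is correct; I checked the bookkeeping and the prefactors do collapse to $C_\alpha^{-1}e^{-\frac{i}{2}\|\boldsymbol\xi\|^2\cot\alpha}$ exactly as you claim. However, your main route differs from the paper's. The paper proves the theorem by the "direct alternative" you mention only in passing at the end: it substitutes the kernel $K_\alpha(\boldsymbol t,\boldsymbol\xi)$ into the definition of $\mathfrak{F}_\alpha\{f\star_\alpha g\}$, changes variables $\boldsymbol u=\boldsymbol t-\boldsymbol y$, and regroups the resulting double integral into the product of the two FrFTs, inserting and removing the chirp and $C_\alpha$ factors by hand along the way. Your proof instead factors the whole computation through Proposition \ref{P1prop1}: the identity $e^{\frac{i}{2}\|\boldsymbol t\|^2\cot\alpha}(f\star_\alpha g)=\tilde f * g$ turns the fractional convolution into an ordinary one, after which the classical convolution theorem does the work and Proposition \ref{P1prop1} (used three times, once in reverse) translates everything back into FrFTs. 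What your version buys is modularity and a cleaner integrability argument — you get $f\star_\alpha g\in L^1(\mathbb{R}^n)$ for free from Young's inequality, which justifies the Fubini step that the paper applies silently — at the cost of having to track the $(2\pi)^{n/2}$ and $C_\alpha$ normalisations across several invocations of Proposition \ref{P1prop1}. The paper's direct computation avoids any dependence on the classical convolution theorem but buries the same chirp accounting inside one long chain of equalities. Both are valid; yours makes the structural reason for the chirp in the definition of $\star_\alpha$ more transparent.
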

\begin{proof}
For $\boldsymbol\xi\in\mathbb{R}^n,$
\begin{eqnarray*}
(\mathfrak{F}_\alpha\{f\star_{\alpha}g\})(\boldsymbol\xi)&=&\int_{\mathbb{R}^n}(f\star_{\alpha}g)(\boldsymbol t)K_{\alpha}(\boldsymbol t,\boldsymbol\xi)d\boldsymbol t\\
&=&\int_{\mathbb{R}^n}\left(\int_{\mathbb{R}^n}e^{-\frac{i}{2}(\|\boldsymbol t\|^2-\|\boldsymbol y\|^2)\cot\alpha}f(\boldsymbol y)g(\boldsymbol t-\boldsymbol y)d\boldsymbol y\right)C_{\alpha}e^{\frac{i}{2}(\|\boldsymbol t\|^2+\|\boldsymbol\xi\|^2)\cot\alpha-i\langle\boldsymbol t,\boldsymbol\xi\rangle\csc\alpha}d\boldsymbol t.
\end{eqnarray*}
Substituting $\boldsymbol t-\boldsymbol y=\boldsymbol u,$ we obtain
\begin{eqnarray*}
(\mathfrak{F}_\alpha\{f\star_{\alpha}g\})(\boldsymbol\xi)&=&\int_{\mathbb{R}^n}e^{-i\langle\boldsymbol u,\boldsymbol\xi\rangle\csc\alpha}g(\boldsymbol u)\left(\int_{\mathbb{R}^n}f(\boldsymbol y)C_{\alpha}e^{\frac{i}{2}(\|\boldsymbol y\|^2+\|\boldsymbol\xi\|^2)\cot\alpha-i\langle\boldsymbol y,\boldsymbol\xi\rangle\csc\alpha}d\boldsymbol y\right)d\boldsymbol u\\
&=&\frac{1}{C_{\alpha}}\int_{\mathbb{R}^n}e^{-\frac{i}{2}(\|\boldsymbol u\|^2+\|\boldsymbol\xi\|^2)\cot\alpha}g(\boldsymbol u)K_{\alpha}(\boldsymbol u,\boldsymbol\xi)\left(\int_{\mathbb{R}^n}f(\boldsymbol y)K_\alpha(\boldsymbol y,\boldsymbol\xi)d\boldsymbol y\right)d\boldsymbol u\\
&=&\frac{1}{C_\alpha}e^{-\frac{i}{2}\|\boldsymbol\xi\|^2\cot\alpha}(\mathfrak{F}_\alpha f)(\boldsymbol\xi)\left(\mathfrak{F}_\alpha\left\{e^{-\frac{i}{2}\|\cdot\|^2\cot\alpha}g(\cdot)\right\}\right)(\boldsymbol\xi).
\end{eqnarray*}
This completes the proof.
\end{proof}
\begin{corollary}\label{P1corollary1.1}
For $f,g\in L^1(\mathbb{R}^n)$ and $\textbf{\em a}\in\mathbb{R}_{0}^n,$ $$\ \left(\mathfrak{F}_\alpha\left\{f(\cdot)\star_{\alpha}g\left(\frac{.}{-{\textbf{\em a}}}\right)\right\}\right)(\boldsymbol\xi)=\frac{|\textbf{\em a}|_{p}}{C_\alpha}e^{-\frac{i}{2}\|\textbf{\em a}\boldsymbol\xi\|^2\cot\alpha}(\mathfrak{F}_\alpha f)(\boldsymbol\xi)\left(\mathfrak{F}_\alpha\left\{e^{-\frac{i}{2}\|\cdot\|^2\cot\alpha}g(\cdot)\right\}\right)(-\textbf{\em a}\boldsymbol\xi),~\boldsymbol\xi\in\mathbb{R}^n.$$
\end{corollary}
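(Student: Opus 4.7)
The plan is to reduce the corollary to Theorem \ref{P1theo1.4} by viewing $g(\cdot/(-\textbf{\em a}))$ as a new function and then tracking how its chirp-modulated FrFT rescales under the dilation $\boldsymbol t\mapsto -\textbf{\em a}\boldsymbol t$. Concretely, set $h(\boldsymbol t)=g(\boldsymbol t/(-\textbf{\em a}))$. Since $\textbf{\em a}\in\mathbb{R}^n_0$ the change of variable $\boldsymbol t=-\textbf{\em a}\boldsymbol u$ is a bijection with Jacobian $|\textbf{\em a}|_p$, so $h\in L^1(\mathbb{R}^n)$ and Theorem \ref{P1theo1.4} applies directly to $f\star_{\alpha}h$, giving
\begin{equation*}
(\mathfrak{F}_\alpha\{f\star_{\alpha}h\})(\boldsymbol\xi)=\frac{1}{C_\alpha}e^{-\frac{i}{2}\|\boldsymbol\xi\|^2\cot\alpha}(\mathfrak{F}_\alpha f)(\boldsymbol\xi)\left(\mathfrak{F}_\alpha\left\{e^{-\frac{i}{2}\|\cdot\|^2\cot\alpha}h(\cdot)\right\}\right)(\boldsymbol\xi).
\end{equation*}

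The main step is therefore to rewrite the last factor in terms of $g$ evaluated at $-\textbf{\em a}\boldsymbol\xi$. Expanding the definition and performing the substitution $\boldsymbol u=\boldsymbol t/(-\textbf{\em a})$ yields
\begin{equation*}
\left(\mathfrak{F}_\alpha\left\{e^{-\frac{i}{2}\|\cdot\|^2\cot\alpha}h(\cdot)\right\}\right)(\boldsymbol\xi)=C_\alpha e^{\frac{i}{2}\|\boldsymbol\xi\|^2\cot\alpha}|\textbf{\em a}|_p\int_{\mathbb{R}^n}g(\boldsymbol u)e^{-i\langle\boldsymbol u,-\textbf{\em a}\boldsymbol\xi\rangle\csc\alpha}d\boldsymbol u,
\end{equation*}
where the quadratic chirp $e^{-\frac{i}{2}\|\boldsymbol t\|^2\cot\alpha}$ (evaluated at $\boldsymbol t=-\textbf{\em a}\boldsymbol u$) cancels against the corresponding factor produced by $K_\alpha$, leaving only a bare exponential in $\boldsymbol u$. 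Recognizing this remaining integral as the one that arises when writing out $\mathfrak{F}_\alpha\{e^{-\frac{i}{2}\|\cdot\|^2\cot\alpha}g(\cdot)\}$ at the point $-\textbf{\em a}\boldsymbol\xi$, one obtains
\begin{equation*}
\left(\mathfrak{F}_\alpha\left\{e^{-\frac{i}{2}\|\cdot\|^2\cot\alpha}h(\cdot)\right\}\right)(\boldsymbol\xi)=|\textbf{\em a}|_p\,e^{\frac{i}{2}(\|\boldsymbol\xi\|^2-\|\textbf{\em a}\boldsymbol\xi\|^2)\cot\alpha}\left(\mathfrak{F}_\alpha\left\{e^{-\frac{i}{2}\|\cdot\|^2\cot\alpha}g(\cdot)\right\}\right)(-\textbf{\em a}\boldsymbol\xi).
\end{equation*}

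Inserting this into the formula from Theorem \ref{P1theo1.4}, the chirp factors combine as $e^{-\frac{i}{2}\|\boldsymbol\xi\|^2\cot\alpha}\cdot e^{\frac{i}{2}(\|\boldsymbol\xi\|^2-\|\textbf{\em a}\boldsymbol\xi\|^2)\cot\alpha}=e^{-\frac{i}{2}\|\textbf{\em a}\boldsymbol\xi\|^2\cot\alpha}$, producing exactly the claimed identity. The main obstacle, and the only non-routine point, is the careful bookkeeping of the quadratic chirps under the anisotropic dilation $\boldsymbol t\mapsto-\textbf{\em a}\boldsymbol t$: because $\|\textbf{\em a}\boldsymbol u\|\neq\|\boldsymbol u\|$ in general, one cannot simply apply a FrFT dilation rule, and the chirp mismatch $\|\boldsymbol\xi\|^2-\|\textbf{\em a}\boldsymbol\xi\|^2$ must be produced and then absorbed explicitly, as above, to land on the symmetric-looking right-hand side with argument $-\textbf{\em a}\boldsymbol\xi$.
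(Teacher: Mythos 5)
Your proposal is correct, and it is exactly the intended derivation: the paper states this corollary immediately after Theorem \ref{P1theo1.4} without proof, and the natural route is precisely yours — apply the theorem to $f$ and $h(\boldsymbol t)=g(\boldsymbol t/(-\textbf{\em a}))$, then change variables $\boldsymbol t=-\textbf{\em a}\boldsymbol u$ to convert $\mathfrak{F}_\alpha\{e^{-\frac{i}{2}\|\cdot\|^2\cot\alpha}h(\cdot)\}(\boldsymbol\xi)$ into $|\textbf{\em a}|_p e^{\frac{i}{2}(\|\boldsymbol\xi\|^2-\|\textbf{\em a}\boldsymbol\xi\|^2)\cot\alpha}\mathfrak{F}_\alpha\{e^{-\frac{i}{2}\|\cdot\|^2\cot\alpha}g(\cdot)\}(-\textbf{\em a}\boldsymbol\xi)$. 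The chirp bookkeeping you describe checks out, and the factors combine to give the stated right-hand side.
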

\begin{corollary}\label{P1corollary1.2}
If $f,g\in L^1(\mathbb{R}^n)$ and $\textbf{\em a}\in\mathbb{R}_{0}^n$, then
$$\left(\mathfrak{F}_{-\alpha}\left\{f(\cdot)\star_{-\alpha}g\left(\frac{.}{-{\textbf{\em a}}}\right)\right\}\right)(\boldsymbol\xi)=\frac{|\textbf{\em a}|_{p}}{C_{\alpha}}e^{-\frac{i}{2}\|\textbf{\em a}\boldsymbol\xi\|^2\cot\alpha}(\mathfrak{F}_{-\alpha}f)(\boldsymbol\xi)\left(\mathfrak{F}_\alpha\left\{e^{-\frac{i}{2}\|\cdot\|^2\cot\alpha}g(\cdot)\right\}\right)(\textbf{\em a}\boldsymbol\xi),\ \mbox{for}\  \boldsymbol\xi\in\mathbb{R}^n.$$
\end{corollary}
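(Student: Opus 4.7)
The plan is to obtain this as a companion to Corollary \ref{P1corollary1.1}: first apply Theorem \ref{P1theo1.4} with $\alpha$ replaced by $-\alpha$ to the convolution $f \star_{-\alpha} h$, where $h(\boldsymbol t)=g(\boldsymbol t/(-\textbf{\em a}))$, and then evaluate the resulting fractional Fourier transform of $e^{\frac{i}{2}\|\cdot\|^{2}\cot\alpha}h(\cdot)$ by a direct change of variables so as to recognise a factor of $\mathfrak{F}_{\alpha}\{e^{-\frac{i}{2}\|\cdot\|^{2}\cot\alpha}g(\cdot)\}(\textbf{\em a}\boldsymbol\xi)$.

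Carrying this out, Theorem \ref{P1theo1.4} (with $-\alpha$ in place of $\alpha$, using $\cot(-\alpha)=-\cot\alpha$) gives
\[
(\mathfrak{F}_{-\alpha}\{f\star_{-\alpha}h\})(\boldsymbol\xi)=\frac{1}{C_{-\alpha}}e^{\frac{i}{2}\|\boldsymbol\xi\|^{2}\cot\alpha}(\mathfrak{F}_{-\alpha}f)(\boldsymbol\xi)\Bigl(\mathfrak{F}_{-\alpha}\bigl\{e^{\frac{i}{2}\|\cdot\|^{2}\cot\alpha}h(\cdot)\bigr\}\Bigr)(\boldsymbol\xi).
\]
For the third factor I would substitute $\boldsymbol u=\boldsymbol t/(-\textbf{\em a})$ (so $\boldsymbol t=-\textbf{\em a}\boldsymbol u$, $d\boldsymbol t=|\textbf{\em a}|_{p}\,d\boldsymbol u$, $\|\boldsymbol t\|^{2}=\|\textbf{\em a}\boldsymbol u\|^{2}$), which after cancelling the two $e^{\pm\frac{i}{2}\|\textbf{\em a}\boldsymbol u\|^{2}\cot\alpha}$ exponentials coming from $h$ and from $K_{-\alpha}(\boldsymbol t,\boldsymbol\xi)$, leaves an integral of the form
\[
|\textbf{\em a}|_{p}\,C_{-\alpha}\,e^{-\frac{i}{2}\|\boldsymbol\xi\|^{2}\cot\alpha}\int_{\mathbb{R}^{n}}g(\boldsymbol u)\,e^{-i\langle\boldsymbol u,\textbf{\em a}\boldsymbol\xi\rangle\csc\alpha}\,d\boldsymbol u.
\]
On the other hand, writing out $\mathfrak{F}_{\alpha}\{e^{-\frac{i}{2}\|\cdot\|^{2}\cot\alpha}g(\cdot)\}(\textbf{\em a}\boldsymbol\xi)$ from the definition and cancelling the $e^{\pm\frac{i}{2}\|\boldsymbol u\|^{2}\cot\alpha}$ terms gives the same integral, multiplied by $C_{\alpha}e^{\frac{i}{2}\|\textbf{\em a}\boldsymbol\xi\|^{2}\cot\alpha}$. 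Equating these two expressions identifies the third factor as
\[
\frac{|\textbf{\em a}|_{p}\,C_{-\alpha}}{C_{\alpha}}\,e^{-\frac{i}{2}\|\boldsymbol\xi\|^{2}\cot\alpha}\,e^{-\frac{i}{2}\|\textbf{\em a}\boldsymbol\xi\|^{2}\cot\alpha}\,\Bigl(\mathfrak{F}_{\alpha}\bigl\{e^{-\frac{i}{2}\|\cdot\|^{2}\cot\alpha}g(\cdot)\bigr\}\Bigr)(\textbf{\em a}\boldsymbol\xi).
\]

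Inserting this into the first display collapses the $1/C_{-\alpha}$ with $C_{-\alpha}$ and the $e^{\pm\frac{i}{2}\|\boldsymbol\xi\|^{2}\cot\alpha}$ factors with each other, leaving exactly the claimed formula. The steps are all routine; the only real bookkeeping issue, which is the main point to watch, is to track the sign flip in $\cot\alpha$ and $\csc\alpha$ inherited from the kernel $K_{-\alpha}$ and to keep careful account of the constants $C_{\pm\alpha}$ so that the two Gaussian exponentials (one coming from the Theorem \ref{P1theo1.4} output, one from the change of variable) cancel as expected and the remaining exponential has the argument $\|\textbf{\em a}\boldsymbol\xi\|^{2}$ as stated.
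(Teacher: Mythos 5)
Your proposal is correct and follows essentially the same route as the paper: both reduce the statement to Theorem \ref{P1theo1.4} with $\alpha$ replaced by $-\alpha$ and then convert the resulting factor $\mathfrak{F}_{-\alpha}\{e^{\frac{i}{2}\|\cdot\|^{2}\cot\alpha}\,\cdot\,\}$ into the stated $\mathfrak{F}_{\alpha}\{e^{-\frac{i}{2}\|\cdot\|^{2}\cot\alpha}\,\cdot\,\}(\textbf{\em a}\boldsymbol\xi)$ by writing out the kernel and cancelling the chirp exponentials. The only cosmetic difference is that the paper passes through Corollary \ref{P1corollary1.1} (with $-\alpha$) and then proves the conversion identity separately as its equation (\ref{P1eqn4}), whereas you fold the dilation by $-\textbf{\em a}$ and the $\pm\alpha$ conversion into a single change of variables; the constants and exponentials work out identically.
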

\begin{proof}
From the corollary \ref{P1corollary1.1}, we have for $\boldsymbol\xi\in\mathbb{R}^n,$
\begin{eqnarray}\label{P1eqn3}
\left(\mathfrak{F}_{-\alpha}\left\{f(\cdot)\star_{-\alpha}g\left(\frac{.}{-{\textbf{\em a}}}\right)\right\}\right)(\boldsymbol\xi)=\frac{|\boldsymbol a|_{p}}{C_{-\alpha}}e^{\frac{i}{2}\|\boldsymbol a\boldsymbol\xi\|^2\cot\alpha}(\mathfrak{F}_{-\alpha}f)(\boldsymbol\xi)\left(\mathfrak{F}_{-\alpha}\left\{e^{\frac{i}{2}\|\cdot\|^2\cot\alpha}g(\cdot)\right\}\right)(-\textbf{\em a}\boldsymbol\xi),\ \boldsymbol\xi\in\mathbb{R}^n.
\end{eqnarray}
Now,
\begin{eqnarray}\label{P1eqn4}
\left(\mathfrak{F}_{-\alpha}\left\{e^{\frac{i}{2}\|\cdot\|^2\cot\alpha}g(\cdot)\right\}\right)(\boldsymbol\xi)&=&\int_{\mathbb{R}^n}K_{-\alpha}(\boldsymbol t,\boldsymbol\xi)e^{\frac{i}{2}\|\boldsymbol t\|^2\cot\alpha}g(\boldsymbol t)d\boldsymbol t\notag\\
&=&C_{-\alpha}\int_{\mathbb{R}^n}e^{-\frac{i}{2}\|\boldsymbol\xi\|^2\cot\alpha-i\langle\boldsymbol t,-\boldsymbol\xi\rangle\csc\alpha}g(\boldsymbol t)d\boldsymbol t\notag\\
&=&\frac{C_{-\alpha}}{C_{\alpha}}e^{-i\|\boldsymbol\xi\|^2\cot\alpha}\int_{\mathbb{R}^n}e^{-\frac{i}{2}\|\boldsymbol t\|^2\cot\alpha}g(\boldsymbol t)K_{\alpha}(\boldsymbol t,-\boldsymbol\xi)d\boldsymbol t\notag\\
&=&\frac{C_{-\alpha}}{C_{\alpha}}e^{-i\|\boldsymbol\xi\|^2\cot\alpha}\left(\mathfrak{F}_\alpha\left\{e^{-\frac{i}{2}\|\cdot\|^2\cot\alpha}g(\cdot)\right\}\right)(-\boldsymbol\xi). 
\end{eqnarray}
Using equation (\ref{P1eqn4}) in equation (\ref{P1eqn3}), we get the required result.
\end{proof}
\begin{note}
This theorem also holds if $f\in L^1(\mathbb{R}^n),g\in L^2(\mathbb{R}^n)$ and $\boldsymbol a\in\mathbb{R}_{0}^n.$
\end{note}
\subsection{Wavelet admissible functions and their characterization}
\begin{defn}
A non-zero function $\psi\in L^2(\mathbb{R}^n)$ is said to be wavelet admissible if $$\langle\psi_{\textbf{\em a},\textbf{\em b},\alpha },\psi\rangle_{L^2(\mathbb{R}^n)}\in L^2\left(\mathbb{R}^n\times\mathbb{R}_{0}^n,\frac{d\boldsymbol b d\boldsymbol a}{|{\boldsymbol a}|_{p}^2}\right),~\mbox{i.e.},\int_{\mathbb{R}_0^n}\int_{\mathbb{R}^n}\left|\langle\psi_{\textbf{\em a},\textbf{\em b},\alpha },\psi\rangle_{L^2(\mathbb{R}^n)}\right|^2\frac{d\boldsymbol b d\boldsymbol a}{|{\boldsymbol a}|_{p}^2}<\infty,$$
where $\psi_{\boldsymbol a,\boldsymbol b,\alpha}$ is given by equation (\ref{P1eqnC}).
\end{defn}
A wavelet admissible function may also be sometimes called mother wavelet or wavelet in short.\\
For a given wavelet $\psi$ we define a family of fractional wavelets denoted by $\psi_{\textbf{\em a},\boldsymbol b,\alpha}$ and is defined by,\\
\begin{equation}\label{P1eqnC}
\psi_{\textbf{\em a},\boldsymbol b,\alpha}=\frac{1}{\sqrt{|\textbf{\em a}|_{p}}}\psi\left(\frac{\boldsymbol t-\boldsymbol b}{\textbf{\em a}}\right)e^{-\frac{i}{2}({\|\boldsymbol t}\|^2-{\|\boldsymbol b}\|^2)\cot\alpha},
\end{equation}
where $ \textbf{\em a}\in\mathbb{R}_{0}^n,\boldsymbol b\in\mathbb{R}^n.$\\
We have, 
\begin{eqnarray*}
\langle\psi_{\textbf{\em a},\textbf{\em b},\alpha },\psi\rangle_{L^2(\mathbb{R}^n)} &=&
\int_{\mathbb{R}^n}\psi_{\textbf{\em a},\textbf{\em b},\alpha}(\boldsymbol t)\overline{\psi(\boldsymbol t)}d\boldsymbol t\\
&=& \int_{\mathbb{R}^n}\frac{1}{\sqrt{|\textbf{\em a}|_{p}}}\psi\left(\frac{\boldsymbol t-\boldsymbol b}{\textbf{\em a}}\right)e^{-\frac{i}{2}(\|\boldsymbol t\|^2-\|\boldsymbol{b}\|^2)\cot\alpha}\overline{\psi(\boldsymbol t)}d\boldsymbol t\\
&=& \int_{\mathbb{R}^n}\frac{1}{\sqrt{|\textbf{\em a}|_{p}}}\psi\left(\frac{\boldsymbol b-\boldsymbol t}{-\textbf{\em a}}\right)e^{-\frac{i}{2}(\|\boldsymbol b\|^2-\|\boldsymbol{t}\|^2)\cot({-\alpha})}\overline{\psi(\boldsymbol t)}d\boldsymbol t\\
&=& \left(\frac{1}{\sqrt{|\textbf{\em a}|_{p}}}\psi\left(\frac{.}{-\textbf{\em a}}\right)\star_{-\alpha}\overline{\psi(\cdot)}\right)(\boldsymbol b).
\end{eqnarray*}
We reevaluate the admissibility condition:
\begin{equation}\label{P1eqnB}
\hspace{-1cm}\displaystyle\int_{\mathbb{R}_0^n}\int_{\mathbb{R}^n}\left|\langle\psi_{\textbf{\em a},\textbf{\em b},\alpha },\psi\rangle_{L^2(\mathbb{R}^n)}\right|^2\frac{d\boldsymbol b d\boldsymbol a}{|{\boldsymbol a}|_{p}^2}\\[6pt]
=\displaystyle\int_{\mathbb{R}_0^n}\int_{\mathbb{R}^n}\left|\left(\frac{1}{\sqrt{|\textbf{\em a}|_{p}}}\psi\left(\frac{.}{-\textbf{\em a}}\right)\star_{-\alpha}\overline{\psi(\cdot)}\right)(\boldsymbol b)\right|^2\frac{d\boldsymbol b d\boldsymbol a}{|{\boldsymbol a}|_{p}^2}.\\[6pt]
\end{equation}
Using corollary \ref{P1corollary1.2} in equation (\ref{P1eqnB}), leads to
\begin{equation*}
\begin{array}{l}
\hspace{-2cm}\displaystyle\int_{\mathbb{R}_0^n}\int_{\mathbb{R}^n}\left|\langle\psi_{\textbf{\em a},\textbf{\em b},\alpha },\psi\rangle_{L^2(\mathbb{R}^n)}\right|^2\frac{d\boldsymbol b d\boldsymbol a}{|{\boldsymbol a}|_{p}^2}\\
=\displaystyle\int_{\mathbb{R}_0^n}\int_{\mathbb{R}^n}\left|\frac{1}{\sqrt{|\textbf{\em a}|_{p}}}\frac{|\textbf{\em a}|_{p}}{C_{\alpha}}e^{-\frac{i}{2}\|\textbf{\em a}\boldsymbol\xi\|^2\cot\alpha}(\mathfrak{F}_{-\alpha}\bar{\psi})(\boldsymbol\xi)\left(\mathfrak{F}_\alpha\left\{e^{-\frac{i}{2}\|\cdot\|^2\cot\alpha}\psi(\cdot)\right\}\right)(\textbf{\em a}\boldsymbol\xi)\right|^2d\boldsymbol\xi\frac{d\textbf{\em a}}{|{\boldsymbol a}|_{p}^2}\\
=\frac{1}{|C_{\alpha}|^2}\displaystyle\int_{\mathbb{R}_{0}^n}\int_{\mathbb{R}^n}|(\mathfrak{F}_{-\alpha}\bar{\psi})(\boldsymbol\xi)|^{2}\left|\left(\mathfrak{F}_\alpha\left\{e^{-\frac{i}{2}\|\cdot\|^2\cot\alpha}\psi(\cdot)\right\}\right)(\textbf{\em a}\boldsymbol\xi)\right|^2\frac{d\boldsymbol\xi d\textbf{\em a}}{|\textbf{\em a}|_{p}}\\[6pt]
= \frac{1}{|C_{\alpha}|^2}\displaystyle\int_{\mathbb{R}^n}|(\mathfrak{F}_{-\alpha}\bar{\psi})(\boldsymbol\xi)|^{2}\left(\int_{\mathbb{R}_{0}^n}\left|\left(\mathfrak{F}_\alpha\left\{e^{-\frac{i}{2}\|\cdot\|^2\cot\alpha}\psi(\cdot)\right\}\right)(\textbf{\em a}\boldsymbol\xi)\right|^2\frac{d\textbf{\em a}}{|\textbf{\em a}|_{p}}\right)d\boldsymbol\xi \\[6pt]
=\frac{1}{|C_{\alpha}|^2}\displaystyle\int_{\mathbb{R}^n}|(\mathfrak{F}_{-\alpha}\bar{\psi})(\boldsymbol\xi)|^{2}\left(\int_{\mathbb{R}_{0}^n}\left|\left(\mathfrak{F}_\alpha\left\{e^{-\frac{i}{2}\|\cdot\|^2\cot\alpha}\psi(\cdot)\right\}\right)(\boldsymbol u)\right|^2\frac{1}{|\boldsymbol u|_{p}} d\boldsymbol u\right)d\boldsymbol\xi \\[10pt]
=\frac{1}{|C_{\alpha}|^2}C_{\psi ,\alpha}\|\psi\|^2_{L^2(\mathbb{R}^n)},
\end{array}
\end{equation*}
\begin{equation}\label{P1eqn5}
\hspace{-10cm}\mbox{where}\ C_{\psi ,\alpha}=\displaystyle\int_{\mathbb{R}_{0}^n}\left|\left(\mathfrak{F}_\alpha\left\{e^{-\frac{i}{2}\|\cdot\|^2\cot\alpha}\psi(\cdot)\right\}\right)(\boldsymbol u)\right|^2\frac{1}{|\boldsymbol u|_{p}} d\boldsymbol u.
\end{equation}
As a consequence of which we have the following characterization of the wavelet admissible function.
\begin{theorem}\label{P1theo1.6}
A non-zero function $\psi\in L^2(\mathbb{R}^n)$ is wavelet admissible if and only if $C_{\psi,\alpha},$ as defined in (\ref{P1eqn5}), is finite, i.e.,
$$\int_{\mathbb{R}_{0}^n}\left|\left(\mathfrak{F}_\alpha\left\{e^{-\frac{i}{2}\|\cdot\|^2\cot\alpha}\psi(\cdot)\right\}\right)(\boldsymbol u)\right|^2\frac{1}{|\boldsymbol u|_{p}} d\boldsymbol u<\infty.$$
\end{theorem}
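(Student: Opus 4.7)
The plan is to observe that the chain of equalities displayed immediately before the theorem establishes the identity
$$\int_{\mathbb{R}_0^n}\int_{\mathbb{R}^n}\left|\langle\psi_{\boldsymbol a,\boldsymbol b,\alpha},\psi\rangle_{L^2(\mathbb{R}^n)}\right|^2\frac{d\boldsymbol b\,d\boldsymbol a}{|\boldsymbol a|_p^2}=\frac{C_{\psi,\alpha}}{|C_\alpha|^2}\,\|\psi\|^2_{L^2(\mathbb{R}^n)}.$$
Once this is in hand, both directions of the equivalence follow at once: since $\psi$ is non-zero and belongs to $L^2(\mathbb{R}^n)$, the quantity $\|\psi\|^2_{L^2(\mathbb{R}^n)}$ is a strictly positive finite constant, so the left-hand side is finite if and only if $C_{\psi,\alpha}$ is. The task therefore reduces to recording the three nontrivial manipulations cleanly.

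My execution would proceed in three stages. First, I would rewrite $\langle\psi_{\boldsymbol a,\boldsymbol b,\alpha},\psi\rangle_{L^2(\mathbb{R}^n)}$ as a fractional convolution evaluated at $\boldsymbol b$: rearranging the phase $e^{-\frac{i}{2}(\|\boldsymbol t\|^2-\|\boldsymbol b\|^2)\cot\alpha}$ as $e^{-\frac{i}{2}(\|\boldsymbol b\|^2-\|\boldsymbol t\|^2)\cot(-\alpha)}$ puts the inner product in precisely the form $\bigl(\tfrac{1}{\sqrt{|\boldsymbol a|_p}}\psi(\cdot/(-\boldsymbol a))\star_{-\alpha}\overline{\psi}\bigr)(\boldsymbol b)$. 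Second, for fixed $\boldsymbol a\in\mathbb{R}_0^n$ I would apply Parseval's identity (Theorem \ref{P1theo1.1}) to the inner $\boldsymbol b$-integral, converting it to the $L^2$-norm squared of $\mathfrak{F}_{-\alpha}$ of that convolution, and then invoke Corollary \ref{P1corollary1.2} to expand the resulting FrFT explicitly. The unimodular exponential factor disappears on taking $|\cdot|^2$, leaving the integrand as
$$\frac{|\boldsymbol a|_p^2}{|C_\alpha|^2}\,\bigl|(\mathfrak{F}_{-\alpha}\overline{\psi})(\boldsymbol\xi)\bigr|^2\,\Bigl|\bigl(\mathfrak{F}_\alpha\{e^{-\frac{i}{2}\|\cdot\|^2\cot\alpha}\psi\}\bigr)(\boldsymbol a\boldsymbol\xi)\Bigr|^2.$$
Third, I would invoke Tonelli (justified by non-negativity of the integrand) to swap the $\boldsymbol a$ and $\boldsymbol\xi$ integrals, then perform the coordinatewise substitution $\boldsymbol u=\boldsymbol a\boldsymbol\xi$ for each fixed $\boldsymbol\xi\in\mathbb{R}_0^n$; the Jacobian $|\boldsymbol\xi|_p$ cancels so that $d\boldsymbol a/|\boldsymbol a|_p$ becomes $d\boldsymbol u/|\boldsymbol u|_p$. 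The double integral factors as $C_{\psi,\alpha}\cdot\|\mathfrak{F}_{-\alpha}\overline{\psi}\|^2_{L^2(\mathbb{R}^n)}$, and a final application of Parseval delivers $\|\mathfrak{F}_{-\alpha}\overline{\psi}\|_{L^2(\mathbb{R}^n)}=\|\psi\|_{L^2(\mathbb{R}^n)}$.

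I do not anticipate any deep analytical obstacle. The care required is essentially bookkeeping: tracking the phases and the constant $C_\alpha$, and handling the complement $\{\boldsymbol\xi:|\boldsymbol\xi|_p=0\}$ excluded from $\mathbb{R}_0^n$ in the substitution; the latter is a Lebesgue-null subset of $\mathbb{R}^n$, so replacing the outer integral over $\mathbb{R}^n$ by one over $\mathbb{R}_0^n$ does not change its value. Conclusion of the biconditional then follows immediately from the non-vanishing of $\|\psi\|_{L^2(\mathbb{R}^n)}$.
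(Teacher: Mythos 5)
Your proposal is correct and follows essentially the same route as the paper: the displayed computation immediately preceding the theorem is exactly the chain you describe (inner product rewritten as a fractional convolution, Parseval plus Corollary \ref{P1corollary1.2} applied to the $\boldsymbol b$-integral, Fubini--Tonelli and the substitution $\boldsymbol u=\boldsymbol a\boldsymbol\xi$, then $\|\mathfrak{F}_{-\alpha}\overline{\psi}\|_{L^2(\mathbb{R}^n)}=\|\psi\|_{L^2(\mathbb{R}^n)}$), after which the biconditional follows from $0<\|\psi\|^2_{L^2(\mathbb{R}^n)}<\infty$. Your explicit remarks on justifying Tonelli by non-negativity and on the null set $\{\boldsymbol\xi:|\boldsymbol\xi|_p=0\}$ are welcome details the paper leaves implicit.
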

\section{Continuous Fractional Wavelet Transform(CFrWT)}
\begin{defn}
Let $f$ be a complex valued measurable function on $\mathbb{R}^n.$ Then the CFrWT of $f$  with respect to the wavelet  $\psi$, denoted by $W_{\psi,\alpha}f$, is defined by
$$(W_{\psi,\alpha}f)(\textbf{\em b},\boldsymbol a)=\int_{\mathbb{R}^n}f(\boldsymbol t)\overline{\psi_{\textbf{\em a},\boldsymbol b,\alpha}(\boldsymbol t)}d\boldsymbol t, (\textbf{\em b},\boldsymbol a)\in \mathbb{R}^n\times\mathbb{R}_{0}^n,$$
provided the integral on the right exists.\\
Note that the CFrWT reduces to the classical wavelet transform in $n$-dimensions if $\alpha=\frac{\pi}{2},$ i.e., $W_{\psi,\frac{\pi}{2}}=W_{\psi}.$\\
\end{defn}
\begin{theorem}\label{P1theo2.1}
Let $\psi$ be a wavelet and $f\in L^2(\mathbb{R}^n)$, then $W_{\psi,\alpha}f\in L^2\left(\mathbb{R}^n\times\mathbb{R}_{0}^n,\frac{d\boldsymbol b d\textbf{\em a}}{|\textbf{\em a}|^2_{p}}\right).$
\end{theorem}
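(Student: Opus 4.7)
The plan is to mirror, with $f$ in place of the second $\psi$, the computation carried out immediately before Theorem \ref{P1theo1.6}. First I would rewrite
$$(W_{\psi,\alpha}f)(\boldsymbol b,\boldsymbol a)=\langle f,\psi_{\boldsymbol a,\boldsymbol b,\alpha}\rangle_{L^2(\mathbb{R}^n)}=\overline{\langle \psi_{\boldsymbol a,\boldsymbol b,\alpha},f\rangle_{L^2(\mathbb{R}^n)}},$$
and then, exactly as in that earlier derivation, express
$$\langle \psi_{\boldsymbol a,\boldsymbol b,\alpha},f\rangle_{L^2(\mathbb{R}^n)}=\left(\frac{1}{\sqrt{|\boldsymbol a|_p}}\psi\!\left(\tfrac{\cdot}{-\boldsymbol a}\right)\star_{-\alpha}\overline{f(\cdot)}\right)(\boldsymbol b),$$
so that for each fixed $\boldsymbol a\in\mathbb{R}^n_0$ the CFrWT is realized as a fractional convolution in the translation variable $\boldsymbol b$.

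Next I would compute the partial $L^2$-norm $\int_{\mathbb{R}^n}|(W_{\psi,\alpha}f)(\boldsymbol b,\boldsymbol a)|^2\,d\boldsymbol b$ by taking $\mathfrak{F}_{-\alpha}$ in $\boldsymbol b$ of the above convolution using Corollary \ref{P1corollary1.2} and then applying Parseval's identity from Theorem \ref{P1theo1.2}. This yields
$$\int_{\mathbb{R}^n}|(W_{\psi,\alpha}f)(\boldsymbol b,\boldsymbol a)|^2\,d\boldsymbol b=\frac{|\boldsymbol a|_p}{|C_\alpha|^2}\int_{\mathbb{R}^n}|(\mathfrak{F}_{-\alpha}\bar f)(\boldsymbol\xi)|^2\left|\bigl(\mathfrak{F}_\alpha\{e^{-\frac{i}{2}\|\cdot\|^2\cot\alpha}\psi(\cdot)\}\bigr)(\boldsymbol a\boldsymbol\xi)\right|^2 d\boldsymbol\xi.$$
Dividing by $|\boldsymbol a|_p^2$, integrating over $\boldsymbol a\in\mathbb{R}^n_0$, swapping the order of integration by Fubini, and making the substitution $\boldsymbol u=\boldsymbol a\boldsymbol\xi$ (for fixed $\boldsymbol\xi\in\mathbb{R}^n_0$, this sends $d\boldsymbol a/|\boldsymbol a|_p$ to $d\boldsymbol u/|\boldsymbol u|_p$) would decouple the two integrals. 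The inner integral in $\boldsymbol u$ becomes $C_{\psi,\alpha}$ from (\ref{P1eqn5}), which is finite by the admissibility of $\psi$ and Theorem \ref{P1theo1.6}, while the outer integral in $\boldsymbol\xi$ equals $\|\mathfrak{F}_{-\alpha}\bar f\|^2_{L^2(\mathbb{R}^n)}=\|f\|^2_{L^2(\mathbb{R}^n)}$ again by Parseval. The conclusion will then read
$$\|W_{\psi,\alpha}f\|^2_{L^2(\mathbb{R}^n\times\mathbb{R}^n_0,\,d\boldsymbol b\,d\boldsymbol a/|\boldsymbol a|_p^2)}=\frac{C_{\psi,\alpha}}{|C_\alpha|^2}\|f\|^2_{L^2(\mathbb{R}^n)}<\infty.$$

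The main technical obstacle is that Corollary \ref{P1corollary1.2} is stated for $L^1$ arguments, whereas here $f$ lies only in $L^2(\mathbb{R}^n)$; I would handle this by first carrying out the entire calculation for $f$ in the dense subspace $L^1(\mathbb{R}^n)\cap L^2(\mathbb{R}^n)$ (or in $\mathcal{S}(\mathbb{R}^n)$), where the note following Corollary \ref{P1corollary1.2} together with Fubini are immediately applicable, and then extending to arbitrary $f\in L^2(\mathbb{R}^n)$ by density, using the just-derived Parseval-type bound to control the limit in $L^2(\mathbb{R}^n\times\mathbb{R}^n_0,\,d\boldsymbol b\,d\boldsymbol a/|\boldsymbol a|_p^2)$.
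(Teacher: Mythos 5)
Your proposal follows essentially the same route as the paper: write $(W_{\psi,\alpha}f)(\boldsymbol b,\boldsymbol a)$ as the fractional convolution $\frac{1}{\sqrt{|\boldsymbol a|_p}}\psi(\cdot/(-\boldsymbol a))\star_{-\alpha}\overline{f(\cdot)}$ in $\boldsymbol b$, apply Corollary \ref{P1corollary1.2} and Parseval, then Fubini and the substitution $\boldsymbol u=\boldsymbol a\boldsymbol\xi$ to obtain $\frac{C_{\psi,\alpha}}{|C_\alpha|^2}\|f\|^2_{L^2(\mathbb{R}^n)}$. Your closing density argument to justify using the convolution theorem for $f$ merely in $L^2(\mathbb{R}^n)$ is a point of care the paper glosses over, but it does not change the approach.
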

\begin{proof}
Clearly, for $(\boldsymbol b,\boldsymbol a)\in\mathbb{R}^n\times\mathbb{R}_{0}^n,$ we have
$$(W_{\psi,\alpha}f)(\boldsymbol b,\boldsymbol a)=\langle f,\psi_{\boldsymbol a,\boldsymbol b,\alpha}\rangle_{L^2(\mathbb{R}^n)}=\frac{1}{\sqrt{|\textbf{\em a}|_{p}}}\psi\left(\frac{.}{-\textbf{\em a}}\right)\star_{-\alpha}\overline{f(\cdot)}.$$
Therefore,
\begin{eqnarray*}
\int_{\mathbb{R}_0^n}\int_{\mathbb{R}^n}|(W_{\psi,\alpha}f)(\textbf{\em b},\boldsymbol a)|^2\frac{d\boldsymbol b d\boldsymbol a}{|{\boldsymbol a}|_{p}^2}&=&\int_{\mathbb{R}_0^n}\int_{\mathbb{R}^n}\left|\left(\frac{1}{\sqrt{|\textbf{\em a}|_{p}}}\psi\left(\frac{.}{-\textbf{\em a}}\right)\star_{-\alpha}\overline{f(\cdot)}\right)(\boldsymbol b)\right|^2\frac{d\boldsymbol b d\boldsymbol a}{|{\boldsymbol a}|_{p}^2}\notag\\
&=&\int_{\mathbb{R}_0^n}\int_{\mathbb{R}^n}\left|\frac{1}{\sqrt{|\textbf{\em a}|_{p}}}\frac{|\textbf{\em a}|_{p}}{C_{\alpha}}e^{-\frac{i}{2}\|\textbf{\em a}\boldsymbol\xi\|^2\cot\alpha}(\mathfrak{F}_{-\alpha}\bar{f})(\boldsymbol\xi)\left(\mathfrak{F}_\alpha\left\{e^{-\frac{i}{2}\|\cdot\|^2\cot\alpha}\psi(\cdot)\right\}\right)(\textbf{\em a}\boldsymbol\xi)\right|^2d\boldsymbol\xi\frac{d\textbf{\em a}}{|{\boldsymbol a}|_{p}^2}\notag\\
&=&\frac{1}{|C_{\alpha}|^2}\int_{\mathbb{R}_{0}^n}\int_{\mathbb{R}^n}|(\mathfrak{F}_{-\alpha}\bar{f})(\boldsymbol\xi)|^{2}\left|\left(\mathfrak{F}_\alpha\left\{e^{-\frac{i}{2}\|\cdot\|^2\cot\alpha}\psi(\cdot)\right\}\right)(\textbf{\em a}\boldsymbol\xi)\right|^2\frac{d\boldsymbol\xi d\textbf{\em a}}{|\textbf{\em a}|_{p}}.
\end{eqnarray*}
Using Fubini's theorem, we have
\begin{eqnarray}\label{P1eqn6}
\int_{\mathbb{R}_0^n}\int_{\mathbb{R}^n}|(W_{\psi,\alpha}f)(\textbf{\em b},\boldsymbol a)|^2\frac{d\boldsymbol b d\boldsymbol a}{|{\boldsymbol a}|_{p}^2}
&=&\frac{1}{|C_{\alpha}|^2}\int_{\mathbb{R}^n}|(\mathfrak{F}_{-\alpha}\bar{f})(\boldsymbol\xi)|^{2}\left(\int_{\mathbb{R}_{0}^n}\left|\left(\mathfrak{F}_\alpha\left\{e^{-\frac{i}{2}\|\cdot\|^2\cot\alpha}\psi(\cdot)\right\}\right)(\textbf{\em a}\boldsymbol\xi)\right|^2\frac{d\textbf{\em a}}{|\textbf{\em a}|_{p}}\right)d\boldsymbol\xi \notag\\
&=&\frac{1}{|C_{\alpha}|^2}\int_{\mathbb{R}^n}|(\mathfrak{F}_{-\alpha}\bar{f})(\boldsymbol\xi)|^{2}\left(\int_{\mathbb{R}_{0}^n}\left|\left(\mathfrak{F}_\alpha\left\{e^{-\frac{i}{2}\|\cdot\|^2\cot\alpha}\psi(\cdot)\right\}\right)(\boldsymbol u)\right|^2\frac{1}{|\boldsymbol u|_{p}} d\boldsymbol u\right)d\boldsymbol\xi \notag\\
&=&\frac{1}{|C_{\alpha}|^2}C_{\psi ,\alpha}\|f\|^2_{L^2(\mathbb{R}^n)}. 
\end{eqnarray}
Since $\psi$ is a wavelet and $f\in L^2(\mathbb{R}^n),$ from equation (\ref{P1eqn6}) it follows that $\displaystyle\int_{\mathbb{R}_0^n}\int_{\mathbb{R}^n}|(W_{\psi,\alpha}f)(\textbf{\em b},\boldsymbol a)|^2\frac{d\boldsymbol b d\boldsymbol a}{|{\boldsymbol a}|_{p}^2}<\infty.$\\
Hence, the theorem follows.
\end{proof} 
We now prove some lemmas that will be used in proving the inner product relation of CFrWT.
\begin{lemma}\label{P1lemma2.1}
(FrFT of $\psi_{\boldsymbol a,\boldsymbol b,\alpha}$) Let $\psi$ be a wavelet and $f\in L^2(\mathbb{R}^n)$, then
$$(\mathfrak{F}_{\alpha}\psi_{\textbf{\em a},\boldsymbol b,\alpha})(\boldsymbol\xi)=\sqrt{|\textbf{\em a}|_{p}} e^{\frac{i}{2}(\|\boldsymbol b\|^2+\|\boldsymbol\xi\|^2)\cot\alpha-i\langle\boldsymbol b,\boldsymbol\xi\rangle\csc\alpha-\frac{i}{2}\|\textbf{\em a}\boldsymbol\xi\|^2\cot\alpha}\left(\mathfrak{F_{\alpha}}\left\{e^{-\frac{i}{2}\|\cdot\|^2\cot\alpha}\psi(\cdot)\right\}\right)(\textbf{\em a}\boldsymbol\xi).$$
\end{lemma}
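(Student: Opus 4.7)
The plan is to compute $(\mathfrak{F}_{\alpha}\psi_{\boldsymbol a,\boldsymbol b,\alpha})(\boldsymbol\xi)$ directly from Definition \ref{P1defnK}, exploiting the fact that the quadratic modulation $e^{-\frac{i}{2}\|\boldsymbol t\|^2\cot\alpha}$ appearing in the definition of $\psi_{\boldsymbol a,\boldsymbol b,\alpha}$ is designed precisely to cancel the quadratic factor $e^{\frac{i}{2}\|\boldsymbol t\|^2\cot\alpha}$ inside the FrFT kernel $K_{\alpha}(\boldsymbol t,\boldsymbol\xi)$. After that cancellation the integrand becomes a pure chirp times a shifted-dilated copy of $\psi$, which is easily handled by a linear change of variables.

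Concretely, I would substitute the definition of $\psi_{\boldsymbol a,\boldsymbol b,\alpha}(\boldsymbol t)$ from \eqref{P1eqnC} and of $K_\alpha(\boldsymbol t,\boldsymbol\xi)$ into
\[
(\mathfrak{F}_\alpha\psi_{\boldsymbol a,\boldsymbol b,\alpha})(\boldsymbol\xi)=\int_{\mathbb{R}^n}\psi_{\boldsymbol a,\boldsymbol b,\alpha}(\boldsymbol t)K_\alpha(\boldsymbol t,\boldsymbol\xi)\,d\boldsymbol t.
\]
The $e^{\pm\frac{i}{2}\|\boldsymbol t\|^2\cot\alpha}$ terms cancel, the factors $e^{\frac{i}{2}\|\boldsymbol b\|^2\cot\alpha}$ and $e^{\frac{i}{2}\|\boldsymbol\xi\|^2\cot\alpha}$ pull out of the integral, leaving
\[
\frac{C_\alpha}{\sqrt{|\boldsymbol a|_p}}\,e^{\frac{i}{2}(\|\boldsymbol b\|^2+\|\boldsymbol\xi\|^2)\cot\alpha}\int_{\mathbb{R}^n}\psi\!\left(\frac{\boldsymbol t-\boldsymbol b}{\boldsymbol a}\right)e^{-i\langle\boldsymbol t,\boldsymbol\xi\rangle\csc\alpha}\,d\boldsymbol t.
\]
Next, I make the substitution $\boldsymbol u=(\boldsymbol t-\boldsymbol b)/\boldsymbol a$, so that $d\boldsymbol t=|\boldsymbol a|_p\,d\boldsymbol u$ and $\langle\boldsymbol t,\boldsymbol\xi\rangle=\langle\boldsymbol a\boldsymbol u,\boldsymbol\xi\rangle+\langle\boldsymbol b,\boldsymbol\xi\rangle=\langle\boldsymbol u,\boldsymbol a\boldsymbol\xi\rangle+\langle\boldsymbol b,\boldsymbol\xi\rangle$. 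This splits off the factor $e^{-i\langle\boldsymbol b,\boldsymbol\xi\rangle\csc\alpha}$ and reduces the integral to
\[
C_\alpha\sqrt{|\boldsymbol a|_p}\,e^{\frac{i}{2}(\|\boldsymbol b\|^2+\|\boldsymbol\xi\|^2)\cot\alpha-i\langle\boldsymbol b,\boldsymbol\xi\rangle\csc\alpha}\int_{\mathbb{R}^n}\psi(\boldsymbol u)\,e^{-i\langle\boldsymbol u,\boldsymbol a\boldsymbol\xi\rangle\csc\alpha}\,d\boldsymbol u.
\]

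To recognize the remaining integral, I separately compute $(\mathfrak{F}_\alpha\{e^{-\frac{i}{2}\|\cdot\|^2\cot\alpha}\psi(\cdot)\})(\boldsymbol a\boldsymbol\xi)$ from the definition; again the quadratic-in-$\boldsymbol u$ exponents cancel and one obtains
\[
\int_{\mathbb{R}^n}\psi(\boldsymbol u)\,e^{-i\langle\boldsymbol u,\boldsymbol a\boldsymbol\xi\rangle\csc\alpha}\,d\boldsymbol u=\frac{1}{C_\alpha}\,e^{-\frac{i}{2}\|\boldsymbol a\boldsymbol\xi\|^2\cot\alpha}\left(\mathfrak{F}_\alpha\left\{e^{-\frac{i}{2}\|\cdot\|^2\cot\alpha}\psi(\cdot)\right\}\right)(\boldsymbol a\boldsymbol\xi).
\]
Substituting this identity back into the displayed expression gives the claimed formula, with the factor $C_\alpha$ cancelling against $1/C_\alpha$ and the $-\frac{i}{2}\|\boldsymbol a\boldsymbol\xi\|^2\cot\alpha$ term joining the other exponents.

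There is no serious obstacle here; the entire proof is a careful bookkeeping of chirp phases together with one affine change of variables. The only point that deserves comment is that Definition \ref{P1defnK} requires $\alpha\neq m\pi$ for the explicit chirp form of $K_\alpha$; the identity extends to the degenerate cases $\alpha=m\pi$ by continuity in $\alpha$ (property (K4)), or alternatively both sides reduce, via the delta-function form of the kernel, to the obvious identity for the classical wavelet family.
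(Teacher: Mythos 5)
Your proposal is correct and follows essentially the same route as the paper's proof: cancel the chirp in $\psi_{\boldsymbol a,\boldsymbol b,\alpha}$ against the quadratic phase of $K_\alpha$, change variables $\boldsymbol u=(\boldsymbol t-\boldsymbol b)/\boldsymbol a$, and reassemble the remaining integral as $\left(\mathfrak{F}_\alpha\left\{e^{-\frac{i}{2}\|\cdot\|^2\cot\alpha}\psi(\cdot)\right\}\right)(\boldsymbol a\boldsymbol\xi)$ up to the phase $e^{-\frac{i}{2}\|\boldsymbol a\boldsymbol\xi\|^2\cot\alpha}$. The only (harmless) difference is that you verify the last identification by a separate computation and back-substitution rather than multiplying and dividing inside the integral to exhibit $K_\alpha(\boldsymbol u,\boldsymbol a\boldsymbol\xi)$, and you add a remark on the degenerate cases $\alpha=m\pi$ which the paper omits.
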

\begin{proof}
For $\boldsymbol\xi\in\mathbb{R}^n$, we have
\begin{eqnarray*}
(\mathfrak{F}_{\alpha}\psi_{\textbf{\em a},\boldsymbol b,\alpha})(\boldsymbol\xi)&=&\int_{\mathbb{R}^n}\psi_{\textbf{\em a},\boldsymbol b,\alpha}K_{\alpha}(\boldsymbol t,\boldsymbol\xi)d\boldsymbol t\\
&=&\int_{\mathbb{R}^n}\frac{C_{\alpha}}{\sqrt{|\textbf{\em a}|_{p}}}\psi\left(\frac{\boldsymbol t-\boldsymbol b}{\textbf{\em a}}\right)e^{\frac{i}{2}(\|\boldsymbol b\|^2+\|\boldsymbol\xi\|^2)\cot\alpha-i\langle\boldsymbol t,\boldsymbol\xi\rangle\csc\alpha}d\boldsymbol t.\\
\end{eqnarray*}
Putting $\frac{\boldsymbol t-\boldsymbol b}{\textbf{\em a}}=\boldsymbol u,$ we have
\begin{eqnarray*}
(\mathfrak{F}_{\alpha}\psi_{\textbf{\em a},\boldsymbol b,\alpha})(\boldsymbol\xi)&=&\int_{\mathbb{R}^n}C_{\alpha}\sqrt{|\textbf{\em a}|_{p}}\psi(\boldsymbol u)e^{\frac{i}{2}(\|\boldsymbol b\|^2+\|\boldsymbol\xi\|^2)\cot\alpha-i\langle\boldsymbol b+\textbf{\em a}\boldsymbol u,\boldsymbol\xi\rangle\csc\alpha}d\boldsymbol u\\
&=&\sqrt{|\textbf{\em a}|_{p}}e^{\frac{i}{2}(\|\boldsymbol b\|^2+\|\boldsymbol\xi\|^2)\cot\alpha-i\langle\boldsymbol b,\boldsymbol\xi\rangle\csc\alpha}\int_{\mathbb{R}^n}C_{\alpha}\psi(\boldsymbol u)e^{-i\langle\textbf{\em a}\boldsymbol u,\boldsymbol\xi\rangle\csc\alpha}d\boldsymbol u\\
&=&\sqrt{|\textbf{\em a}|_{p}}e^{\frac{i}{2}(\|\boldsymbol b\|^2+\|\boldsymbol\xi\|^2)\cot\alpha-i\langle\boldsymbol b,\boldsymbol\xi\rangle\csc\alpha-\frac{i}{2}\|\textbf{\em a}\boldsymbol\xi\|^2\cot\alpha}\int_{\mathbb{R}^n}e^{-\frac{i}{2}\|\boldsymbol u\|^2\cot\alpha}\psi(\boldsymbol u)K_{\alpha}(\boldsymbol u,\textbf{\em a}\boldsymbol\xi)d\boldsymbol u\\
&=&\sqrt{|\textbf{\em a}|_{p}}e^{\frac{i}{2}(\|\boldsymbol b\|^2+\|\boldsymbol\xi\|^2)\cot\alpha-i\langle\boldsymbol b,\boldsymbol\xi\rangle\csc\alpha-\frac{i}{2}\|\textbf{\em a}\boldsymbol\xi\|^2\cot\alpha}\left(\mathfrak{F}_{\alpha}\left\{e^{-\frac{i}{2}\|\cdot\|^2\cot\alpha}\psi(\cdot)\right\}\right)(\textbf{\em a}\boldsymbol\xi).\\
\end{eqnarray*}
This proves the lemma.
\end{proof}
\begin{lemma}\label{P1lemma2.2}
\big(FrFT of $\overline{\psi_{\boldsymbol a,\boldsymbol b,\alpha}}$\big) Let $\psi$ be a wavelet and $f\in L^2(\mathbb{R}^n)$, then
$$\left(\mathfrak{F}_{\alpha,\boldsymbol b}\left\{\overline{\psi_{\textbf{\em a},\boldsymbol b,\alpha}(\boldsymbol t)}\right\}\right)(\boldsymbol\xi)=\sqrt{|\textbf{\em a}|_{p}}\frac{C_{\alpha}}{\overline{C_{\alpha}}} e^{\frac{i}{2}(\|\boldsymbol t\|^2+\|\boldsymbol\xi\|^2)\cot\alpha-i\langle\boldsymbol t,\boldsymbol\xi\rangle\csc\alpha+\frac{i}{2}\|\textbf{\em a}\boldsymbol\xi\|^2\cot\alpha}\overline{\left(\mathfrak{F_{\alpha}}\left\{e^{-\frac{i}{2}\|\cdot\|^2\cot\alpha}\psi(\cdot)\right\}\right)(\textbf{\em a}\boldsymbol\xi)},$$
where $\mathfrak{F}_{\alpha,\boldsymbol b}$ denotes the FrFT to be taken with respect  to the argument $\boldsymbol b.$ 
\end{lemma}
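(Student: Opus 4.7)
The strategy is a direct computation, entirely parallel to that of Lemma \ref{P1lemma2.1} but keeping $\boldsymbol t$ fixed and integrating the complex conjugate against $K_\alpha(\boldsymbol b,\boldsymbol\xi)$ in the variable $\boldsymbol b$. I would start by writing
\[
\left(\mathfrak{F}_{\alpha,\boldsymbol b}\{\overline{\psi_{\boldsymbol a,\boldsymbol b,\alpha}(\boldsymbol t)}\}\right)(\boldsymbol\xi)
=\int_{\mathbb{R}^n}\frac{1}{\sqrt{|\boldsymbol a|_{p}}}\,\overline{\psi\!\left(\tfrac{\boldsymbol t-\boldsymbol b}{\boldsymbol a}\right)}\,e^{\frac{i}{2}(\|\boldsymbol t\|^2-\|\boldsymbol b\|^2)\cot\alpha}\,C_\alpha\,e^{\frac{i}{2}(\|\boldsymbol b\|^2+\|\boldsymbol\xi\|^2)\cot\alpha-i\langle\boldsymbol b,\boldsymbol\xi\rangle\csc\alpha}\,d\boldsymbol b.
\]
The first key observation is that the $\|\boldsymbol b\|^2\cot\alpha$ terms coming from the conjugated wavelet and from $K_\alpha$ cancel, leaving a pure Fourier-type exponential $e^{-i\langle\boldsymbol b,\boldsymbol\xi\rangle\csc\alpha}$ in $\boldsymbol b$.

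Next I would perform the substitution $\boldsymbol u=(\boldsymbol t-\boldsymbol b)/\boldsymbol a$, so that $\boldsymbol b=\boldsymbol t-\boldsymbol a\boldsymbol u$ and $d\boldsymbol b=|\boldsymbol a|_{p}\,d\boldsymbol u$. This turns the integral into
\[
C_\alpha\sqrt{|\boldsymbol a|_{p}}\,e^{\frac{i}{2}(\|\boldsymbol t\|^2+\|\boldsymbol\xi\|^2)\cot\alpha-i\langle\boldsymbol t,\boldsymbol\xi\rangle\csc\alpha}\int_{\mathbb{R}^n}\overline{\psi(\boldsymbol u)}\,e^{i\langle\boldsymbol u,\boldsymbol a\boldsymbol\xi\rangle\csc\alpha}\,d\boldsymbol u.
\]
At this point the dependence on $\boldsymbol t$ has been cleanly extracted and the remaining integral depends only on $\boldsymbol a\boldsymbol\xi$, as expected from the claimed formula.

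To finish, I need to recognize the remaining integral as $\overline{\bigl(\mathfrak{F}_\alpha\{e^{-\frac{i}{2}\|\cdot\|^2\cot\alpha}\psi(\cdot)\}\bigr)(\boldsymbol a\boldsymbol\xi)}$. Expanding that latter FrFT by definition and taking the conjugate produces $\overline{C_\alpha}\,e^{-\frac{i}{2}\|\boldsymbol a\boldsymbol\xi\|^2\cot\alpha}\int_{\mathbb{R}^n}\overline{\psi(\boldsymbol u)}e^{i\langle\boldsymbol u,\boldsymbol a\boldsymbol\xi\rangle\csc\alpha}\,d\boldsymbol u$, where the $e^{\pm\frac{i}{2}\|\boldsymbol u\|^2\cot\alpha}$ factors cancel between the weight and the kernel. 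Matching this with the expression above, I multiply and divide by $\overline{C_\alpha}$ and insert a compensating factor $e^{\frac{i}{2}\|\boldsymbol a\boldsymbol\xi\|^2\cot\alpha}e^{-\frac{i}{2}\|\boldsymbol a\boldsymbol\xi\|^2\cot\alpha}$, yielding exactly the right-hand side of the lemma.

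I expect the main obstacle to be purely bookkeeping: tracking the sign on $\cot\alpha$ and $\csc\alpha$, keeping straight which factors of $C_\alpha$ versus $\overline{C_\alpha}$ arise, and making sure that after the substitution the phase $\frac{i}{2}\|\boldsymbol a\boldsymbol\xi\|^2\cot\alpha$ is properly accounted for so that the right-hand side emerges with the stated prefactor $\frac{C_\alpha}{\overline{C_\alpha}}$. No analytic difficulty is anticipated beyond Fubini being replaced here by a genuine Lebesgue integral in $\boldsymbol b$, which is permitted because the integrand is absolutely integrable for each fixed $\boldsymbol t$ whenever $\psi\in L^1\cap L^2$; if one wishes to stay in $L^2$ only, the identity should be interpreted in the $L^2(\mathbb{R}^n_{\boldsymbol b})$ sense, exactly as in Definition \ref{P1defn1.1}.
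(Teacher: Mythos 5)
Your proposal is correct and follows essentially the same route as the paper's proof: expand the definition of $\mathfrak{F}_{\alpha,\boldsymbol b}$ applied to $\overline{\psi_{\boldsymbol a,\boldsymbol b,\alpha}(\boldsymbol t)}$, observe the cancellation of the $\|\boldsymbol b\|^2\cot\alpha$ phases, substitute $\boldsymbol u=(\boldsymbol t-\boldsymbol b)/\boldsymbol a$, and identify the remaining integral with $\overline{\bigl(\mathfrak{F}_\alpha\{e^{-\frac{i}{2}\|\cdot\|^2\cot\alpha}\psi(\cdot)\}\bigr)(\boldsymbol a\boldsymbol\xi)}$ up to the factor $\overline{C_\alpha}\,e^{-\frac{i}{2}\|\boldsymbol a\boldsymbol\xi\|^2\cot\alpha}$, which produces the stated prefactor $C_\alpha/\overline{C_\alpha}$. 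The only cosmetic difference is that the paper carries the conjugation around the entire $\boldsymbol b$-integral while you keep it on $\psi$ alone, and your closing remark about interpreting the integral in the $L^2$ sense addresses an integrability point the paper leaves implicit.
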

\begin{proof}
Using the definition of $\mathfrak{F}_{\alpha,\boldsymbol b}$ and $\psi_{\textbf{\em a},\boldsymbol b,\alpha}$, we have
\begin{eqnarray}\label{P1eqn7}
\left(\mathfrak{F}_{\alpha,\boldsymbol b}\left\{\overline{\psi_{\textbf{\em a},\boldsymbol b,\alpha}(\boldsymbol t)}\right\}\right)(\boldsymbol\xi)&=&\frac{C_{\alpha}}{\sqrt{|\textbf{\em a}|_{p}}}e^{\frac{i}{2}(\|\boldsymbol t\|^2+\|\boldsymbol\xi\|^2)\cot\alpha}\overline{\int_{\mathbb{R}^n}\psi\left(\frac{\boldsymbol t-\boldsymbol b}{\textbf{\em a}}\right)e^{i\langle\boldsymbol b,\boldsymbol\xi\rangle\csc\alpha} d\boldsymbol b}. 
\end{eqnarray}
Putting $\frac{\boldsymbol t-\boldsymbol b}{\textbf{\em a}}=\boldsymbol u$ in equation (\ref{P1eqn7}), we have
\begin{eqnarray*}
\left(\mathfrak{F}_{\alpha,\boldsymbol b}\left\{\overline{\psi_{\textbf{\em a},\boldsymbol b,\alpha}(\boldsymbol t)}\right\}\right)(\boldsymbol\xi)&=&\sqrt{|\textbf{\em a}|_{p}}C_{\alpha} e^{\frac{i}{2}(\|\boldsymbol t\|^2+\|\boldsymbol\xi\|^2)\cot\alpha}\overline{\int_{\mathbb{R}^n}\psi(\boldsymbol u)e^{i\langle\boldsymbol t-\textbf{\em a}\boldsymbol u,\boldsymbol\xi\rangle\csc\alpha} d\boldsymbol u}\\
&=& \sqrt{|\textbf{\em a}|_{p}}C_{\alpha} e^{\frac{i}{2}(\|\boldsymbol t\|^2+\|\boldsymbol\xi\|^2)\cot\alpha-i\langle\boldsymbol t,\boldsymbol\xi\rangle\csc\alpha }\overline{\int_{\mathbb{R}^n}\psi(\boldsymbol u)e^{-i\langle\boldsymbol u,\textbf{\em a}\boldsymbol\xi\rangle\csc\alpha} d\boldsymbol u}\\
&=&\sqrt{|\textbf{\em a}|_{p}}\frac{C_{\alpha}}{\overline{C_{\alpha}}} e^{\frac{i}{2}(\|\boldsymbol t\|^2+\|\boldsymbol\xi\|^2)\cot\alpha-i\langle\boldsymbol t,\boldsymbol\xi\rangle\csc\alpha }\overline{\int_{\mathbb{R}^n}\psi(\boldsymbol u)K_{\alpha}(\boldsymbol t,\textbf{\em a}\boldsymbol\xi)e^{-\frac{i}{2}(\|\boldsymbol u\|^2+\|\textbf{\em a}\boldsymbol\xi\|^2)\cot\alpha} d\boldsymbol u}\\
&=&\sqrt{|\textbf{\em a}|_{p}}\frac{C_{\alpha}}{\overline{C_{\alpha}}} e^{\frac{i}{2}(\|\boldsymbol t\|^2+\|\boldsymbol\xi\|^2)\cot\alpha-i\langle\boldsymbol t,\boldsymbol\xi\rangle\csc\alpha+\frac{i}{2}\|\textbf{\em a}\boldsymbol\xi\|^2\cot\alpha }\overline{\int_{\mathbb{R}^n}\psi(\boldsymbol u)K_{\alpha}(\boldsymbol t,\textbf{\em a}\boldsymbol\xi)e^{-\frac{i}{2}(\|\boldsymbol u\|^2+\|\textbf{\em a}\boldsymbol\xi\|^2)\cot\alpha} d\boldsymbol u}\\
&=&\sqrt{|\textbf{\em a}|_{p}}\frac{C_{\alpha}}{\overline{C_{\alpha}}} e^{\frac{i}{2}(\|\boldsymbol t\|^2+\|\boldsymbol\xi\|^2)\cot\alpha-i\langle\boldsymbol t,\boldsymbol\xi\rangle\csc\alpha+\frac{i}{2}\|\textbf{\em a}\boldsymbol\xi\|^2\cot\alpha }\overline{\left(\mathfrak{F}_{\alpha}\left\{e^{-\frac{i}{2}\|\cdot\|^2\cot\alpha}\psi(\cdot)\right\}\right)(\textbf{\em a}\boldsymbol \xi)}.
\end{eqnarray*}
This completes the proof.
\end{proof} 
\begin{lemma}\label{P1lemma2.3}
(FrFT of $W_{\psi,\alpha}(\cdot,\boldsymbol a)$) Let $\psi$ be a wavelet and $f\in L^2(\mathbb{R}^n)$, then
$$\left(\mathfrak{F}_{\alpha,\boldsymbol b}\left\{(W_{\psi,\alpha}f)(\textbf{\em b},\boldsymbol a)\right\}\right)(\boldsymbol\xi)=\frac{\sqrt{|\textbf{\em a}|_{p}}}{C_{\alpha}}e^{\frac{i}{2}\|\textbf{\em a}\boldsymbol\xi\|^2\cot\alpha }\overline{\left(\mathfrak{F}_{\alpha}\left\{e^{-\frac{i}{2}\|\cdot\|^2\cot\alpha}\psi(\cdot)\right\}\right)(\textbf{\em a}\boldsymbol \xi)}(\mathfrak{F}_{\alpha}f)(\boldsymbol\xi),$$
where $\mathfrak{F}_{\alpha,\boldsymbol b}$ denotes the FrFT to be taken with respect  to the argument $\boldsymbol b.$ 
\end{lemma}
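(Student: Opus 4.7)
The plan is to unfold the definition of $W_{\psi,\alpha}f$ as an integral in the time variable, push the FrFT in the argument $\boldsymbol b$ inside by Fubini's theorem, and then apply Lemma \ref{P1lemma2.2} to collapse the resulting inner FrFT in $\boldsymbol b$.

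Concretely, starting from
$$(W_{\psi,\alpha}f)(\boldsymbol b,\boldsymbol a)=\int_{\mathbb{R}^n}f(\boldsymbol t)\overline{\psi_{\boldsymbol a,\boldsymbol b,\alpha}(\boldsymbol t)}\,d\boldsymbol t,$$
I would multiply by $K_{\alpha}(\boldsymbol b,\boldsymbol\xi)$, integrate in $\boldsymbol b$, and swap the order of integration so that the inner $\boldsymbol b$-integral acts on $\overline{\psi_{\boldsymbol a,\boldsymbol b,\alpha}(\boldsymbol t)}$ alone. That inner integral is then exactly $\left(\mathfrak{F}_{\alpha,\boldsymbol b}\{\overline{\psi_{\boldsymbol a,\boldsymbol b,\alpha}(\boldsymbol t)}\}\right)(\boldsymbol\xi)$, for which Lemma \ref{P1lemma2.2} supplies a closed form. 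After substituting that formula, the $\boldsymbol t$-dependence inside the outer integral reduces to $f(\boldsymbol t)\,e^{\frac{i}{2}\|\boldsymbol t\|^2\cot\alpha-i\langle\boldsymbol t,\boldsymbol\xi\rangle\csc\alpha}$, and the residual $\boldsymbol t$-integral is recognized as
$$\int_{\mathbb{R}^n} f(\boldsymbol t)\, e^{\frac{i}{2}\|\boldsymbol t\|^2\cot\alpha-i\langle\boldsymbol t,\boldsymbol\xi\rangle\csc\alpha}\,d\boldsymbol t=\frac{1}{C_\alpha}e^{-\frac{i}{2}\|\boldsymbol\xi\|^2\cot\alpha}(\mathfrak{F}_\alpha f)(\boldsymbol\xi),$$
after absorbing the Gaussian phase in $\|\boldsymbol\xi\|^2$ to reconstitute $K_\alpha(\boldsymbol t,\boldsymbol\xi)/C_\alpha$. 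Combining everything, the $e^{\frac{i}{2}\|\boldsymbol\xi\|^2\cot\alpha}$ inherited from Lemma \ref{P1lemma2.2} cancels against the $e^{-\frac{i}{2}\|\boldsymbol\xi\|^2\cot\alpha}$ just produced, leaving only $e^{\frac{i}{2}\|\boldsymbol a\boldsymbol\xi\|^2\cot\alpha}$, and the multiplicative constants combine to give the prefactor stated.

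The main obstacle is the justification of the swap of integrations, since $|K_\alpha(\boldsymbol b,\boldsymbol\xi)|=|C_\alpha|$ is independent of $\boldsymbol b$, so the double integrand is not absolutely integrable for a generic $f\in L^2(\mathbb{R}^n)$. I would dispose of this by first establishing the identity for $f,\psi\in\mathcal{S}(\mathbb{R}^n)$, where the rapid decay of $\psi((\boldsymbol t-\boldsymbol b)/\boldsymbol a)$ in $\boldsymbol b$ for each fixed $\boldsymbol t$ makes Fubini routine, and then extending to $f\in L^2(\mathbb{R}^n)$ by density, using the $L^2$-boundedness estimate of Theorem \ref{P1theo2.1} on the left-hand side together with the Parseval identity of Theorem \ref{P1theo1.2} (applied in the $\boldsymbol b$-variable) to pass to the limit in both arguments.
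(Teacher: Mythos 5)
Your route is genuinely different from the paper's. The paper never touches Fubini: it writes $(W_{\psi,\alpha}f)(\boldsymbol b,\boldsymbol a)=\langle \mathfrak{F}_\alpha f,\mathfrak{F}_\alpha\psi_{\boldsymbol a,\boldsymbol b,\alpha}\rangle_{L^2(\mathbb{R}^n)}$ by Parseval, substitutes the closed form of $\mathfrak{F}_\alpha\psi_{\boldsymbol a,\boldsymbol b,\alpha}$ from Lemma \ref{P1lemma2.1} (not Lemma \ref{P1lemma2.2}), recognizes the resulting $\boldsymbol\xi$-integral as $\frac{\sqrt{|\boldsymbol a|_p}}{\overline{C_\alpha}}\bigl(\mathfrak{F}_{-\alpha}\{e^{\frac{i}{2}\|\boldsymbol a\boldsymbol\xi\|^2\cot\alpha}F(\boldsymbol\xi)\}\bigr)(\boldsymbol b)$, and then applies $\mathfrak{F}_{\alpha,\boldsymbol b}$ using the inversion $\mathfrak{F}_\alpha\mathfrak{F}_{-\alpha}=\mathrm{id}$. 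What the paper's route buys is that the only analytic input is the $L^2$ inversion theorem; what your route buys is a completely elementary computation, at the cost of a Fubini justification which you correctly identify as the crux and handle by a density argument. Your computational core is right: pushing $K_\alpha(\boldsymbol b,\boldsymbol\xi)$ inside, invoking Lemma \ref{P1lemma2.2}, and folding the residual $\boldsymbol t$-integral into $\frac{1}{C_\alpha}e^{-\frac{i}{2}\|\boldsymbol\xi\|^2\cot\alpha}(\mathfrak{F}_\alpha f)(\boldsymbol\xi)$ does reproduce the identity, with the $e^{\pm\frac{i}{2}\|\boldsymbol\xi\|^2\cot\alpha}$ cancellation exactly as you say.

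Two caveats. First, track the constant: Lemma \ref{P1lemma2.2} contributes $\sqrt{|\boldsymbol a|_p}\,C_\alpha/\overline{C_\alpha}$ and the $\boldsymbol t$-integral contributes $1/C_\alpha$, so your prefactor is $\sqrt{|\boldsymbol a|_p}/\overline{C_\alpha}$, not the $\sqrt{|\boldsymbol a|_p}/C_\alpha$ printed in the statement; this agrees with the paper's own equation (\ref{P1eqn10}), so the statement carries a typo ($C_\alpha$ for $\overline{C_\alpha}$) and your claim that the constants "combine to give the prefactor stated" is, strictly, false as written. Second, your density step only extends in $f$; the Fubini step also needs $\psi\in L^1(\mathbb{R}^n)$ (or $\mathcal{S}(\mathbb{R}^n)$), whereas the lemma assumes only an admissible $\psi\in L^2(\mathbb{R}^n)$. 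Passing to the limit in $\psi$ is less routine than in $f$: for fixed $\boldsymbol a$ one controls $\|W_{\psi_k,\alpha}f(\cdot,\boldsymbol a)-W_{\psi,\alpha}f(\cdot,\boldsymbol a)\|_{L^2(d\boldsymbol b)}$ by an integral of the form $\int|(\mathfrak{F}_{-\alpha}\bar f)(\boldsymbol\xi)|^2\,|(\mathfrak{F}_\alpha\{e^{-\frac{i}{2}\|\cdot\|^2\cot\alpha}(\psi_k-\psi)\})(\boldsymbol a\boldsymbol\xi)|^2\,d\boldsymbol\xi$, a product of two merely-$L^2$ factors, so the limits in $f$ and $\psi$ cannot both be taken with one factor bounded; you would need to order the approximations carefully or work with the identity in a weak (tested against $L^2$) form. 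The paper sidesteps this entirely by staying inside $L^2$ Parseval identities, which is the one place its argument is genuinely cleaner than yours.
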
 
\begin{proof}
We have
$$(W_{\psi,\alpha}f)(\textbf{\em b},\boldsymbol a)=\langle f,\psi_{\textbf{\em a},\boldsymbol b,\alpha}\rangle_{L^2(\mathbb{R}^n)}.$$
Using Parseval's formula, we have
\begin{eqnarray*}
(W_{\psi,\alpha}f)(\textbf{\em b},\boldsymbol a)&=&\langle\mathfrak{F}_{\alpha}f,\mathfrak{F}_{\alpha}\psi_{\textbf{\em a},\boldsymbol b,\alpha}\rangle_{L^2(\mathbb{R}^n)}\\
&=&
\sqrt{|\textbf{\em a}|_{p}}\int_{\mathbb{R}^n}e^{-\frac{i}{2}(\|\boldsymbol b\|^2+\|\boldsymbol\xi\|^2)\cot\alpha+i\langle\boldsymbol b,\boldsymbol\xi\rangle\csc\alpha+\frac{i}{2}\|\textbf{\em a}\boldsymbol\xi\|^2\cot\alpha }\overline{\left(\mathfrak{F}_{\alpha}\left\{e^{-\frac{i}{2}\|.\|^2\cot\alpha}\psi(.)\right\}\right)(\textbf{\em a}\boldsymbol \xi)}(\mathfrak{F}_{\alpha}f)(\boldsymbol\xi)d\boldsymbol\xi.
\end{eqnarray*}
Let 
\begin{equation}\label{P1eqn8}
F(\boldsymbol\xi)=\overline{\left(\mathfrak{F}_{\alpha}\left\{e^{-\frac{i}{2}\|\cdot\|^2\cot\alpha}\psi(\cdot)\right\}\right)(\textbf{\em a}\boldsymbol \xi)}(\mathfrak{F}_{\alpha}f)(\boldsymbol\xi).
\end{equation}
Then, we have
\begin{eqnarray}\label{P1eqn9}
(W_{\psi,\alpha}f)(\textbf{\em b},\boldsymbol a)&=&\frac{\sqrt{|\textbf{\em a}|_{p}}}{\overline{C_{\alpha}}}\int_{\mathbb{R}^n}K_{-\alpha}(\boldsymbol\xi,\boldsymbol b)e^{\frac{i}{2}\|\textbf{\em a}\boldsymbol\xi\|^2\cot\alpha}F(\boldsymbol\xi)d\boldsymbol\xi\notag\\
&=&\frac{\sqrt{|\textbf{\em a}|_{p}}}{\overline{C_{\alpha}}}\left(\mathfrak{F}_{-\alpha}\left\{e^{\frac{i}{2}\|\textbf{\em a}\boldsymbol\xi\|^2\cot\alpha}F(\boldsymbol\xi)\right\}\right)(\boldsymbol b).
\end{eqnarray}
From equation (\ref{P1eqn9}), we get
\begin{equation}\label{P1eqn10}
(\mathfrak{F}_{\alpha,\boldsymbol b}\{(W_{\psi,\alpha}f)(\textbf{\em b},\boldsymbol a)\})(\boldsymbol\xi)=\frac{\sqrt{|\textbf{\em a}|_{p}}}{\overline{C_{\alpha}}}e^{\frac{i}{2}\|\textbf{\em a}\boldsymbol\xi\|^2\cot\alpha}F(\boldsymbol\xi).
\end{equation}
Using equation (\ref{P1eqn8}) in equation (\ref{P1eqn10}), we get the required result.
\end{proof}
\subsection{Inner product relation for CFrWT involving two wavelets}
\begin{theorem}\label{P1theo2.2}
Let $\phi,\psi$ be two wavelets satisfying 
\begin{equation}\label{P1eqn13}
\int_{\mathbb{R}_{0}^n}\left|\left(\mathfrak{F}_\alpha\left\{e^{-\frac{i}{2}\|\cdot\|^2\cot\alpha}\phi(\cdot)\right\}\right)(\boldsymbol u)\right|\left|\left(\mathfrak{F}_\alpha\left\{e^{-\frac{i}{2}\|\cdot\|^2\cot\alpha}\psi(\cdot)\right\}\right)(\boldsymbol u)\right|\frac{1}{|\boldsymbol u|_{p}}d\boldsymbol u<\infty
\end{equation}
and $W_{\phi,\alpha}f ,\  W_{\psi,\alpha}g$ denote the CFrWT of $f$ and $g$ with respect to $\phi$ and $\psi$ respectively, then
$$\int_{\mathbb{R}_{0}^n}\int_{\mathbb{R}^n}(W_{\phi,\alpha}f)(\textbf{\em b},\boldsymbol a)\overline{(W_{\psi,\alpha}g)(\textbf{\em b},\boldsymbol a)}\frac{d\boldsymbol b d\textbf{\em a}}{|\textbf{\em a}|_{p}^2}=\frac{C_{\boldsymbol\phi,\boldsymbol\psi,\alpha}}{|C_{\alpha}|^2}\langle f,g\rangle_{L^2(\mathbb{R}^n)},$$
where 
\begin{equation}\label{P1eqn14}
C_{\phi,\psi,\alpha}=\int_{\mathbb{R}_{0}^n}\overline{\left(\mathfrak{F}_\alpha\left\{e^{-\frac{i}{2}\|\cdot\|^2\cot\alpha}\phi(\cdot)\right\}\right)(\boldsymbol u)}\left(\mathfrak{F}_\alpha\left\{e^{-\frac{i}{2}\|\cdot\|^2\cot\alpha}\psi(\cdot)\right\}\right)(\boldsymbol u)\frac{1}{|\boldsymbol u|_{p}}d\boldsymbol u.
\end{equation}
\end{theorem}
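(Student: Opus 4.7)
The plan is to reduce the left-hand side to an inner product of the $\boldsymbol b$-direction fractional Fourier transforms using Parseval's formula (FrFTP1), apply Lemma \ref{P1lemma2.3} to express those transforms in closed form, and then exchange the order of integration to uncover the constant $C_{\phi,\psi,\alpha}$.

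Concretely, I would first fix $\boldsymbol a \in \mathbb{R}_0^n$ and apply Parseval's identity in the variable $\boldsymbol b$ to rewrite
\[
\int_{\mathbb{R}^n}(W_{\phi,\alpha}f)(\boldsymbol b,\boldsymbol a)\overline{(W_{\psi,\alpha}g)(\boldsymbol b,\boldsymbol a)}\,d\boldsymbol b
=\bigl\langle \mathfrak{F}_{\alpha,\boldsymbol b}(W_{\phi,\alpha}f)(\cdot,\boldsymbol a),\ \mathfrak{F}_{\alpha,\boldsymbol b}(W_{\psi,\alpha}g)(\cdot,\boldsymbol a)\bigr\rangle_{L^2(\mathbb{R}^n)}.
\]
By Lemma \ref{P1lemma2.3}, each factor is $\frac{\sqrt{|\boldsymbol a|_p}}{C_\alpha}e^{\frac{i}{2}\|\boldsymbol a\boldsymbol\xi\|^2\cot\alpha}\,\overline{\Phi(\boldsymbol a\boldsymbol\xi)}\,(\mathfrak{F}_\alpha f)(\boldsymbol\xi)$ (with $\Phi,\Psi$ denoting the FrFTs of $e^{-\frac{i}{2}\|\cdot\|^2\cot\alpha}\phi,\psi$ respectively). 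Forming the product, the oscillatory factors $e^{\frac{i}{2}\|\boldsymbol a\boldsymbol\xi\|^2\cot\alpha}$ cancel against their conjugates and the constants combine into $|\boldsymbol a|_p/|C_\alpha|^2$, leaving
\[
\int_{\mathbb{R}^n}(W_{\phi,\alpha}f)(\boldsymbol b,\boldsymbol a)\overline{(W_{\psi,\alpha}g)(\boldsymbol b,\boldsymbol a)}\,d\boldsymbol b
=\frac{|\boldsymbol a|_p}{|C_\alpha|^2}\int_{\mathbb{R}^n}\overline{\Phi(\boldsymbol a\boldsymbol\xi)}\,\Psi(\boldsymbol a\boldsymbol\xi)\,(\mathfrak{F}_\alpha f)(\boldsymbol\xi)\,\overline{(\mathfrak{F}_\alpha g)(\boldsymbol\xi)}\,d\boldsymbol\xi.
\]

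Next I would divide by $|\boldsymbol a|_p^2$, integrate over $\boldsymbol a\in\mathbb{R}_0^n$, and invoke Fubini to swap the $\boldsymbol a$ and $\boldsymbol\xi$ integrals. In the inner $\boldsymbol a$-integral I perform the componentwise substitution $\boldsymbol u=\boldsymbol a\boldsymbol\xi$; since the Jacobian is $|\boldsymbol\xi|_p$ and $|\boldsymbol u|_p=|\boldsymbol a|_p|\boldsymbol\xi|_p$, the measure transforms as $\frac{d\boldsymbol a}{|\boldsymbol a|_p}=\frac{d\boldsymbol u}{|\boldsymbol u|_p}$, and the $\boldsymbol a$-integral becomes exactly $C_{\phi,\psi,\alpha}$, independent of $\boldsymbol\xi$. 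What remains is $\frac{C_{\phi,\psi,\alpha}}{|C_\alpha|^2}\langle \mathfrak{F}_\alpha f,\mathfrak{F}_\alpha g\rangle_{L^2(\mathbb{R}^n)}$, which by Parseval (FrFTP1) equals $\frac{C_{\phi,\psi,\alpha}}{|C_\alpha|^2}\langle f,g\rangle_{L^2(\mathbb{R}^n)}$.

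The main obstacle is justifying Fubini's theorem at the step of interchanging the $\boldsymbol a$ and $\boldsymbol\xi$ integrations. I would handle this by checking absolute integrability: after applying Cauchy--Schwarz in $\boldsymbol\xi$, the integral is bounded by $\|f\|_{L^2(\mathbb{R}^n)}\|g\|_{L^2(\mathbb{R}^n)}$ times $\int_{\mathbb{R}_0^n}|\overline{\Phi(\boldsymbol u)}\Psi(\boldsymbol u)|\frac{d\boldsymbol u}{|\boldsymbol u|_p}$ (via the same change of variables on the cross-term), and the latter is finite by hypothesis (\ref{P1eqn13}). Everything else is the bookkeeping of complex conjugates and the verification that the substitution $\boldsymbol u=\boldsymbol a\boldsymbol\xi$ is well-defined for a.e.\ $\boldsymbol\xi\in\mathbb{R}_0^n$, which is immediate since $(\mathfrak{F}_\alpha f)(\mathfrak{F}_\alpha g)$ contributes zero measure from the hyperplanes where any $\xi_i=0$.
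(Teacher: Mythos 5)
Your proposal is correct and follows essentially the same route as the paper: Parseval in the $\boldsymbol b$-variable, Lemma \ref{P1lemma2.3} for the closed form of $\mathfrak{F}_{\alpha,\boldsymbol b}$ of the two transforms, Fubini plus the substitution $\boldsymbol u=\boldsymbol a\boldsymbol\xi$ to extract $C_{\phi,\psi,\alpha}$, and a final application of Parseval. Your explicit justification of the Fubini step via Cauchy--Schwarz and hypothesis (\ref{P1eqn13}) is a welcome addition that the paper leaves implicit, and you also correctly write the inner $\boldsymbol a$-integral as $\int\overline{\Phi(\boldsymbol u)}\Psi(\boldsymbol u)\frac{d\boldsymbol u}{|\boldsymbol u|_p}$ where the paper's equation (\ref{P1eqn16}) has a typographical slip replacing the conjugate product by a product of absolute values.
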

\begin{proof}
Treating $(W_{\phi,\alpha}f)(\textbf{\em b},\boldsymbol a),\ (W_{\psi,\alpha}g)(\textbf{\em b},\boldsymbol a)$ as  $L^2({R}^n)$ functions of the variable $\boldsymbol b$ and applying the Parseval's formula for the FrFT, we have
\begin{eqnarray*}
\int_{\mathbb{R}_{0}^n}\int_{\mathbb{R}^n}(W_{\phi,\alpha}f)(\textbf{\em b},\boldsymbol a)\overline{(W_{\psi,\alpha}g)(\textbf{\em b},\boldsymbol a)}\frac{d\boldsymbol b d\textbf{\em a}}{|\textbf{\em a}|_{p}^2}&=&\int_{\mathbb{R}_{0}^n}\left(\int_{\mathbb{R}^n}(\mathfrak{F_{{\alpha},\boldsymbol b}}\{(W_{\phi,\alpha}f)(\textbf{\em b},\boldsymbol a)\})(\boldsymbol\xi)\overline{(\mathfrak{F_{{\alpha},\boldsymbol b}}\{(W_{\psi,\alpha}g)(\textbf{\em b},\boldsymbol a)\})(\boldsymbol\xi)}d\boldsymbol\xi\right)\frac{ d\textbf{\em a}}{|\textbf{\em a}|_{p}^2},\\
\end{eqnarray*}
Using lemma (\ref{P1lemma2.3}), we obtain\\\\
$\displaystyle\int_{\mathbb{R}_{0}^n}\int_{\mathbb{R}^n}(W_{\phi,\alpha}f)(\textbf{\em b},\boldsymbol a)\overline{(W_{\psi,\alpha}g)(\textbf{\em b},\boldsymbol a)}\frac{d\boldsymbol b d\textbf{\em a}}{|\textbf{\em a}|_{p}^2}$
\begin{eqnarray}\label{P1eqn15}
&=&\int_{\mathbb{R}_{0}^n}\left(\int_{\mathbb{R}^n}\frac{|\textbf{\em a}|_{p}}{|C_{\alpha}|^2}\overline{\left(\mathfrak{F_{\alpha}}\left\{e^{-\frac{i}{2}\|\cdot\|^2\cot\alpha}\phi(\cdot)\right\}\right)(\textbf{\em a}\boldsymbol\xi)}(\mathfrak{F_\alpha}f)(\boldsymbol\xi)\left(\mathfrak{F_{\alpha}}\left\{e^{-\frac{i}{2}\|\cdot\|^2\cot\alpha}\psi(\cdot)\right\}\right)(\textbf{\em a}\boldsymbol\xi)\overline{(\mathfrak{F_{\alpha}g})(\boldsymbol\xi))}d\boldsymbol\xi\right)\frac{d\textbf{\em a}}{|\textbf{\em a}|_{p}^2}\notag\\
&=&\int_{\mathbb{R}^n}\frac{1}{|C_{\alpha}|^2}(\mathfrak{F_{\alpha}f})(\boldsymbol\xi)\overline{(\mathfrak{F_{\alpha}g})(\boldsymbol\xi)}\left(\int_{\mathbb{R}_{0}^n}\overline{\left(\mathfrak{F_{\alpha}}\left\{e^{-\frac{i}{2}\|\cdot\|^2\cot\alpha}\phi(\cdot)\right\}\right)(\textbf{\em a}\boldsymbol\xi)}\left(\mathfrak{F_{\alpha}}\left\{e^{-\frac{i}{2}\|\cdot\|^2\cot\alpha}\psi(\cdot)\right\}\right)(\textbf{\em a}\boldsymbol\xi)\frac{d\textbf{\em a}}{|\textbf{\em a}| _{p}}\right)d\boldsymbol\xi. 
\end{eqnarray}
For the integral in the parentheses of (\ref{P1eqn15}) we substitute $\textbf{\em a}\boldsymbol\xi=\boldsymbol u,$ and obtain
\begin{equation}
\begin{array}{l}\label{P1eqn16}
\hspace{-3cm}\displaystyle\int_{\mathbb{R}_{0}^n}\overline{\left(\mathfrak{F_{\alpha}}\left\{e^{-\frac{i}{2}\|\cdot\|^2\cot\alpha}\phi(\cdot)\right\}\right)(\textbf{\em a}\boldsymbol\xi)}\left(\mathfrak{F_{\alpha}}\left\{e^{-\frac{i}{2}\|\cdot\|^2\cot\alpha}\psi(\cdot)\right\}\right)(\textbf{\em a}\boldsymbol\xi)\frac{d\textbf{\em a}}{|\textbf{\em a}| _{p}}\\
\hspace{1cm}=\displaystyle\int_{\mathbb{R}_{0}^n}\left|\left(\mathfrak{F}_\alpha\left\{e^{-\frac{i}{2}\|\cdot\|^2\cot\alpha}\phi(\cdot)\right\}\right)(\boldsymbol u)\right|\left|\left(\mathfrak{F}_\alpha\left\{e^{-\frac{i}{2}\|\cdot\|^2\cot\alpha}\psi(\cdot)\right\}\right)(\boldsymbol u)\right|\frac{1}{|\boldsymbol u|_{p}}d\boldsymbol u.
\end{array}
\end{equation}
From (\ref{P1eqn14}), (\ref{P1eqn15})and (\ref{P1eqn16}), we have
\begin{eqnarray*}
\int_{\mathbb{R}_{0}^n}\int_{\mathbb{R}^n}(W_{\phi,\alpha}f)(\textbf{\em b},\boldsymbol a)\overline{(W_{\psi,\alpha}g)(\textbf{\em b},\boldsymbol a)}\frac{d\boldsymbol b d\textbf{\em a}}{|\textbf{\em a}|_{p}^2}&=&\frac{1}{|C_{\alpha}|^2}\int_{\mathbb{R}^n}C_{\boldsymbol\phi,\boldsymbol\psi,\alpha}(\mathfrak{F_{\alpha}f})(\boldsymbol\xi)\overline{(\mathfrak{F_{\alpha}g})(\boldsymbol\xi)}d\boldsymbol\xi\\
&=&\frac{C_{\boldsymbol\phi,\boldsymbol\psi,\alpha}}{|C_{\alpha}|^2}\langle\frak{F_{\alpha}f},\mathfrak{F_{\alpha}g}\rangle_{L^2(\mathbb{R}^n)}\\
&=&\frac{C_{\boldsymbol\phi,\boldsymbol\psi,\alpha}}{|C_{\alpha}|^2}\langle f,g\rangle_{L^2(\mathbb{R}^n)},\ \mbox{using the Parseval's formula}.
\end{eqnarray*}
This completes the proof.
\end{proof}
\begin{corollary}\label{P1corollary2.1}
Let $\psi$ is a wavelet and $f,g\in L^2(\mathbb{R}^n)$. Then
$$\int_{\mathbb{R}_{0}^n}\int_{\mathbb{R}^n}(W_{\psi,\alpha}f)(\textbf{\em b},\boldsymbol a)\overline{(W_{\psi,\alpha}g)(\textbf{\em b},\boldsymbol a)}\frac{d\boldsymbol b d\textbf{\em a}}{|\textbf{\em a}|_{p}^2}=\frac{C_{\boldsymbol\psi,\alpha}}{|C_{\alpha}|^2}\langle f,g\rangle_{L^2(\mathbb{R}^n)},$$
where $C_{\psi ,\alpha}$ is as defined in (\ref{P1eqn5}).
\end{corollary}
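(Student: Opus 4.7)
The plan is to derive this as a direct specialization of Theorem \ref{P1theo2.2}, taking $\phi=\psi$. The work consists of verifying the two ingredients needed to invoke that theorem and then reading off the conclusion; no genuinely new estimates are required.

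First I would check that the integrability hypothesis (\ref{P1eqn13}) of Theorem \ref{P1theo2.2} is satisfied when $\phi=\psi$. Substituting $\phi=\psi$ collapses the integrand into a modulus squared, so the hypothesis reduces to
$$\int_{\mathbb{R}_{0}^n}\left|\left(\mathfrak{F}_\alpha\left\{e^{-\frac{i}{2}\|\cdot\|^2\cot\alpha}\psi(\cdot)\right\}\right)(\boldsymbol u)\right|^2\frac{1}{|\boldsymbol u|_{p}}\,d\boldsymbol u<\infty,$$
which is precisely the condition that $C_{\psi,\alpha}$ as defined in (\ref{P1eqn5}) be finite. Since $\psi$ is assumed to be a wavelet, Theorem \ref{P1theo1.6} guarantees this, so Theorem \ref{P1theo2.2} applies.

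Next I would compute the constant $C_{\phi,\psi,\alpha}$ of (\ref{P1eqn14}) for the choice $\phi=\psi$. The conjugate factor and the plain factor combine into a modulus squared, giving
$$C_{\psi,\psi,\alpha}=\int_{\mathbb{R}_{0}^n}\left|\left(\mathfrak{F}_\alpha\left\{e^{-\frac{i}{2}\|\cdot\|^2\cot\alpha}\psi(\cdot)\right\}\right)(\boldsymbol u)\right|^2\frac{1}{|\boldsymbol u|_{p}}\,d\boldsymbol u=C_{\psi,\alpha}.$$
Substituting this identification into the conclusion of Theorem \ref{P1theo2.2} immediately yields the claimed formula. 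There is no real obstacle here; the corollary is essentially a bookkeeping exercise that specializes the two-wavelet inner product relation to the single-wavelet case.
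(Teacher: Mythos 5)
Your proposal is correct and matches the paper's intent: the corollary is stated immediately after Theorem \ref{P1theo2.2} precisely as the specialization $\phi=\psi$, with hypothesis (\ref{P1eqn13}) reducing to the admissibility condition of Theorem \ref{P1theo1.6} and $C_{\psi,\psi,\alpha}$ collapsing to $C_{\psi,\alpha}$ exactly as you compute. Nothing further is needed.
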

\begin{corollary}\label{P1corollary2.2}
Let $\psi$ is a wavelet and $f\in L^2(\mathbb{R}^n)$. Then
$$\int_{\mathbb{R}_{0}^n}\int_{\mathbb{R}^n}|(W_{\psi,\alpha}f)(\textbf{\em b},\boldsymbol a)|^2\frac{d\boldsymbol b d\textbf{\em a}}{|\textbf{\em a}|_{p}^2}=\frac{C_{\boldsymbol\psi,\alpha}}{|C_{\alpha}|^2}\|f\|^2_{L^2(\mathbb{R}^n)},$$
where $C_{\psi ,\alpha}$ is as defined in (\ref{P1eqn5}).
\end{corollary}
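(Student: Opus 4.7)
The statement to prove is Corollary \ref{P1corollary2.2}, which is the norm-form of the Plancherel-type identity for the CFrWT with a single wavelet. The plan is extremely short: this is an immediate specialization of Corollary \ref{P1corollary2.1} in which one sets $g = f$.

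Concretely, first I would invoke Corollary \ref{P1corollary2.1} with the second argument function equal to $f$. Its left-hand side then reads $\int_{\mathbb{R}_0^n}\int_{\mathbb{R}^n}(W_{\psi,\alpha}f)(\boldsymbol b,\boldsymbol a)\,\overline{(W_{\psi,\alpha}f)(\boldsymbol b,\boldsymbol a)}\,\frac{d\boldsymbol b\,d\boldsymbol a}{|\boldsymbol a|_p^2}$, which equals $\int_{\mathbb{R}_0^n}\int_{\mathbb{R}^n}|(W_{\psi,\alpha}f)(\boldsymbol b,\boldsymbol a)|^2\,\frac{d\boldsymbol b\,d\boldsymbol a}{|\boldsymbol a|_p^2}$ since $z\bar z = |z|^2$ for any complex number $z$. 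Simultaneously, the right-hand side $\frac{C_{\psi,\alpha}}{|C_\alpha|^2}\langle f,f\rangle_{L^2(\mathbb{R}^n)}$ collapses to $\frac{C_{\psi,\alpha}}{|C_\alpha|^2}\|f\|_{L^2(\mathbb{R}^n)}^2$ by definition of the $L^2$-norm.

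No obstacle arises: the hypotheses of Corollary \ref{P1corollary2.1} (namely, that $\psi$ is a wavelet and the second argument lies in $L^2(\mathbb{R}^n)$) are precisely the hypotheses assumed here. Equivalently, one could invoke Theorem \ref{P1theo2.2} directly with $\phi = \psi$, noting that then the integrand in \eqref{P1eqn14} becomes $|(\mathfrak{F}_\alpha\{e^{-\frac{i}{2}\|\cdot\|^2\cot\alpha}\psi(\cdot)\})(\boldsymbol u)|^2$, so that $C_{\psi,\psi,\alpha} = C_{\psi,\alpha}$ as given in \eqref{P1eqn5}, and the integrability condition \eqref{P1eqn13} reduces to the finiteness of $C_{\psi,\alpha}$, which is guaranteed by wavelet admissibility of $\psi$ via Theorem \ref{P1theo1.6}. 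Either route dispatches the corollary in one line.
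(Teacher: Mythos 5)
Your proposal is correct and matches the paper's (implicit) argument: the corollary is exactly Corollary \ref{P1corollary2.1} specialized to $g=f$, with $\langle f,f\rangle_{L^2(\mathbb{R}^n)}=\|f\|^2_{L^2(\mathbb{R}^n)}$ and $z\bar z=|z|^2$; your side remark that condition (\ref{P1eqn13}) with $\phi=\psi$ reduces to the finiteness of $C_{\psi,\alpha}$, guaranteed by Theorem \ref{P1theo1.6}, correctly justifies the applicability of Theorem \ref{P1theo2.2}. Note also that the identity was already obtained directly in equation (\ref{P1eqn6}) of Theorem \ref{P1theo2.1}, so either route is fine.
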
 
From corollary (\ref{P1corollary2.2}) it follows that the linear operator $W_{\psi,\alpha}$ is isometric upto the constant $\frac{\sqrt{C_{\psi,\alpha}}}{|C_{\alpha}|},$ i.e., \\
$$\|W_{\psi,\alpha}f\|_{L^2\Big(\mathbb{R}^n\times\mathbb{R}_{0}^n,\frac{d\boldsymbol b d\textbf{\em a}}{|\textbf{\em a}|_{p}^2}\Big)}=\frac{\sqrt{C_{\psi,\alpha}}}{|C_{\alpha}|}\|f\|_{L^2(\mathbb{R}^n)}\ \mbox{for all}\  f\in L^2(\mathbb{R}^n).$$
\subsection{Reconstruction formula involving two wavelets}
\begin{theorem}\label{P1theo2.3}
Let $\phi,\psi$ be two wavelets satisfying (\ref{P1eqn13})
and $f\in L^2(\mathbb{R}^n)$. Also let $C_{\phi,\psi,\alpha},$ as defined in (\ref{P1eqn14}), is non-zero. Then the reconstruction formula for $f$ is given by
$$f(\boldsymbol t)=\frac{|C_{\alpha}|^2}{C_{\phi,\psi,\alpha}}\int_{\mathbb{R}_{0}^n}\int_{\mathbb{R}^n}\phi_{\textbf{\em a},\boldsymbol b,\alpha}(\boldsymbol t)(W_{\psi,\alpha}f)(\textbf{\em b},\boldsymbol a)\frac{d\boldsymbol b d\textbf{\em a}}{|\textbf{\em a}|_{p}^2}.$$
\end{theorem}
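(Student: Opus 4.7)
My plan is to establish the reconstruction formula in the weak sense. That is, instead of trying to control the vector-valued double integral pointwise in $\boldsymbol t$, I would test both sides against an arbitrary $g \in L^2(\mathbb{R}^n)$ and show the resulting scalar inner products coincide. Since $L^2(\mathbb{R}^n)$ separates itself, this forces the equality a.e.\ on $\mathbb{R}^n$. Denote the right-hand side of the claimed formula by $h(\boldsymbol t)$, so the goal is to prove $\langle h,g\rangle_{L^2(\mathbb{R}^n)} = \langle f,g\rangle_{L^2(\mathbb{R}^n)}$ for every $g\in L^2(\mathbb{R}^n)$.

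The key step is an application of Fubini to rewrite
$$\langle h,g\rangle_{L^2(\mathbb{R}^n)} = \frac{|C_\alpha|^2}{C_{\phi,\psi,\alpha}}\int_{\mathbb{R}_0^n}\int_{\mathbb{R}^n}\left(\int_{\mathbb{R}^n}\phi_{\boldsymbol a,\boldsymbol b,\alpha}(\boldsymbol t)\overline{g(\boldsymbol t)}\,d\boldsymbol t\right)(W_{\psi,\alpha}f)(\boldsymbol b,\boldsymbol a)\,\frac{d\boldsymbol b\,d\boldsymbol a}{|\boldsymbol a|_p^2}.$$
The inner integral is precisely $\overline{(W_{\phi,\alpha}g)(\boldsymbol b,\boldsymbol a)}$, so the expression reduces to a pairing of two wavelet transforms against the weighted measure on $\mathbb{R}^n\times\mathbb{R}_0^n$. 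Then I would invoke Theorem~\ref{P1theo2.2} (with the roles of the two wavelets suitably arranged so that the constant matches $C_{\phi,\psi,\alpha}$, noting the admissibility hypothesis (\ref{P1eqn13}) is symmetric in $\phi$ and $\psi$) to identify this pairing with $\frac{C_{\phi,\psi,\alpha}}{|C_\alpha|^2}\langle f,g\rangle_{L^2(\mathbb{R}^n)}$. The factor cancels the one in front of $h$, giving the desired equality.

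The principal obstacle is the justification of Fubini, since the iterated integral defining $h$ is over $\mathbb{R}^n\times\mathbb{R}_0^n\times\mathbb{R}^n$ with a singular weight, and neither factor need be absolutely integrable on its own. I would circumvent this by exploiting Theorem~\ref{P1theo2.1}, which gives $W_{\psi,\alpha}f$ and $W_{\phi,\alpha}g$ in $L^2\bigl(\mathbb{R}^n\times\mathbb{R}_0^n,\frac{d\boldsymbol b\,d\boldsymbol a}{|\boldsymbol a|_p^2}\bigr)$, together with the Cauchy–Schwarz inequality in that weighted Hilbert space; this yields a finite bound on the triple integral of absolute values and legitimizes the interchange. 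Once the weak equality $\langle h,g\rangle_{L^2(\mathbb{R}^n)}=\langle f,g\rangle_{L^2(\mathbb{R}^n)}$ is in hand for every $g\in L^2(\mathbb{R}^n)$, taking $g=h-f$ yields $\|h-f\|_{L^2(\mathbb{R}^n)}^2=0$, completing the proof.
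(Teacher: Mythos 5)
Your route is genuinely different from the paper's. The paper fixes $\boldsymbol t$, applies Parseval's formula in the $\boldsymbol b$-variable, substitutes the explicit fractional Fourier transforms of $\overline{\phi_{\boldsymbol a,\boldsymbol b,\alpha}(\boldsymbol t)}$ and of $(W_{\psi,\alpha}f)(\cdot,\boldsymbol a)$ from Lemmas \ref{P1lemma2.2} and \ref{P1lemma2.3}, and finally recognizes the remaining $\boldsymbol\xi$-integral as the inverse FrFT of $\mathfrak{F}_\alpha f$. You instead test against $g\in L^2(\mathbb{R}^n)$ and reduce everything to the inner product relation of Theorem \ref{P1theo2.2}. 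That is a legitimate and in fact cleaner reading of the reconstruction formula (it interprets the $(\boldsymbol b,\boldsymbol a)$-integral weakly, which is the only sense in which it is guaranteed to converge), and your Fubini step correctly identifies $\int_{\mathbb{R}^n}\phi_{\boldsymbol a,\boldsymbol b,\alpha}(\boldsymbol t)\overline{g(\boldsymbol t)}\,d\boldsymbol t$ with $\overline{(W_{\phi,\alpha}g)(\boldsymbol b,\boldsymbol a)}$.

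There are, however, two concrete gaps. First, the constant does not cancel as claimed. Your pairing is $\int\int(W_{\psi,\alpha}f)\overline{(W_{\phi,\alpha}g)}\,\frac{d\boldsymbol b\,d\boldsymbol a}{|\boldsymbol a|_p^2}$, i.e.\ Theorem \ref{P1theo2.2} applied with the wavelets in the order $(\psi,\phi)$; by (\ref{P1eqn14}) this yields $\frac{C_{\psi,\phi,\alpha}}{|C_\alpha|^2}\langle f,g\rangle$ with $C_{\psi,\phi,\alpha}=\overline{C_{\phi,\psi,\alpha}}$, so you end with $\langle h,g\rangle=\frac{\overline{C_{\phi,\psi,\alpha}}}{C_{\phi,\psi,\alpha}}\langle f,g\rangle$. ``Suitably arranging the roles'' only conjugates the constant; it never produces $C_{\phi,\psi,\alpha}$ itself. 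So your argument proves the formula with $\overline{C_{\phi,\psi,\alpha}}$ in the denominator, or the stated formula under the extra hypothesis that $C_{\phi,\psi,\alpha}$ is real. (The paper's own proof hides the same discrepancy inside equation (\ref{P1eqn16}), which illegitimately replaces $\overline{A}\,B$ by $|A|\,|B|$; your method actually exposes a defect in the statement, but you must say so rather than assert the cancellation.) Second, Cauchy--Schwarz together with Theorem \ref{P1theo2.1} bounds $\int\int|W_{\psi,\alpha}f|\,|W_{\phi,\alpha}g|$, but Tonelli for the triple integral requires finiteness of $\int\int\int|\phi_{\boldsymbol a,\boldsymbol b,\alpha}(\boldsymbol t)|\,|g(\boldsymbol t)|\,|(W_{\psi,\alpha}f)(\boldsymbol b,\boldsymbol a)|\,d\boldsymbol t\,\frac{d\boldsymbol b\,d\boldsymbol a}{|\boldsymbol a|_p^2}$, whose inner $\boldsymbol t$-integral is the transform of $|g|$ against $|\phi|$ and is not controlled by $|(W_{\phi,\alpha}g)(\boldsymbol b,\boldsymbol a)|$ (nor need $|\phi|$ be admissible). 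You should either truncate to a compact set of scales $\boldsymbol a$ bounded away from the coordinate hyperplanes and pass to the limit, or define the right-hand side from the outset as the weak (Riesz-representation) integral, in which case no Fubini is needed and the argument is exactly Theorem \ref{P1theo2.2} read backwards.
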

\begin{proof}
Treating $\phi_{\textbf{\em a},\boldsymbol b,\alpha}(\boldsymbol t),(W_{\psi,\alpha}f)(\textbf{\em b},\boldsymbol a)$ as a $L^2(\mathbb{R}^n)$ function of the variable $\boldsymbol b$ and applying the Parseval'  s formula for the FrFT, we have
\begin{align}\label{P1eqn17}
\frac{|C_{\alpha}|^2}{C_{\phi,\psi,\alpha}}\int_{\mathbb{R}_{0}^n}\int_{\mathbb{R}^n}\phi_{\textbf{\em a},\boldsymbol b,\alpha}(\boldsymbol t)&(W_{\psi,\alpha}f)(\textbf{\em b},\boldsymbol a)\frac{d\boldsymbol b d\textbf{\em a}}{|\textbf{\em a}|_{p}^2}\notag\\
&=\frac{|C_{\alpha}|^2}{C_{\phi,\psi,\alpha}}\int_{\mathbb{R}_{0}^n}\left(\int_{\mathbb{R}^n}\overline{(\mathfrak{F}_{\alpha ,\boldsymbol b}\{\overline{\phi_{\textbf{\em a},\boldsymbol b,\alpha}(\boldsymbol t)}\})(\boldsymbol\xi)}(\mathfrak{F}_{\alpha,\boldsymbol b}\{(W_{\psi,\alpha}f)(\textbf{\em b},\boldsymbol a)\})(\boldsymbol\xi)d\boldsymbol\xi\right)\frac{d\textbf{\em a}}{|\textbf{\em a}|_{p}^2}.
\end{align}
Using lemma \ref{P1lemma2.2} and lemma \ref{P1lemma2.3}, we obtain
\begin{equation*}
\begin{array}{l}
\frac{|C_{\alpha}|^2}{C_{\phi,\psi,\alpha}}\displaystyle\int_{\mathbb{R}_{0}^n}\int_{\mathbb{R}^n}\phi_{\textbf{\em a},\boldsymbol b,\alpha}(\boldsymbol t)(W_{\psi,\alpha}f)(\textbf{\em b},\boldsymbol a)\frac{d\boldsymbol b d\textbf{\em a}}{|\textbf{\em a}|_{p}^2}\\ [8pt]
=\frac{|C_{\alpha}|^2}{C_{\phi,\psi,\alpha}}\displaystyle\int_{\mathbb{R}_{0}^n}\bigg(\int_{\mathbb{R}^n}\frac{|\textbf{\em a}|_{p}}{C_{\alpha}}e^{-\frac{i}{2}(\|\boldsymbol t\|^2+\|\boldsymbol\xi\|^2)\cot\alpha+i\langle\boldsymbol t,\boldsymbol\xi\rangle\csc\alpha}\left(\mathfrak{F}_\alpha\left\{e^{-\frac{1}{2}\|\cdot\|^2\cot\alpha}\phi(\cdot)\right\}\right)(\textbf{\em a}\boldsymbol\xi) \\[8pt] \hspace{3 cm}\times\overline{\left(\mathfrak{F}_\alpha\left\{e^{-\frac{1}{2}\|\cdot\|^2\cot\alpha}\psi(\cdot)\right\}\right)(\textbf{\em a}\boldsymbol\xi)}(\mathfrak{F}_\alpha f)(\boldsymbol\xi)d\boldsymbol\xi\bigg)\frac{d\textbf{\em a}}{|\textbf{\em a}|_{p}^2}\\[8pt]
=\frac{|C_{\alpha}|^2}{C_{\phi,\psi,\alpha}}\displaystyle\int_{\mathbb{R}^n}\frac{1}{C_{\alpha}}e^{-\frac{i}{2}(\|\boldsymbol t\|^2+\|\boldsymbol\xi\|^2)\cot\alpha+i\langle\boldsymbol t,\boldsymbol\xi\rangle\csc\alpha}(\mathfrak{F}_\alpha f)(\boldsymbol\xi)\\[8pt] \hspace{3 cm}\times\left(\displaystyle\int_{\mathbb{R}_{0}^n}\left(\mathfrak{F}_\alpha\left\{e^{-\frac{1}{2}\|\cdot\|^2\cot\alpha}\phi(\cdot)\right\}\right)(\textbf{\em a}\boldsymbol\xi)\overline{\left(\mathfrak{F}_\alpha\left\{e^{-\frac{1}{2}\|\cdot\|^2\cot\alpha}\psi(\cdot)\right\}\right)(\textbf{\em a}\boldsymbol\xi)}\frac{1}{|\textbf{\em a}|_{p}}d\textbf{\em a}\right)d\boldsymbol\xi.
\end{array}
\end{equation*}
Using equation (\ref{P1eqn14}) and equation (\ref{P1eqn16}), yields
\begin{eqnarray}\label{P1eqn18}
\frac{|C_{\alpha}|^2}{C_{\phi,\psi,\alpha}}\int_{\mathbb{R}_{0}^n}\int_{\mathbb{R}^n}\phi_{\textbf{\em a},\boldsymbol b,\alpha}(\boldsymbol t)(W_{\psi,\alpha}f)(\textbf{\em b},\boldsymbol a)\frac{d\boldsymbol b d\textbf{\em a}}{|\textbf{\em a}|_{p}^2}&=&|C_{\alpha}|^2\int_{\mathbb{R}^n}\frac{1}{C_{\alpha}}e^{-\frac{i}{2}(\|\boldsymbol t\|^2+\|\boldsymbol\xi\|^2)\cot\alpha+i\langle\boldsymbol t,\boldsymbol\xi\rangle\csc\alpha}(\mathfrak{F}_\alpha f)(\boldsymbol\xi)d\boldsymbol\xi\notag\\
&=&\int_{\mathbb{R}^n}\overline{C_{\alpha}}e^{-\frac{i}{2}(\|\boldsymbol t\|^2+\|\boldsymbol\xi\|^2)\cot\alpha+i\langle\boldsymbol t,\boldsymbol\xi\rangle\csc\alpha}(\mathfrak{F}_\alpha f)(\boldsymbol\xi)d\boldsymbol\xi\notag\\
&=&\int_{\mathbb{R}^n}K_{-\alpha}(\boldsymbol t,\boldsymbol\xi)(\mathfrak{F}_\alpha f)(\boldsymbol\xi)d\boldsymbol\xi\notag\\
&=&f(\boldsymbol t).
\end{eqnarray}
The theorem follows from equation (\ref{P1eqn17}) and equation (\ref{P1eqn18}).
\end{proof}
\begin{corollary}\label{P1corollary2.3}
For a given $f\in L^2(\mathbb{R}^n)$ and a wavelet $\psi$ we also have the following reconstruction formula:
$$f(\boldsymbol t)=\frac{|C_{\alpha}|^2}{C_{\psi,\alpha}}\int_{\mathbb{R}_{0}^n}\int_{\mathbb{R}^n}\psi_{\textbf{\em a},\boldsymbol b,\alpha}(\boldsymbol t)(W_{\psi,\alpha}f)(\textbf{\em b},\boldsymbol a)\frac{d\boldsymbol b d\textbf{\em a}}{|\textbf{\em a}|_{p}^2}.$$
\end{corollary}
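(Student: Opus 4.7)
The statement is a direct specialization of Theorem \ref{P1theo2.3} to the case $\phi=\psi$, so the plan is simply to check that the hypotheses of that theorem are automatically satisfied and that the constant $C_{\phi,\psi,\alpha}$ specializes correctly to $C_{\psi,\alpha}$. No new calculation is really required.

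First I would set $\phi=\psi$ inside the admissibility-type integrability assumption (\ref{P1eqn13}) that Theorem \ref{P1theo2.3} demands. With $\phi=\psi$ the integrand becomes
\[
\left|\left(\mathfrak{F}_\alpha\left\{e^{-\frac{i}{2}\|\cdot\|^2\cot\alpha}\psi(\cdot)\right\}\right)(\boldsymbol u)\right|^{2}\frac{1}{|\boldsymbol u|_{p}},
\]
whose integral over $\mathbb{R}_0^n$ is precisely $C_{\psi,\alpha}$ as introduced in (\ref{P1eqn5}). Since $\psi$ is a wavelet, Theorem \ref{P1theo1.6} gives $C_{\psi,\alpha}<\infty$, so condition (\ref{P1eqn13}) is automatically met and, furthermore, $C_{\psi,\alpha}$ is assumed to be non-zero (otherwise $\psi$ would force $W_{\psi,\alpha}f\equiv 0$ by Corollary \ref{P1corollary2.2}, which is incompatible with reconstructing a non-trivial $f$).

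Next I would evaluate $C_{\phi,\psi,\alpha}$ defined in (\ref{P1eqn14}) at $\phi=\psi$: the product $\overline{F(\boldsymbol u)}F(\boldsymbol u)=|F(\boldsymbol u)|^2$ with $F=\mathfrak{F}_\alpha\{e^{-\frac{i}{2}\|\cdot\|^2\cot\alpha}\psi(\cdot)\}$ immediately gives $C_{\psi,\psi,\alpha}=C_{\psi,\alpha}$. Substituting into the reconstruction formula of Theorem \ref{P1theo2.3} yields exactly
\[
f(\boldsymbol t)=\frac{|C_{\alpha}|^2}{C_{\psi,\alpha}}\int_{\mathbb{R}_{0}^n}\int_{\mathbb{R}^n}\psi_{\textbf{\em a},\boldsymbol b,\alpha}(\boldsymbol t)(W_{\psi,\alpha}f)(\textbf{\em b},\boldsymbol a)\frac{d\boldsymbol b\, d\textbf{\em a}}{|\textbf{\em a}|_{p}^2},
\]
which is what the corollary asserts.

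There is no genuine obstacle here; the only thing worth flagging is verifying the hypothesis (\ref{P1eqn13}) in the $\phi=\psi$ case, which follows from the wavelet admissibility characterization in Theorem \ref{P1theo1.6}. Thus the proof is a one-line invocation of Theorem \ref{P1theo2.3} together with the identification $C_{\psi,\psi,\alpha}=C_{\psi,\alpha}$.
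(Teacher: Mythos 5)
Your proposal is correct and is exactly the paper's (implicit) argument: the corollary is stated immediately after Theorem \ref{P1theo2.3} with no separate proof, being the specialization $\phi=\psi$, and you rightly verify the two points that make this legitimate, namely that hypothesis (\ref{P1eqn13}) reduces to $C_{\psi,\alpha}<\infty$ (guaranteed by Theorem \ref{P1theo1.6} since $\psi$ is a wavelet) and that $C_{\psi,\psi,\alpha}=C_{\psi,\alpha}$. The only wobble is your justification of $C_{\psi,\alpha}\neq 0$, which as phrased is circular (it appeals to the reconstruction you are in the middle of proving); the clean argument is that $C_{\psi,\alpha}$ is the integral of the nonnegative function $\left|\left(\mathfrak{F}_\alpha\{e^{-\frac{i}{2}\|\cdot\|^2\cot\alpha}\psi(\cdot)\}\right)(\boldsymbol u)\right|^2/|\boldsymbol u|_p$, so it vanishes only if that FrFT is zero a.e., which by Parseval forces $\psi=0$ a.e., contradicting that a wavelet is by definition non-zero.
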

\subsection{Reproducing Kernel function involving two wavelet for the range of CFrWT} 
\begin{theorem}\label{P1theo2.4}
Let $\phi$ and $\psi$ be two wavelets satisfying (\ref{P1eqn13}). Also let $C_{\phi,\psi,\alpha},$ as defined in (\ref{P1eqn14}), is non-zero and $(\boldsymbol b_{0},\boldsymbol a_{0})$ be a point in $\mathbb{R}^n\times\mathbb{R}_{0}^n.$ Then, $F\in L^2\left(\mathbb{R}^n\times\mathbb{R}_{0}^n,\frac{d\boldsymbol b d\boldsymbol a}{|\boldsymbol a|^2_{p}}\right)$ is the CFrWT of some $f\in L^2(\mathbb{R}^n)$ if and only if
$$F(\boldsymbol b_{0},\boldsymbol a_{0})=\int_{\mathbb{R}_{0}^n}\int_{\mathbb{R}^n}F(\boldsymbol b,\boldsymbol a)K_{\phi,\psi,\alpha}(\boldsymbol b_{0},\boldsymbol a_{0};\boldsymbol b,\boldsymbol a)\frac{d\boldsymbol b d\boldsymbol a}{|\boldsymbol a|^2_{p}},$$
where $K_{\phi,\psi,\alpha}(\boldsymbol b_{0},\boldsymbol a_{0};\boldsymbol b,\boldsymbol a)$ is the reproducing kernel, satisfying
$$K_{\phi,\psi,\alpha}(\boldsymbol b_{0},\boldsymbol a_{0};\boldsymbol b,\boldsymbol a)=\frac{|C_{\alpha}|^2}{C_{\phi,\psi,\alpha}}\int_{\mathbb{R}^n}\phi_{\boldsymbol a,\boldsymbol b,\alpha}(\boldsymbol t)\overline{{\psi_{\boldsymbol a_{0},\boldsymbol b_{0},\alpha}}(\boldsymbol t)}d\boldsymbol t.$$
\end{theorem}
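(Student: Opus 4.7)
The plan is to reduce the theorem to two Fubini-style computations, both pivoting on the reconstruction formula (Theorem \ref{P1theo2.3}) and the isometry property of $W_{\phi,\alpha}$ (Corollary \ref{P1corollary2.2}).

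For necessity, I would assume $F = W_{\psi,\alpha} f$ for some $f \in L^2(\mathbb{R}^n)$. Writing $F(\boldsymbol b_0,\boldsymbol a_0) = \langle f, \psi_{\boldsymbol a_0,\boldsymbol b_0,\alpha}\rangle_{L^2(\mathbb{R}^n)}$ and inserting the reconstruction of $f$ from Theorem \ref{P1theo2.3} inside that inner product produces a triple integral in $\boldsymbol t, \boldsymbol b, \boldsymbol a$. Swapping the $\boldsymbol t$-integration with the $(\boldsymbol b,\boldsymbol a)$-integration via Fubini collects $\phi_{\boldsymbol a,\boldsymbol b,\alpha}(\boldsymbol t)\overline{\psi_{\boldsymbol a_0,\boldsymbol b_0,\alpha}(\boldsymbol t)}$ into the inner integral, which by the stated formula is precisely $K_{\phi,\psi,\alpha}(\boldsymbol b_0,\boldsymbol a_0;\boldsymbol b,\boldsymbol a)$. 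This simultaneously proves the reproducing identity and verifies the kernel formula.

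For sufficiency, given $F$ satisfying the reproducing identity, I would define
$$f(\boldsymbol t) := \frac{|C_{\alpha}|^2}{C_{\phi,\psi,\alpha}}\int_{\mathbb{R}_{0}^n}\int_{\mathbb{R}^n}\phi_{\boldsymbol a,\boldsymbol b,\alpha}(\boldsymbol t)\,F(\boldsymbol b,\boldsymbol a)\,\frac{d\boldsymbol b\,d\boldsymbol a}{|\boldsymbol a|_{p}^2}$$
in imitation of the reconstruction formula. Rather than checking $L^2$-membership pointwise, I would pair against an arbitrary $g \in L^2(\mathbb{R}^n)$: after Fubini the pairing rewrites as an inner product of $F$ with $W_{\phi,\alpha} g$ in $L^2\!\left(\mathbb{R}^n\times\mathbb{R}_0^n,\tfrac{d\boldsymbol b\,d\boldsymbol a}{|\boldsymbol a|_p^2}\right)$, so Cauchy--Schwarz combined with Corollary \ref{P1corollary2.2} yields $|\langle f,g\rangle|\le \tfrac{|C_\alpha|\sqrt{C_{\phi,\alpha}}}{|C_{\phi,\psi,\alpha}|}\,\|F\|\,\|g\|_{L^2(\mathbb{R}^n)}$, and Riesz representation gives $f\in L^2(\mathbb{R}^n)$. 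Finally, specialising $g = \psi_{\boldsymbol a_0,\boldsymbol b_0,\alpha}$ in the same pairing rearranges the integrand into $F(\boldsymbol b,\boldsymbol a)\,K_{\phi,\psi,\alpha}(\boldsymbol b_0,\boldsymbol a_0;\boldsymbol b,\boldsymbol a)$, so the hypothesis collapses it to $(W_{\psi,\alpha} f)(\boldsymbol b_0,\boldsymbol a_0) = F(\boldsymbol b_0,\boldsymbol a_0)$.

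The hard part will be justifying Fubini rigorously, since an arbitrary $F \in L^2\!\left(\mathbb{R}^n\times\mathbb{R}_0^n,\tfrac{d\boldsymbol b\,d\boldsymbol a}{|\boldsymbol a|_p^2}\right)$ is not automatically absolutely integrable against $\phi_{\boldsymbol a,\boldsymbol b,\alpha}(\boldsymbol t)\overline{g(\boldsymbol t)}$ on the full product space. The cleanest workaround is to establish the identities first on a dense subclass, e.g.\ $F$ of compact support bounded away from the singular set $|\boldsymbol a|_p = 0$, or $F = W_{\phi,\alpha} h$ with $h \in \mathcal{S}(\mathbb{R}^n)$, where absolute convergence is immediate, and then promote to all of $L^2$ using the continuity bound derived above. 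Everything else is bookkeeping with the definitions of $\phi_{\boldsymbol a,\boldsymbol b,\alpha}$ and $W_{\phi,\alpha}$.
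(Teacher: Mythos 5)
Your proposal is correct, and the necessity direction is exactly the paper's argument: expand $F(\boldsymbol b_0,\boldsymbol a_0)=\langle f,\psi_{\boldsymbol a_0,\boldsymbol b_0,\alpha}\rangle_{L^2(\mathbb{R}^n)}$, substitute the two-wavelet reconstruction formula of Theorem \ref{P1theo2.3}, and interchange the $\boldsymbol t$- and $(\boldsymbol b,\boldsymbol a)$-integrations to read off the kernel. Where you genuinely add something is the converse. The paper's entire treatment of sufficiency is the single sentence ``the required $f$ is given by $\int_{\mathbb{R}_0^n}\int_{\mathbb{R}^n}F(\boldsymbol b,\boldsymbol a)\phi_{\boldsymbol a,\boldsymbol b,\alpha}\frac{d\boldsymbol b\, d\boldsymbol a}{|\boldsymbol a|_p^2}$,'' with no verification that this integral defines an element of $L^2(\mathbb{R}^n)$ or that its CFrWT recovers $F$. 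Your duality argument supplies both: pairing the candidate against $g\in L^2(\mathbb{R}^n)$ gives $\langle f,g\rangle=\frac{|C_\alpha|^2}{C_{\phi,\psi,\alpha}}\langle F,W_{\phi,\alpha}g\rangle$, so Cauchy--Schwarz together with Corollary \ref{P1corollary2.2} yields the bound $\frac{|C_\alpha|\sqrt{C_{\phi,\alpha}}}{|C_{\phi,\psi,\alpha}|}\|F\|\,\|g\|_{L^2(\mathbb{R}^n)}$ and hence $f\in L^2(\mathbb{R}^n)$ via Riesz representation (in effect interpreting the defining integral weakly), and taking $g=\psi_{\boldsymbol a_0,\boldsymbol b_0,\alpha}$ turns the hypothesis into $(W_{\psi,\alpha}f)(\boldsymbol b_0,\boldsymbol a_0)=F(\boldsymbol b_0,\boldsymbol a_0)$. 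Your flagging of the Fubini issue and the dense-subclass workaround is likewise absent from the paper, which interchanges integrals without comment in both Theorems \ref{P1theo2.3} and \ref{P1theo2.4}. One caveat inherited from the statement itself rather than from your argument: for $W_{\psi,\alpha}f=F$ as elements of $L^2\bigl(\mathbb{R}^n\times\mathbb{R}_0^n,\frac{d\boldsymbol b\, d\boldsymbol a}{|\boldsymbol a|_p^2}\bigr)$ the reproducing identity must be assumed at (almost) every point $(\boldsymbol b_0,\boldsymbol a_0)$, not just the single fixed one the theorem names; your proof, like the paper's, implicitly uses this reading.
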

\begin{proof}
Let $f\in L^2(\mathbb{R}^n)$ such that $W_{\psi,\alpha}f=F,$ then
\begin{eqnarray*}
F(\boldsymbol b_{0},\boldsymbol a_{0})&=&(W_{\psi,\alpha}f)(\boldsymbol b_{0},\boldsymbol a_{0})\\
&=&\int_{\mathbb{R}^n}f(\boldsymbol t)\overline{\psi_{\boldsymbol a_{0},\boldsymbol b_{0},\alpha}(\boldsymbol t)}d\boldsymbol t\\
&=&\frac{|C_{\alpha}|^2}{C_{\phi,\psi,\alpha}}\int_{\mathbb{R}^n}\left(\int_{\mathbb{R}_{0}^n}\int_{\mathbb{R}^n}\phi_{\textbf{\em a},\boldsymbol b,\alpha}(\boldsymbol t)(W_{\psi,\alpha}f)(\textbf{\em b},\boldsymbol a)\frac{d\boldsymbol b d\textbf{\em a}}{|\textbf{\em a}|_{p}^2} \right)\overline{\psi_{\boldsymbol a_{0},\boldsymbol b_{0},\alpha}(\boldsymbol t)}d\boldsymbol t\\
&=&\int_{\mathbb{R}_{0}^n}\int_{\mathbb{R}^n}(W_{\psi,\alpha}f)(\textbf{\em b},\boldsymbol a)\left(\frac{|C_{\alpha}|^2}{C_{\phi,\psi,\alpha}}\int_{\mathbb{R}^n}\phi_{\boldsymbol a,\boldsymbol b,\alpha}(\boldsymbol t)\overline{{\psi_{\boldsymbol a_{0},\boldsymbol b_{0},\alpha}}(\boldsymbol t)}d\boldsymbol t\right)\frac{d\boldsymbol b d\boldsymbol a}{|\boldsymbol a|^2_{p}}\\
&=&\int_{\mathbb{R}_{0}^n}\int_{\mathbb{R}^n}F(\boldsymbol b,\boldsymbol a)K_{\phi,\psi,\alpha}(\boldsymbol b_{0},\boldsymbol a_{0};\boldsymbol b,\boldsymbol a)\frac{d\boldsymbol b d\boldsymbol a}{|\boldsymbol a|^2_{p}},
\end{eqnarray*}
where $K_{\phi,\psi,\alpha}(\boldsymbol b_{0},\boldsymbol a_{0};\boldsymbol b,\boldsymbol a)=\frac{|C_{\alpha}|^2}{C_{\phi,\psi,\alpha}}\ \displaystyle\int_{\mathbb{R}^n}\phi_{\boldsymbol a,\boldsymbol b,\alpha}(\boldsymbol t)\overline{{\psi_{\boldsymbol a_{0},\boldsymbol b_{0},\alpha}}(\boldsymbol t)}d\boldsymbol t.$\\[8pt]
Conversely, let for the given $F\in L^2\left(\mathbb{R}^n\times\mathbb{R}_{0}^n,\frac{d\boldsymbol b d\boldsymbol a}{|\boldsymbol a|^2_{p}}\right)$ the condition holds.\\[8pt]
 Then the required $f$ is given by $\displaystyle\int_{{R}_{0}^n}\int_{{R}^n} F(\boldsymbol b,\textbf{\em a})\phi_{\textbf{\em a},\boldsymbol b,\alpha}\frac{d\boldsymbol b d\boldsymbol a}{|\boldsymbol a|^2_{p}}.$
\end{proof}
\begin{corollary}\label{P1corollary2.4}
Let $\psi$ be a wavelet and $(\boldsymbol b_{0},\boldsymbol a_{0})$ be a point in $\mathbb{R}^n\times\mathbb{R}_{0}^n.$ Then, $F\in L^2\left(\mathbb{R}^n\times\mathbb{R}_{0}^n,\frac{d\boldsymbol b d\boldsymbol a}{|\boldsymbol a|^2_{p}}\right)$ is the CFrWT of some $f\in L^2(\mathbb{R}^n)$ if and only if
$$F(\boldsymbol b_{0},\boldsymbol a_{0})=\int_{\mathbb{R}_{0}^n}\int_{\mathbb{R}^n}F(\boldsymbol b,\boldsymbol a)K_{\psi,\alpha}(\boldsymbol b_{0},\boldsymbol a_{0};\boldsymbol b,\boldsymbol a)\frac{d\boldsymbol b d\boldsymbol a}{|\boldsymbol a|^2_{p}},$$
where $K_{\psi,\alpha}(\boldsymbol b_{0},\boldsymbol a_{0};\boldsymbol b,\boldsymbol a)$ is the reproducing kernel, satisfying
$$K_{\psi,\alpha}(\boldsymbol b_{0},\boldsymbol a_{0};\boldsymbol b,\boldsymbol a)=\frac{|C_{\alpha}|^2}{C_{\psi,\alpha}}\int_{\mathbb{R}^n}\psi_{\boldsymbol a,\boldsymbol b,\alpha}(\boldsymbol t)\overline{{\psi_{\boldsymbol a_{0},\boldsymbol b_{0},\alpha}}(\boldsymbol t)}d\boldsymbol t.$$
\end{corollary}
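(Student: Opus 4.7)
The plan is to derive this corollary as a direct specialization of Theorem \ref{P1theo2.4} by taking $\phi=\psi$. First I would check that the hypotheses of Theorem \ref{P1theo2.4} collapse to the single wavelet admissibility hypothesis. Setting $\phi=\psi$ in the integrand of condition (\ref{P1eqn13}) reduces it to
$$\int_{\mathbb{R}_{0}^n}\left|\left(\mathfrak{F}_\alpha\left\{e^{-\frac{i}{2}\|\cdot\|^2\cot\alpha}\psi(\cdot)\right\}\right)(\boldsymbol u)\right|^2\frac{1}{|\boldsymbol u|_{p}}d\boldsymbol u,$$
which equals $C_{\psi,\alpha}$ as defined in (\ref{P1eqn5}). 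By Theorem \ref{P1theo1.6}, the wavelet admissibility of $\psi$ gives $C_{\psi,\alpha}<\infty$, so (\ref{P1eqn13}) holds. Similarly, the constant $C_{\phi,\psi,\alpha}$ from (\ref{P1eqn14}) reduces to $C_{\psi,\alpha}$, which is strictly positive (and hence non-zero) since $\psi\neq 0$ and by Parseval (Theorem \ref{P1theo1.1}) $\mathfrak{F}_\alpha\{e^{-\frac{i}{2}\|\cdot\|^2\cot\alpha}\psi(\cdot)\}$ is non-trivial.

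With the hypotheses verified, Theorem \ref{P1theo2.4} applies directly with $\phi=\psi$. The forward direction: given $f\in L^2(\mathbb{R}^n)$ with $W_{\psi,\alpha}f=F$, Theorem \ref{P1theo2.4} yields
$$F(\boldsymbol b_{0},\boldsymbol a_{0})=\int_{\mathbb{R}_{0}^n}\int_{\mathbb{R}^n}F(\boldsymbol b,\boldsymbol a)K_{\psi,\psi,\alpha}(\boldsymbol b_{0},\boldsymbol a_{0};\boldsymbol b,\boldsymbol a)\frac{d\boldsymbol b d\boldsymbol a}{|\boldsymbol a|^2_{p}},$$
where the kernel simplifies to
$$K_{\psi,\psi,\alpha}(\boldsymbol b_{0},\boldsymbol a_{0};\boldsymbol b,\boldsymbol a)=\frac{|C_{\alpha}|^2}{C_{\psi,\alpha}}\int_{\mathbb{R}^n}\psi_{\boldsymbol a,\boldsymbol b,\alpha}(\boldsymbol t)\overline{{\psi_{\boldsymbol a_{0},\boldsymbol b_{0},\alpha}}(\boldsymbol t)}d\boldsymbol t = K_{\psi,\alpha}(\boldsymbol b_{0},\boldsymbol a_{0};\boldsymbol b,\boldsymbol a).$$
This gives exactly the stated reproducing identity.

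Conversely, suppose $F\in L^2\left(\mathbb{R}^n\times\mathbb{R}_{0}^n,\frac{d\boldsymbol b d\boldsymbol a}{|\boldsymbol a|^2_{p}}\right)$ satisfies the reproducing identity with kernel $K_{\psi,\alpha}$. By the converse part of Theorem \ref{P1theo2.4}, the candidate
$$f(\boldsymbol t)=\int_{\mathbb{R}_{0}^n}\int_{\mathbb{R}^n}F(\boldsymbol b,\boldsymbol a)\psi_{\boldsymbol a,\boldsymbol b,\alpha}(\boldsymbol t)\frac{d\boldsymbol b d\boldsymbol a}{|\boldsymbol a|^2_{p}}$$
lies in $L^2(\mathbb{R}^n)$ and satisfies $W_{\psi,\alpha}f=F$. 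Since no new calculation is required, there is no genuine obstacle; the only point deserving attention is confirming the positivity of $C_{\psi,\alpha}$ so that division by $C_{\phi,\psi,\alpha}$ in the kernel expression is legitimate, which as noted follows from Parseval applied to the non-zero function $e^{-\frac{i}{2}\|\cdot\|^2\cot\alpha}\psi(\cdot)$.
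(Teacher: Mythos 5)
Your proposal is correct and follows the same route as the paper, which simply takes $\phi=\psi$ in Theorem \ref{P1theo2.4}; your added verification that condition (\ref{P1eqn13}) reduces to $C_{\psi,\alpha}<\infty$ (guaranteed by admissibility via Theorem \ref{P1theo1.6}) and that $C_{\psi,\psi,\alpha}=C_{\psi,\alpha}>0$ is a worthwhile detail the paper leaves implicit.
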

\begin{proof}
Taking $\phi=\psi$ in the above theorem, the corollary follows.
\end{proof}
\section{Important Inequalities for CFrWT}
\subsection{Heisenberg's uncertainty inequality for CFrWT}
 If $f\in L^2(\mathbb{R}^n),$ then we have the Heisenberg's uncertainty inequality in $n$-dimensions as (\cite{folland1997uncertainty})
 $$\left(\int_{\mathbb{R}^n}\|\boldsymbol t\|^2|f(\boldsymbol t)|^2d\boldsymbol t\right)\left(\int_{\mathbb{R}^n}\|\boldsymbol \xi\|^2|(\mathfrak{F}f)(\boldsymbol \xi)|^2d\boldsymbol \xi\right)\geq\frac{n^2}{4}\|f\|_{L^2(\mathbb{R}^n)}^4,$$
 where $\mathfrak{F}f$ denotes the classical Fourier transform of $f$.\\
 
 Let $f\in L^2(\mathbb{R}^n)$ and none of $\alpha,\gamma$ and $\alpha-\gamma$ are integral multiple of $\pi.$
 Let $H(\boldsymbol\xi)=(\mathfrak{F}_\alpha f)(\boldsymbol\xi)e^{-\frac{i}{2}\|\boldsymbol\xi\|^2\cot\gamma}$ and $h(\boldsymbol t)=\frac{1}{(2\pi)^{\frac{n}{2}}}\displaystyle\int_{\mathbb{R}^n}H(\boldsymbol\xi)e^{i\langle\boldsymbol\xi,\boldsymbol t\rangle}d\boldsymbol\xi.$ Then the Heisenberg's uncertainty inequality for the function $h$ is
 $$\left(\int_{\mathbb{R}^n}\|\boldsymbol t\|^2|h(\boldsymbol t)|^2d\boldsymbol t\right)\left(\int_{\mathbb{R}^n}\|\boldsymbol \xi\|^2|(\mathfrak{F}h)(\boldsymbol \xi)|^2d\boldsymbol \xi\right)\geq\frac{n^2}{4}\|h\|_{L^2(\mathbb{R}^n)}^4.$$
 This implies,
 \begin{equation}\label{P1eqn19}
\left(\int_{\mathbb{R}^n}\|\boldsymbol t\|^2|h(\boldsymbol t)|^2d\boldsymbol t\right)\left(\int_{\mathbb{R}^n}\|\boldsymbol \xi\|^2|(\mathfrak{F_{\alpha}}f)(\boldsymbol \xi)|^2d\boldsymbol \xi\right)\geq\frac{n^2}{4}\|f\|_{L^2(\mathbb{R}^n)}^4.
\end{equation}
Now,
\begin{eqnarray}\label{P1eqn20}
\int_{\mathbb{R}^n}\|\boldsymbol t\|^2|h(\boldsymbol t)|^2d\boldsymbol t &=&|\csc\gamma|^n\int_{\mathbb{R}^n}\|(\csc\gamma)\boldsymbol t\|^2|h((\csc\gamma)\boldsymbol t)|^2d\boldsymbol t.
\end{eqnarray}
 We have,
 \begin{eqnarray}\label{P1eqn21}
 |h((\csc\gamma)\boldsymbol t)|^2&=&\left|\frac{1}{(2\pi)^{\frac{n}{2}}}\int_{\mathbb{R}^n}H(\boldsymbol\xi)e^{i\langle\boldsymbol\xi,(\csc\gamma)\boldsymbol t\rangle}d\boldsymbol\xi\right|^2\notag\\
 &=&\left|\frac{1}{(2\pi)^{\frac{n}{2}}}\int_{\mathbb{R}^n}H(\boldsymbol\xi)e^{i\langle\boldsymbol\xi,\boldsymbol t\rangle\csc\gamma}d\boldsymbol\xi\right|^2\notag\\
 &=&\left|\frac{1}{(2\pi)^{\frac{n}{2}}}\int_{\mathbb{R}^n}(\mathfrak{F}_\alpha f)(\boldsymbol\xi)e^{-\frac{i}{2}\|\boldsymbol\xi\|^2\cot\gamma}e^{i\langle\boldsymbol\xi,\boldsymbol t\rangle\csc\gamma}d\boldsymbol\xi\right|^2\notag\\
 &=&\left|\frac{1}{(2\pi)^{\frac{n}{2}}\overline{C_{\gamma}}}\int_{\mathbb{R}^n}(\mathfrak{F}_\alpha f)(\boldsymbol\xi)K_{-\gamma}(\boldsymbol t,\boldsymbol\xi)d\boldsymbol\xi\right|^2\notag\\
 &=&\frac{1}{(2\pi)^{n}|C_{\gamma}|^2}|(\mathfrak{F}_{\alpha-\gamma} f)(\boldsymbol t)|^2.
\end{eqnarray} 
 Using equation (\ref{P1eqn21}) in equation (\ref{P1eqn20}), we obtain
$$\int_{\mathbb{R}^n}\|\boldsymbol t\|^2|h(\boldsymbol t)|^2d\boldsymbol t=\int_{\mathbb{R}^n}\|(\csc\gamma)\boldsymbol t\|^2|(\mathfrak{F}_{\alpha-\gamma} f)(\boldsymbol t)|^2d\boldsymbol t.$$
Let $\beta=\alpha-\gamma,$ then 
\begin{equation}\label{P1eqn22}
 \int_{\mathbb{R}^n}\|\boldsymbol t\|^2|h(\boldsymbol t)|^2d\boldsymbol t=\int_{\mathbb{R}^n}\|\csc{(\alpha-\beta})\boldsymbol t\|^2|(\mathfrak{F}_\beta f)(\boldsymbol t)|^2d\boldsymbol t.
 \end{equation}
 From equation (\ref{P1eqn19}) and equation (\ref{P1eqn22}), we have
 \begin{equation}\label{P1eqn23}
\left(\int_{\mathbb{R}^n}\|\boldsymbol t\|^2|(\mathfrak{F}_\beta f)(\boldsymbol t)|^2d\boldsymbol t\right)\left(\int_{\mathbb{R}^n}\|\boldsymbol \xi\|^2|(\mathfrak{F_{\alpha}}f)(\boldsymbol \xi)|^2d\boldsymbol \xi\right)\geq\frac{n^2}{4}|\sin{(\alpha-\beta})|^2\|f\|_{L^2(\mathbb{R}^n)}^4.
\end{equation}
Equation (\ref{P1eqn23}) is known as the $n$ dimensional  Heisenberg's uncertainty inequality in two fractional Fourier domain. For the case in one dimension refer \cite{guanlei2009logarithmic}.\\
To prove the main result of this subsection we need the following lemma.
 \begin{lemma}\label{P1lemma3.1}
 Let $\psi$ be a wavelet and $f\in L^2(\mathbb{R}^n)$. Then
 $$\int_{\mathbb{R}_{0}^n}\int_{\mathbb{R}^n}\|\boldsymbol\xi\|^2|(\mathfrak{F}_{\alpha,\boldsymbol b}\{(W_{\psi,\alpha}f)(\boldsymbol b,\boldsymbol a)\})(\boldsymbol\xi)|^2d\boldsymbol\xi\frac{d\boldsymbol a}{|\boldsymbol a|_{p}^2}=\frac{C_{\psi,\alpha}}{|C_{\alpha}|^2}\int_{\mathbb{R}^n}\|\boldsymbol\xi\|^2|(\mathfrak{F}_{\alpha}f)(\boldsymbol\xi)|^2d\boldsymbol\xi,$$
 where $C_{\psi,\alpha}$ is as defined in (\ref{P1eqn5}) and $\mathfrak{F}_{\alpha,\boldsymbol b}$ denotes the FrFT to be taken with  respect to the argument $\boldsymbol b.$
 \end{lemma}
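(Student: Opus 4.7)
The plan is to imitate, essentially verbatim, the computation carried out in the proof of Theorem \ref{P1theo2.1} and in the admissibility calculation immediately preceding Theorem \ref{P1theo1.6}, the only new ingredient being the extra weight $\|\boldsymbol\xi\|^2$ on the outer integrand. Since this weight depends only on $\boldsymbol\xi$ and not on $\boldsymbol a$, it will simply be carried along as a spectator through all the manipulations.

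First I would substitute the closed form for $(\mathfrak{F}_{\alpha,\boldsymbol b}\{(W_{\psi,\alpha}f)(\boldsymbol b,\boldsymbol a)\})(\boldsymbol\xi)$ supplied by Lemma \ref{P1lemma2.3}. Taking the squared modulus annihilates the unimodular factors $1/C_\alpha$ (up to $|C_\alpha|^2$) and $e^{\frac{i}{2}\|\boldsymbol a\boldsymbol\xi\|^2\cot\alpha}$, leaving
\begin{equation*}
\bigl|(\mathfrak{F}_{\alpha,\boldsymbol b}\{(W_{\psi,\alpha}f)(\boldsymbol b,\boldsymbol a)\})(\boldsymbol\xi)\bigr|^2
=\frac{|\boldsymbol a|_p}{|C_\alpha|^2}\bigl|(\mathfrak{F}_\alpha f)(\boldsymbol\xi)\bigr|^2\Bigl|\bigl(\mathfrak{F}_\alpha\{e^{-\frac{i}{2}\|\cdot\|^2\cot\alpha}\psi(\cdot)\}\bigr)(\boldsymbol a\boldsymbol\xi)\Bigr|^2.
\end{equation*}
Multiplying by $\|\boldsymbol\xi\|^2$ and dividing by $|\boldsymbol a|_p^2$ cancels one factor of $|\boldsymbol a|_p$ and leaves $d\boldsymbol a/|\boldsymbol a|_p$ on the $\boldsymbol a$-integral.

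Next, I would invoke Fubini's theorem (legitimate since the integrand is nonnegative) to swap the order of integration, pulling $\|\boldsymbol\xi\|^2|(\mathfrak{F}_\alpha f)(\boldsymbol\xi)|^2$ out of the $\boldsymbol a$-integral:
\begin{equation*}
\frac{1}{|C_\alpha|^2}\int_{\mathbb{R}^n}\|\boldsymbol\xi\|^2\bigl|(\mathfrak{F}_\alpha f)(\boldsymbol\xi)\bigr|^2\left(\int_{\mathbb{R}_0^n}\Bigl|\bigl(\mathfrak{F}_\alpha\{e^{-\frac{i}{2}\|\cdot\|^2\cot\alpha}\psi(\cdot)\}\bigr)(\boldsymbol a\boldsymbol\xi)\Bigr|^2\frac{d\boldsymbol a}{|\boldsymbol a|_p}\right)d\boldsymbol\xi.
\end{equation*}
For fixed $\boldsymbol\xi\in\mathbb{R}_0^n$ (almost everywhere; on the measure-zero complement the inner integral contributes nothing after multiplication by $\|\boldsymbol\xi\|^2|(\mathfrak{F}_\alpha f)(\boldsymbol\xi)|^2$), perform the componentwise change of variables $\boldsymbol u=\boldsymbol a\boldsymbol\xi$. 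Then $d\boldsymbol u=|\boldsymbol\xi|_p\,d\boldsymbol a$ and $|\boldsymbol a|_p=|\boldsymbol u|_p/|\boldsymbol\xi|_p$, so $d\boldsymbol a/|\boldsymbol a|_p=d\boldsymbol u/|\boldsymbol u|_p$; this is exactly the scale-invariant measure used to define $C_{\psi,\alpha}$ in (\ref{P1eqn5}). The inner integral therefore equals $C_{\psi,\alpha}$, independent of $\boldsymbol\xi$, and pulling this constant out yields the desired identity.

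The only delicate point will be justifying the change of variables and Fubini's theorem when $\boldsymbol\xi$ has some (but not all) zero coordinates, i.e.\ when $\boldsymbol\xi\in\mathbb{R}^n\setminus\mathbb{R}_0^n$. However, this set has Lebesgue measure zero in $\mathbb{R}^n$, so it can be discarded from the outer integral without affecting the value; on $\mathbb{R}_0^n$ the map $\boldsymbol a\mapsto\boldsymbol a\boldsymbol\xi$ is a bijection from $\mathbb{R}_0^n$ to itself with Jacobian $|\boldsymbol\xi|_p$, and the computation proceeds without incident. Nonnegativity of the integrand makes Fubini automatic.
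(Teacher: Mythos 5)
Your proposal is correct and follows essentially the same route as the paper: substitute the closed form from Lemma \ref{P1lemma2.3}, take the squared modulus to get the factor $\frac{|\boldsymbol a|_p}{|C_\alpha|^2}\,|(\mathfrak{F}_\alpha f)(\boldsymbol\xi)|^2\,|(\mathfrak{F}_\alpha\{e^{-\frac{i}{2}\|\cdot\|^2\cot\alpha}\psi(\cdot)\})(\boldsymbol a\boldsymbol\xi)|^2$, interchange the order of integration, and use the scale invariance of $d\boldsymbol a/|\boldsymbol a|_p$ under $\boldsymbol u=\boldsymbol a\boldsymbol\xi$ to identify the inner integral with $C_{\psi,\alpha}$. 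Your treatment is in fact slightly more careful than the paper's, since you explicitly justify Fubini by nonnegativity and address the measure-zero set where $\boldsymbol\xi\notin\mathbb{R}_0^n$.
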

 \begin{proof}
 Using lemma \ref{P1lemma2.3}, we have
 \begin{equation*}
 \begin{array}{l}
 \hspace{-3cm}\displaystyle\int_{\mathbb{R}_{0}^n}\int_{\mathbb{R}^n}\|\boldsymbol\xi\|^2|(\mathfrak{F}_{\alpha,\boldsymbol b}\{(W_{\psi,\alpha}f)(\boldsymbol b,\boldsymbol a)\})(\boldsymbol\xi)|^2d\boldsymbol\xi\frac{d\boldsymbol a}{|\boldsymbol a|_{p}^2}\\
 =\frac{1}{|C_{\alpha}|^2}\displaystyle\int_{\mathbb{R}^n}\|\boldsymbol\xi\|^2|(\mathfrak{F}_{\alpha}f)(\boldsymbol\xi)|^2\left(\int_{\mathbb{R}_{0}^n}\left|\left(\mathfrak{F}_{\alpha}\left\{e^{-\frac{i}{2}\|\cdot\|^2\cot\alpha}\psi(\cdot)\right\}\right)(\boldsymbol a\boldsymbol\xi)\right|^2\frac{1}{|\boldsymbol a|_{p}}d\boldsymbol a\right)d\boldsymbol\xi\\
 =\frac{C_{\psi,\alpha}}{|C_{\alpha}|^2}\displaystyle\int_{\mathbb{R}^n}\|\boldsymbol\xi\|^2|(\mathfrak{F}_{\alpha}f)(\boldsymbol\xi)|^2d\boldsymbol\xi.
  \end{array}
 \end{equation*}
This completes the proof.
 \end{proof}
 Now we give the main result of this subsection.
 \begin{theorem}\label{P1theo3.1}
(Heisenberg's Uncertainty Inequality for CFrWT) Let $\psi$ be a wavelet and $f\in L^2(\mathbb{R}^n).$ Let $(W_{\psi,\alpha}f)(\boldsymbol b,\boldsymbol a)$ be the CFrWT of $f$ with respect  to the parameter $\alpha.$ Then
 $$\left(\int_{\mathbb{R}^n}\|\boldsymbol t\|^2|(\mathfrak{F}_{\beta,\boldsymbol b}\{(W_{\psi,\alpha}f)(\boldsymbol b,\boldsymbol a)\})(\boldsymbol t)|^2d\boldsymbol t\frac{d\boldsymbol a}{|\boldsymbol a|^2_{p}}\right)\left(\int_{\mathbb{R}^n}\|\boldsymbol\xi\|^2|(\mathfrak{F}_{\alpha}f)(\boldsymbol\xi)|^2d\boldsymbol\xi\right)\geq\frac{n^2C_{\psi,\alpha}}{4|C_{\alpha}|^2}|\sin(\alpha-\beta)|^2\|f\|^4_{L^2(\mathbb{R}^n)}. $$
 \end{theorem}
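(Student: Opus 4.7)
The plan is to lift the two-domain Heisenberg inequality (\ref{P1eqn23}) from the signal $f$ to the CFrWT $W_{\psi,\alpha}f$, treating the wavelet coefficient function as an $L^{2}(\mathbb{R}^{n})$ function of $\boldsymbol b$ for each fixed $\boldsymbol a\in\mathbb{R}_{0}^{n}$, and then to integrate in $\boldsymbol a$ against the measure $d\boldsymbol a/|\boldsymbol a|_{p}^{2}$, closing the estimate with Cauchy--Schwarz together with Lemma \ref{P1lemma3.1} and Corollary \ref{P1corollary2.2}.

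More precisely, I would first fix $\boldsymbol a\in\mathbb{R}_{0}^{n}$ and apply the $n$-dimensional Heisenberg inequality in two fractional Fourier domains (\ref{P1eqn23}) to the function $\boldsymbol b\mapsto (W_{\psi,\alpha}f)(\boldsymbol b,\boldsymbol a)$, which belongs to $L^{2}(\mathbb{R}^{n})$ by Theorem \ref{P1theo2.1}. Denoting
\begin{equation*}
I_{\gamma}(\boldsymbol a)=\int_{\mathbb{R}^{n}}\|\boldsymbol t\|^{2}\bigl|\bigl(\mathfrak{F}_{\gamma,\boldsymbol b}\{(W_{\psi,\alpha}f)(\boldsymbol b,\boldsymbol a)\}\bigr)(\boldsymbol t)\bigr|^{2}d\boldsymbol t,
\end{equation*}
this yields $I_{\beta}(\boldsymbol a)\,I_{\alpha}(\boldsymbol a)\geq \tfrac{n^{2}}{4}|\sin(\alpha-\beta)|^{2}\,\|(W_{\psi,\alpha}f)(\cdot,\boldsymbol a)\|_{L^{2}(\mathbb{R}^{n})}^{4}$. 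Taking square roots and integrating with respect to $d\boldsymbol a/|\boldsymbol a|_{p}^{2}$ gives
\begin{equation*}
\int_{\mathbb{R}_{0}^{n}}\sqrt{I_{\beta}(\boldsymbol a)\,I_{\alpha}(\boldsymbol a)}\,\frac{d\boldsymbol a}{|\boldsymbol a|_{p}^{2}}\geq \frac{n}{2}|\sin(\alpha-\beta)|\int_{\mathbb{R}_{0}^{n}}\|(W_{\psi,\alpha}f)(\cdot,\boldsymbol a)\|_{L^{2}(\mathbb{R}^{n})}^{2}\,\frac{d\boldsymbol a}{|\boldsymbol a|_{p}^{2}}.
\end{equation*}

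Next I would apply Cauchy--Schwarz to the left-hand side to separate the product, and then square both sides, obtaining
\begin{equation*}
\left(\int_{\mathbb{R}_{0}^{n}}I_{\beta}(\boldsymbol a)\,\frac{d\boldsymbol a}{|\boldsymbol a|_{p}^{2}}\right)\left(\int_{\mathbb{R}_{0}^{n}}I_{\alpha}(\boldsymbol a)\,\frac{d\boldsymbol a}{|\boldsymbol a|_{p}^{2}}\right)\geq \frac{n^{2}}{4}|\sin(\alpha-\beta)|^{2}\left(\int_{\mathbb{R}_{0}^{n}}\|(W_{\psi,\alpha}f)(\cdot,\boldsymbol a)\|_{L^{2}(\mathbb{R}^{n})}^{2}\frac{d\boldsymbol a}{|\boldsymbol a|_{p}^{2}}\right)^{2}.
\end{equation*}
By Lemma \ref{P1lemma3.1} with parameter $\alpha$, the second factor on the left equals $\tfrac{C_{\psi,\alpha}}{|C_{\alpha}|^{2}}\int_{\mathbb{R}^{n}}\|\boldsymbol\xi\|^{2}|(\mathfrak{F}_{\alpha}f)(\boldsymbol\xi)|^{2}d\boldsymbol\xi$, while by Corollary \ref{P1corollary2.2} the squared quantity on the right equals $\bigl(\tfrac{C_{\psi,\alpha}}{|C_{\alpha}|^{2}}\|f\|_{L^{2}(\mathbb{R}^{n})}^{2}\bigr)^{2}$. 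Cancelling one factor of $C_{\psi,\alpha}/|C_{\alpha}|^{2}$ from both sides gives exactly the asserted inequality.

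The main (mild) obstacle is the interchange-of-quantifier step: the pointwise-in-$\boldsymbol a$ Heisenberg bound involves a product that is not an $L^{1}$-linear object, so one cannot integrate it directly; the square-root/Cauchy--Schwarz manoeuvre is the trick that converts a pointwise product inequality into an integrated product inequality while preserving the correct power of $\|f\|_{L^{2}}$. One should also verify that both sides of the integrated inequality make sense (the integrals on the left are finite either trivially or by Lemma \ref{P1lemma3.1}), so that the computation is legitimate rather than a vacuous $\infty\geq \text{finite}$ statement; this will be automatic under the hypothesis that $\psi$ is admissible and $f\in L^{2}(\mathbb{R}^{n})$ with finite second moment of $\mathfrak{F}_{\alpha}f$, otherwise the inequality holds trivially.
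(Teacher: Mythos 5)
Your proposal is correct and follows essentially the same route as the paper: apply the two-domain inequality (\ref{P1eqn23}) to $(W_{\psi,\alpha}f)(\cdot,\boldsymbol a)$ for fixed $\boldsymbol a$, take square roots, integrate against $d\boldsymbol a/|\boldsymbol a|_{p}^{2}$, separate the product via Cauchy--Schwarz (the paper calls it H\"older), and close with Lemma \ref{P1lemma3.1} and the isometry identity of Corollary \ref{P1corollary2.2}. Your remark about checking finiteness so the argument is not vacuous is a sensible refinement the paper omits, but it does not change the structure of the proof.
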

 \begin{proof}
 Replacing $f$ by $(W_{\psi,\alpha}f)(\boldsymbol b,\boldsymbol a)$ in equation (\ref{P1eqn23}), we obtain
 \begin{align*}
\left(\int_{\mathbb{R}^n_{0}}\int_{\mathbb{R}^n}\|\boldsymbol t\|^2|(\mathfrak{F}_{\beta,\boldsymbol b}\{(W_{\psi,\alpha}f)(\boldsymbol b,\boldsymbol a)\})(\boldsymbol t)|^2d\boldsymbol t\right)&\left(\int_{\mathbb{R}^n}\|\boldsymbol\xi\|^2|(\mathfrak{F}_{\alpha,\boldsymbol b}\{(W_{\psi,\alpha}f)(\boldsymbol b,\boldsymbol a)\})(\boldsymbol\xi)|^2d\boldsymbol\xi\right)\\
&\geq \frac{n^2}{4}|\sin(\alpha-\beta)|^2\|(W_{\psi,\alpha}f)(\boldsymbol b,\boldsymbol a)\|^4_{L^2(\mathbb{R}^n)}.
   \end{align*}
  Taking square root on both sides, we get
\begin{align*} 
  \left(\int_{\mathbb{R}^n}\|\boldsymbol t\|^2|(\mathfrak{F}_{\beta,\boldsymbol b}\{(W_{\psi,\alpha}f)(\boldsymbol b,\boldsymbol a)\})(\boldsymbol t)|^2d\boldsymbol t\right)^{\frac{1}{2}}&\left(\int_{\mathbb{R}^n}\|\boldsymbol\xi\|^2|(\mathfrak{F}_{\alpha,\boldsymbol b}\{(W_{\psi,\alpha}f)(\boldsymbol b,\boldsymbol a)\})(\boldsymbol\xi)|^2d\boldsymbol\xi\right)^{\frac{1}{2}}\\ 
 &\geq \frac{n}{2}|\sin(\alpha-\beta)|\|(W_{\psi,\alpha}f)(\cdot,\boldsymbol a)\|^2_{L^2(\mathbb{R}^n)}\\
 &= \frac{n}{2}|\sin(\alpha-\beta)|\int_{\mathbb{R}^n}|(W_{\psi,\alpha}f)(\boldsymbol b,\boldsymbol a)|^2d\boldsymbol b.
 \end{align*} 
 Integrating both sides with respect to the measure $\frac{d\boldsymbol a}{|\boldsymbol a|_{p}^2},$ we have\\
  \begin{align*}
 \int_{\mathbb{R}_{0}^n}\left(\int_{\mathbb{R}^n}\|\boldsymbol t\|^2|(\mathfrak{F}_{\beta,\boldsymbol b}\{(W_{\psi,\alpha}f)(\boldsymbol b,\boldsymbol a)\})(\boldsymbol t)|^2d\boldsymbol t\right)^{\frac{1}{2}}&\left(\int_{\mathbb{R}^n}\|\boldsymbol\xi\|^2|(\mathfrak{F}_{\alpha,\boldsymbol b}\{(W_{\psi,\alpha}f)(\boldsymbol b,\boldsymbol a)\})(\boldsymbol\xi)|^2d\boldsymbol\xi\right)^{\frac{1}{2}}\frac{d\boldsymbol a}{|\boldsymbol a|^2_{p}}\\ 
 &\geq \frac{n}{2}|\sin(\alpha-\beta)|\|W_{\psi,\alpha}f\|^2_{L^2(\mathbb{R}^n\times\mathbb{R}_{0}^n,\frac{d\boldsymbol b d\boldsymbol a}{|\boldsymbol a|^2_{p}})}\\
 &= \frac{nC_{\psi,\alpha}}{2|C_{\alpha}|^2}|\sin(\alpha-\beta)|\|f\|^2_{L^2(\mathbb{R}^n)}.
 \end{align*}
 Using Holder's inequality, we have
\begin{eqnarray*}
 \left(\int_{\mathbb{R}_{0}^n}\int_{\mathbb{R}^n}\|\boldsymbol t\|^2|(\mathfrak{F}_{\beta,\boldsymbol b}\{(W_{\psi,\alpha}f)(\boldsymbol b,\boldsymbol a)\})(\boldsymbol t)|^2d\boldsymbol t\frac{d\boldsymbol a}{|\boldsymbol a|^2_{p}}\right)^{\frac{1}{2}}&\left(\displaystyle\int_{\mathbb{R}_{0}^n}\int_{\mathbb{R}^n}\|\boldsymbol\xi\|^2|(\mathfrak{F}_{\alpha,\boldsymbol b}\{(W_{\psi,\alpha}f)(\boldsymbol b,\boldsymbol a)\})(\boldsymbol\xi)|^2d\boldsymbol\xi\frac{d\boldsymbol a}{|\boldsymbol a|^2_{p}}\right)^{\frac{1}{2}}\\ 
 &\geq \frac{nC_{\psi,\alpha}}{2|C_{\alpha}|^2}|\sin(\alpha-\beta)|\|f\|^2_{L^2(\mathbb{R}^n)}.
 \end{eqnarray*} 
 Squaring both sides, we obtain
 \begin{align*}
 \left(\int_{\mathbb{R}_{0}^n}\int_{\mathbb{R}^n}\|\boldsymbol t\|^2|(\mathfrak{F}_{\beta,\boldsymbol b}\{(W_{\psi,\alpha}f)(\boldsymbol b,\boldsymbol a)\})(\boldsymbol t)|^2d\boldsymbol t\frac{d\boldsymbol a}{|\boldsymbol a|^2_{p}}\right)&\left(\int_{\mathbb{R}_{0}^n}\int_{\mathbb{R}^n}\|\boldsymbol\xi\|^2|(\mathfrak{F}_{\alpha,\boldsymbol b}\{(W_{\psi,\alpha}f)(\boldsymbol b,\boldsymbol a)\})(\boldsymbol\xi)|^2d\boldsymbol\xi\frac{d\boldsymbol a}{|\boldsymbol a|^2_{p}}\right)\\
&\geq \frac{n^2C_{\psi,\alpha}^2}{4|C_{\alpha}|^4}|\sin(\alpha-\beta)|^2\|f\|^4_{L^2(\mathbb{R}^n)}.
\end{align*}
 Using lemma \ref{P1lemma3.1}, we have
 $$ \left(\int_{\mathbb{R}_{0}^n}\int_{\mathbb{R}^n}\|\boldsymbol t\|^2|(\mathfrak{F}_{\beta,\boldsymbol b}\{(W_{\psi,\alpha}f)(\boldsymbol b,\boldsymbol a)\})(\boldsymbol t)|^2d\boldsymbol t\frac{d\boldsymbol a}{|\boldsymbol a|^2_{p}}\right)\left(\frac{C_{\psi,\alpha}}{|C_{\alpha}|^2}\int_{\mathbb{R}^n}\|\boldsymbol\xi\|^2|(\mathfrak{F}_{\alpha}f)(\boldsymbol\xi)|^2d\boldsymbol\xi\right)\geq\frac{n^2C_{\psi,\alpha}^2}{4|C_{\alpha}|^4}|\sin(\alpha-\beta)|^2\|f\|^4_{L^2(\mathbb{R}^n)}.$$
 Therefore,
$$\left(\int_{\mathbb{R}_{0}^n}\int_{\mathbb{R}^n}\|\boldsymbol t\|^2|(\mathfrak{F}_{\beta,\boldsymbol b}\{(W_{\psi,\alpha}f)(\boldsymbol b,\boldsymbol a)\})(\boldsymbol t)|^2d\boldsymbol t\frac{d\boldsymbol a}{|\boldsymbol a|^2_{p}}\right)\left(\int_{\mathbb{R}^n}\|\boldsymbol\xi\|^2|(\mathfrak{F}_{\alpha}f)(\boldsymbol\xi)|^2d\boldsymbol\xi\right)\geq|\sin(\alpha-\beta)|^2\frac{n^2C_{\psi,\alpha}}{4|C_{\alpha}|^2}\|f\|^4_{L^2(\mathbb{R}^n)}.$$
This completes the proof.
 \end{proof}
 The corollary below is a Heisenberg's uncertainty inequality for classical wavelet transform in $n$-dimensions.
 \begin{corollary}
 Let $(W_{\psi}f)(\boldsymbol b,\boldsymbol a)$ be the classical wavelet transform of $f\in L^2(\mathbb{R}^n)$ with respect  to the wavelet $\psi.$ Then
 $$\left(\int_{\mathbb{R}^n}\|\boldsymbol t\|^2|(W_{\psi}f)(\boldsymbol t,\boldsymbol a)|^2d\boldsymbol t\frac{d\boldsymbol a}{|\boldsymbol a|^2_{p}}\right)\left(\int_{\mathbb{R}^n}\|\boldsymbol\xi\|^2|(\mathfrak{F}f)(\boldsymbol\xi)|^2d\boldsymbol\xi\right)\geq\frac{(2\pi)^n n^2C_{\psi}}{4}\|f\|^4_{L^2(\mathbb{R}^n)}, $$
 where $C_\psi=\displaystyle\int_{\mathbb{R}_0^n}\frac{|(\mathfrak{F}\psi)(\boldsymbol u)|}{|\boldsymbol u|_{p}}d\boldsymbol u.$
 
 \end{corollary}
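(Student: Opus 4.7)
The plan is to adapt the proof of Theorem \ref{P1theo3.1} to the purely classical (i.e.\ non-fractional) setting, since one cannot simply substitute $\alpha=\beta=\pi/2$ into the theorem (this would give the trivial factor $|\sin(\alpha-\beta)|^2=0$ and kill the inequality). Instead, I would run the same five-step argument with $\alpha=\pi/2$ from the outset, using the classical Heisenberg inequality recalled at the beginning of \S4.1 directly, rather than its two-FrFT-domain version (\ref{P1eqn23}).

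More concretely, first I apply the classical $n$-dimensional Heisenberg inequality to the function $\boldsymbol b\mapsto (W_\psi f)(\boldsymbol b,\boldsymbol a)$, which for each fixed $\boldsymbol a\in\mathbb{R}_0^n$ lies in $L^2(\mathbb{R}^n)$ by (the $\alpha=\pi/2$ case of) Theorem \ref{P1theo2.1}. This produces, on the right-hand side, $\frac{n^2}{4}\|W_\psi f(\cdot,\boldsymbol a)\|_{L^2(\mathbb{R}^n)}^4$. Taking square roots, integrating against $d\boldsymbol a/|\boldsymbol a|_p^2$, and applying the Cauchy--Schwarz (Hölder) inequality on the left-hand side gives
\[
\Bigl(\!\int_{\mathbb{R}_0^n}\!\!\int_{\mathbb{R}^n}\|\boldsymbol b\|^2|W_\psi f|^2\tfrac{d\boldsymbol b\,d\boldsymbol a}{|\boldsymbol a|_p^2}\Bigr)^{1/2}\!\!\Bigl(\!\int_{\mathbb{R}_0^n}\!\!\int_{\mathbb{R}^n}\|\boldsymbol\xi\|^2|\mathfrak{F}_{\boldsymbol b}\{W_\psi f(\cdot,\boldsymbol a)\}|^2\tfrac{d\boldsymbol\xi\,d\boldsymbol a}{|\boldsymbol a|_p^2}\Bigr)^{1/2}\ge\tfrac{n}{2}\int_{\mathbb{R}_0^n}\|W_\psi f(\cdot,\boldsymbol a)\|_{L^2}^2\tfrac{d\boldsymbol a}{|\boldsymbol a|_p^2},
\]
and the right-hand side equals $\tfrac{n}{2}\cdot(2\pi)^n C_\psi\|f\|_{L^2}^2$ by Corollary \ref{P1corollary2.2} specialized to $\alpha=\pi/2$, using $|C_{\pi/2}|^{-2}=(2\pi)^n$.

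The remaining ingredient is a classical-wavelet analogue of Lemma \ref{P1lemma3.1}: namely
\[
\int_{\mathbb{R}_0^n}\!\int_{\mathbb{R}^n}\|\boldsymbol\xi\|^2\bigl|(\mathfrak{F}_{\boldsymbol b}\{W_\psi f(\cdot,\boldsymbol a)\})(\boldsymbol\xi)\bigr|^2\,d\boldsymbol\xi\,\tfrac{d\boldsymbol a}{|\boldsymbol a|_p^2}=(2\pi)^n C_\psi\int_{\mathbb{R}^n}\|\boldsymbol\xi\|^2|(\mathfrak{F}f)(\boldsymbol\xi)|^2\,d\boldsymbol\xi.
\]
This follows from Lemma \ref{P1lemma2.3} at $\alpha=\pi/2$ (which gives $(\mathfrak{F}_{\boldsymbol b}\{W_\psi f(\cdot,\boldsymbol a)\})(\boldsymbol\xi)=(2\pi)^{n/2}\sqrt{|\boldsymbol a|_p}\,\overline{(\mathfrak{F}\psi)(\boldsymbol a\boldsymbol\xi)}(\mathfrak{F}f)(\boldsymbol\xi)$), Fubini, and the substitution $\boldsymbol u=\boldsymbol a\boldsymbol\xi$ in the inner $\boldsymbol a$-integral.

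Squaring the Cauchy--Schwarz inequality, plugging this evaluation into its left-hand side, and dividing both sides by the common factor $(2\pi)^n C_\psi$ delivers exactly the stated bound $\tfrac{(2\pi)^n n^2 C_\psi}{4}\|f\|_{L^2}^4$. The main obstacle is the bookkeeping in the preceding display: one has to confirm that the constant emerging from $|C_{\pi/2}|^{-2}$, the scaling in Lemma \ref{P1lemma2.3}, and the Jacobian of $\boldsymbol a\mapsto\boldsymbol u$ combine to give precisely $(2\pi)^n C_\psi$, and one should also read $C_\psi$ in the statement as $\int_{\mathbb{R}_0^n}|(\mathfrak{F}\psi)(\boldsymbol u)|^2|\boldsymbol u|_p^{-1}d\boldsymbol u$ (the exponent on $|\mathfrak{F}\psi|$ in the displayed definition of $C_\psi$ in the corollary appears to be a typographical omission of the square, in line with (\ref{P1eqn5}) at $\alpha=\pi/2$).
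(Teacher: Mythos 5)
Your proof is correct, and the bookkeeping you flag does work out: $|C_{\pi/2}|^{-2}=(2\pi)^n$, the factor $(2\pi)^n|\boldsymbol a|_p$ coming from Lemma \ref{P1lemma2.3} at $\alpha=\pi/2$ combines with the measure $d\boldsymbol a/|\boldsymbol a|_p^2$ and the substitution $\boldsymbol u=\boldsymbol a\boldsymbol\xi$ to produce exactly $(2\pi)^nC_\psi$, and you are right that the displayed definition of $C_\psi$ is missing a square on $|(\mathfrak{F}\psi)(\boldsymbol u)|$. The one thing you misjudged is the premise that direct substitution into Theorem \ref{P1theo3.1} is impossible: the corollary's first factor contains $(W_\psi f)(\boldsymbol t,\boldsymbol a)$ itself rather than a fractional Fourier transform of it, so the intended specialization is $\alpha=\pi/2$ together with $\beta=0$ (so that $\mathfrak{F}_{\beta,\boldsymbol b}$ is the identity), which gives $|\sin(\alpha-\beta)|^2=1$ and the constant $\frac{n^2C_{\psi,\pi/2}}{4|C_{\pi/2}|^2}=\frac{(2\pi)^nn^2C_\psi}{4}$ immediately --- this is evidently the route the paper has in mind, since it offers no separate proof. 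That said, your rederivation is not wasted effort: the two-domain inequality (\ref{P1eqn23}) underlying the theorem was derived under the assumption that $\gamma$ and $\alpha-\gamma$ (hence $\beta=\alpha-\gamma$) avoid integral multiples of $\pi$, so $\beta=0$ sits formally outside its stated scope; anchoring the argument directly in the classical Heisenberg inequality, as you do, closes that small gap cleanly. Otherwise the two routes consist of the same five steps (pointwise Heisenberg in $\boldsymbol b$ for fixed $\boldsymbol a$, square root, integration against $d\boldsymbol a/|\boldsymbol a|_p^2$, Cauchy--Schwarz, then Corollary \ref{P1corollary2.2} and Lemma \ref{P1lemma3.1} specialized to $\alpha=\pi/2$).
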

 \subsection{Local Uncertainty Inequality for CFrWT}
 With the definition of classical Fourier transform for $\alpha=\frac{\pi}{2}$, the local uncertainty principle can be stated as follows (\cite{folland1997uncertainty}):
 \begin{itemize}
 \item[(i)] For $0<\theta<\frac{n}{2},$ there exists a constant $A_{\theta}$ such that for all measurable subset $E$ of $\mathbb{R}^n$ and all $f\in L^2(\mathbb{R}^n),$
 $$\int_E|(\mathfrak{F}f)(\boldsymbol\xi)|^2d\boldsymbol\xi\leq A_{\theta}(\lambda(E))^\frac{2\theta}{n}\|\|\boldsymbol x\|^{\theta}f\|^2_{L^2(\mathbb{R}^n)}.$$
 \item[(ii)] For $\theta>\frac{n}{2},$ there exists a constant $A_{\theta}$ such that for all measurable subset $E$ of $\mathbb{R}^n$ and all $f\in L^2(\mathbb{R}^n),$
 $$\int_E|(\mathfrak{F}f)(\boldsymbol\xi)|^2d\boldsymbol\xi\leq A_{\theta}\lambda(E)\|f\|^{2-\frac{n}{\theta}}_{L^2(\mathbb{R}^n)}\|\|\boldsymbol x\|^{\theta}f\|^{\frac{n}{\theta}}_{L^2(\mathbb{R}^n)},$$
 where $\lambda(E)$ denotes the Lebesgue measure of $E.$
\end{itemize}  
 Let $f\in L^2(\mathbb{R}^n)$ and none of $\alpha,\gamma$ and $\alpha-\gamma$ are integral multiple of $\pi.$\\
Let $H(\boldsymbol\xi)=(\mathfrak{F}_\alpha f)(\boldsymbol\xi)e^{-\frac{i}{2}\|\boldsymbol\xi\|^2\cot\gamma}$ and $h(\boldsymbol t)=\frac{1}{(2\pi)^{\frac{n}{2}}}\displaystyle\int_{\mathbb{R}^n}H(\boldsymbol\xi)e^{i\langle\boldsymbol\xi,\boldsymbol t\rangle}d\boldsymbol\xi.$ Then 
\begin{equation}\label{P1eqn24}
 \int_E|(\mathfrak{F}h)(\boldsymbol \xi)|^2d\boldsymbol \xi=\int_E|(\mathfrak{F_{\alpha}}f)(\boldsymbol \xi)|^2d\boldsymbol \xi.
\end{equation}
Now,
\begin{eqnarray}\label{P1eqn25}
\int_{\mathbb{R}^n}\|\boldsymbol t\|^{2\theta}|h(\boldsymbol t)|^2d\boldsymbol t &=&|\csc\gamma|^n\int_{\mathbb{R}^n}\|(\csc\gamma)\boldsymbol t\|^{2\theta}|h((\csc\gamma)\boldsymbol t)|^2d\boldsymbol t.
\end{eqnarray}
 Using equation (\ref{P1eqn21}) in equation (\ref{P1eqn25}), yields
 $$\int_{\mathbb{R}^n}\|\boldsymbol t\|^{2\theta}|h(\boldsymbol t)|^2d\boldsymbol t=\int_{\mathbb{R}^n}\|(\csc\gamma)\boldsymbol t\|^{2\theta}\big|(\mathfrak{F}_{\alpha-\gamma} f)(\boldsymbol t)\big|^2d\boldsymbol t.$$
Putting $\beta=\alpha-\gamma$, we obtain
\begin{equation}\label{P1eqn26}
\int_{\mathbb{R}^n}\|\boldsymbol t\|^{2\theta}|h(\boldsymbol t)|^2d\boldsymbol t=\int_{\mathbb{R}^n}\|\csc{(\alpha-\beta})\boldsymbol t\|^{2\theta}\big|(\mathfrak{F}_\beta f)(\boldsymbol t)\big|^2d\boldsymbol t.
\end{equation} 
 \begin{lemma}\label{P1lemma4.1}
(Local uncertainty inequality in two fractional Fourier transform domain) For a given positive $\theta\neq\frac{n}{2}$ there exists a constant $A_{\theta}$ such that for all measurable subset $E$ of $\mathbb{R}^n$ and all $f\in L^2(\mathbb{R}^n),$ 
 $$ \int_E|(\mathfrak{F}_{\alpha}f)(\boldsymbol\xi)|^2d\boldsymbol\xi\leq \frac{A_{\theta}}{|\sin(\alpha-\beta)|^{2\theta}}(\lambda(E))^\frac{2\theta}{n}\|\|\boldsymbol x\|^{\theta}(\mathfrak{F}_{\beta}f)\|^2_{L^2(\mathbb{R}^n)},\ \mbox{if}\ 0<\theta<\frac{n}{2}$$ and
 $$\int_E|(\mathfrak{F}_{\alpha}f)(\boldsymbol\xi)|^2d\boldsymbol\xi\leq \frac{A_{\theta}}{|\sin(\alpha-\beta)|^{n}}\lambda(E)\|f\|^{2-\frac{n}{\theta}}_{L^2(\mathbb{R}^n)}\|\|\boldsymbol x\|^{\theta}(\mathfrak{F}_{\beta}f)\|^{\frac{n}{\theta}}_{L^2(\mathbb{R}^n)},\ \mbox{if}\ \theta>\frac{n}{2}.$$
 \end{lemma}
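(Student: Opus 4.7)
The plan is to exploit the auxiliary function $h$ already constructed in the paragraphs preceding the lemma and simply apply the classical local uncertainty inequality to it. Recall that $H(\boldsymbol\xi)=(\mathfrak{F}_\alpha f)(\boldsymbol\xi)e^{-\frac{i}{2}\|\boldsymbol\xi\|^2\cot\gamma}$ and $h$ is defined so that $\mathfrak{F}h=H$; equations (\ref{P1eqn24}) and (\ref{P1eqn26}) already encode the two transfer identities we need, namely
\[
\int_E|(\mathfrak{F}h)(\boldsymbol\xi)|^2\,d\boldsymbol\xi=\int_E|(\mathfrak{F}_{\alpha}f)(\boldsymbol\xi)|^2\,d\boldsymbol\xi,\qquad \|\|\boldsymbol x\|^{\theta}h\|_{L^2(\mathbb{R}^n)}^2=|\csc(\alpha-\beta)|^{2\theta}\,\|\|\boldsymbol x\|^{\theta}(\mathfrak{F}_\beta f)\|_{L^2(\mathbb{R}^n)}^2,
\]
where the second follows from (\ref{P1eqn26}) after pulling the constant factor $|\csc(\alpha-\beta)|^{2\theta}$ out of the integrand.

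For the range $0<\theta<\tfrac{n}{2}$ I would apply case (i) of the classical local uncertainty inequality (stated just before the lemma) directly to $h\in L^2(\mathbb{R}^n)$, obtaining
\[
\int_E|(\mathfrak{F}h)(\boldsymbol\xi)|^2\,d\boldsymbol\xi\le A_\theta\bigl(\lambda(E)\bigr)^{\frac{2\theta}{n}}\|\|\boldsymbol x\|^{\theta}h\|_{L^2(\mathbb{R}^n)}^2,
\]
and then substitute the two identities above. The factor $|\csc(\alpha-\beta)|^{2\theta}$ that comes out of the right-hand side is exactly $|\sin(\alpha-\beta)|^{-2\theta}$, which reproduces the first displayed inequality in the statement.

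For $\theta>\tfrac{n}{2}$, I would repeat the procedure with case (ii) of the classical local uncertainty inequality applied to $h$. Here an additional ingredient is needed: one must replace $\|h\|_{L^2(\mathbb{R}^n)}$ by $\|f\|_{L^2(\mathbb{R}^n)}$. This is immediate from the Parseval identity for the classical Fourier transform together with Theorem \ref{P1theo1.2}, since $\|h\|_{L^2}=\|H\|_{L^2}=\|\mathfrak{F}_\alpha f\|_{L^2}=\|f\|_{L^2}$ (the pointwise unimodular factor $e^{-\frac{i}{2}\|\boldsymbol\xi\|^2\cot\gamma}$ drops out). The rescaling of the weighted norm produces the factor $|\csc(\alpha-\beta)|^{n/\theta\cdot\theta}=|\sin(\alpha-\beta)|^{-n}$, matching the second displayed inequality.

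There is no genuine obstacle here; the whole content of the lemma is to transport the classical statement through the change of variables $\boldsymbol t\mapsto(\csc\gamma)\boldsymbol t$ and the identification $\mathfrak{F}h=H$. The only points requiring a little care are the bookkeeping of the exponents on $|\csc(\alpha-\beta)|$ in the two cases (which is why the power on $|\sin(\alpha-\beta)|$ in the statement is $2\theta$ versus $n$), and the justification that the hypotheses allow division by $\sin(\alpha-\beta)$, i.e.\ that $\alpha-\beta=\gamma$ is not an integral multiple of $\pi$, which was assumed in the setup preceding the lemma.
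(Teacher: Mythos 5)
Your proposal is correct and follows essentially the same route as the paper: apply the classical local uncertainty inequality (cases (i) and (ii)) to the auxiliary function $h$ with $\mathfrak{F}h=H$, then transfer back via the identities (\ref{P1eqn24}), (\ref{P1eqn26}) and $\|h\|_{L^2(\mathbb{R}^n)}=\|f\|_{L^2(\mathbb{R}^n)}$, keeping track of the powers of $|\csc(\alpha-\beta)|$. The exponent bookkeeping ($2\theta$ versus $\theta\cdot\tfrac{n}{\theta}=n$) and the use of Parseval for $\|h\|_{L^2(\mathbb{R}^n)}$ match the paper's equations (\ref{P1eqn28}), (\ref{P1eqn30}) and (\ref{P1eqn31}) exactly.
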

 \begin{proof}
Consider the function $h$ defined by
$$h(\boldsymbol t)=\frac{1}{(2\pi)^{\frac{n}{2}}}\int_{\mathbb{R}^n}H(\boldsymbol\xi)e^{i\langle\boldsymbol\xi,\boldsymbol t\rangle}d\boldsymbol\xi,\ \mbox{where}\  H(\boldsymbol\xi)=(\mathfrak{F}_\alpha f)(\boldsymbol\xi)e^{-\frac{i}{2}\|\boldsymbol\xi\|^2\cot\gamma}.$$ 
We have, for $0<\theta<\frac{n}{2}$
\begin{equation}\label{P1eqn27}
\int_E|(\mathfrak{F}h)(\boldsymbol\xi)|^2d\boldsymbol\xi\leq A_{\theta}(\lambda(E))^\frac{2\theta}{n}\|\|\boldsymbol x\|^{\theta}h\|^2_{L^2(\mathbb{R}^n)}.
\end{equation}
From (\ref{P1eqn24}), (\ref{P1eqn26}) and (\ref{P1eqn27}) , we get
$$\int_E|(\mathfrak{F_{\alpha}}f)(\boldsymbol \xi)|^2d\boldsymbol \xi\leq A_{\theta}(\lambda(E))^\frac{2\theta}{n} \int_{\mathbb{R}^n}\|\csc{(\alpha-\beta})\boldsymbol x\|^{2\theta}\big|(\mathfrak{F}_\beta f)(\boldsymbol x)\big|^2d\boldsymbol x,$$
which implies
\begin{equation}\label{P1eqn28}
\int_E|(\mathfrak{F}_{\alpha}f)(\boldsymbol\xi)|^2d\boldsymbol\xi\leq \frac{A_{\theta}}{|\sin(\alpha-\beta)|^{2\theta}}(\lambda(E))^\frac{2\theta}{n}\|\|\boldsymbol x\|^{\theta}(\mathfrak{F}_{\beta}f)\|^2_{L^2(\mathbb{R}^n)}.
\end{equation}
Again, for $\theta>\frac{n}{2}$
\begin{equation}\label{P1eqn29}
\int_E|(\mathfrak{F}h)(\boldsymbol\xi)|^2d\boldsymbol\xi\leq A_{\theta}\lambda(E)\|h\|^{2-\frac{n}{\theta}}_{L^2(\mathbb{R}^n)}\|\|\boldsymbol x\|^{\theta}h\|^{\frac{n}{\theta}}_{L^2(\mathbb{R}^n)}.
\end{equation}
Now,
\begin{eqnarray}\label{P1eqn30}
\|h\|_{L^2(\mathbb{R}^n)}&=&\|H\|_{L^2(\mathbb{R}^n)}\notag\\ 
&=&\left(\int_{\mathbb{R}^n}|H(\boldsymbol\xi)|^2d\boldsymbol \xi\right)^\frac{1}{2}\notag\\
&=&\left(\int_{\mathbb{R}^n}|(\mathfrak{F}_\alpha f)(\boldsymbol\xi)|^2d\boldsymbol\xi\right)^\frac{1}{2}\notag\\
&=&\|f\|_{L^2(\mathbb{R}^n)}.
\end{eqnarray}
From (\ref{P1eqn24}), (\ref{P1eqn26}), (\ref{P1eqn29}) and (\ref{P1eqn30}), we get
$$\int_E|(\mathfrak{F}_{\alpha}f)(\boldsymbol\xi)|^2d\boldsymbol\xi\leq A_{\theta}\lambda(E)\|f\|^{2-\frac{n}{\theta}}_{L^2(\mathbb{R}^n)}\left(\int_{\mathbb{R}^n}|\|\csc(\alpha-\beta)\boldsymbol x\|^{\theta}(\mathfrak{F}_{\beta}f)(\boldsymbol x)|^2d\boldsymbol x\right)^{\frac{n}{2\theta}},$$
which implies
\begin{equation}\label{P1eqn31}
\int_E|(\mathfrak{F}_{\alpha}f)(\boldsymbol\xi)|^2d\boldsymbol\xi\leq \frac{A_{\theta}}{|\sin(\alpha-\beta)|^{n}}\lambda(E)\|f\|^{2-\frac{n}{\theta}}_{L^2(\mathbb{R}^n)}\|\|\boldsymbol x\|^{\theta}(\mathfrak{F}_{\beta}f)\|^{\frac{n}{\theta}}_{L^2(\mathbb{R}^n)}.
\end{equation}
This completes the lemma.
 \end{proof}
 Before we state the main result of this subsection, we prove the following lemma.
 \begin{lemma}\label{P1lemma4.2}
 Let $\psi$ be a wavelets and $f\in L^2(\mathbb{R}^n)$. Let $E$ be a measurable subset of $\mathbb{R}^n.$ Then
 $$\int_{\mathbb{R}_{0}^n}\int_{E}|(\mathfrak{F}_{\alpha,\boldsymbol b}\{(W_{\psi,\alpha}f)(\boldsymbol b,\boldsymbol a)\})(\boldsymbol\xi)|^2d\boldsymbol\xi\frac{d\boldsymbol a}{|\boldsymbol a|_{p}^2}=\frac{C_{\psi,\alpha}}{|C_{\alpha}|^2}\int_{E}|(\mathfrak{F}_{\alpha}f)(\boldsymbol\xi)|^2d\boldsymbol\xi,$$
 where $C_{\psi,\alpha}$ is as defined in (\ref{P1eqn5}) and $\mathfrak{F}_{\alpha,\boldsymbol b}$ denotes the FrFT to be taken with  respect to the argument $\boldsymbol b.$
 \end{lemma}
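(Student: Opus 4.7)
The plan is to mirror the argument used in the preceding Lemma \ref{P1lemma3.1} almost verbatim; the presence of the measurable subset $E$ in place of $\mathbb{R}^n$ and the absence of the weight $\|\boldsymbol\xi\|^2$ affect none of the manipulations, because the proof relies only on Fubini and a multiplicative change of variables in the $\boldsymbol a$-variable.

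First I would invoke Lemma \ref{P1lemma2.3} to rewrite
\[
|(\mathfrak{F}_{\alpha,\boldsymbol b}\{(W_{\psi,\alpha}f)(\boldsymbol b,\boldsymbol a)\})(\boldsymbol\xi)|^2
=\frac{|\boldsymbol a|_{p}}{|C_{\alpha}|^2}\left|\left(\mathfrak{F}_{\alpha}\left\{e^{-\frac{i}{2}\|\cdot\|^2\cot\alpha}\psi(\cdot)\right\}\right)(\boldsymbol a\boldsymbol\xi)\right|^2|(\mathfrak{F}_{\alpha}f)(\boldsymbol\xi)|^2,
\]
observing that the exponential and the factor $1/\overline{C_{\alpha}}$ contribute only unimodular or squared-modulus constants. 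Substituting this into the double integral gives
\[
\int_{\mathbb{R}_{0}^n}\int_{E}|(\mathfrak{F}_{\alpha,\boldsymbol b}\{(W_{\psi,\alpha}f)(\boldsymbol b,\boldsymbol a)\})(\boldsymbol\xi)|^2 d\boldsymbol\xi\,\frac{d\boldsymbol a}{|\boldsymbol a|_{p}^2}
=\frac{1}{|C_{\alpha}|^2}\int_{\mathbb{R}_{0}^n}\int_{E}|(\mathfrak{F}_{\alpha}f)(\boldsymbol\xi)|^2\left|\left(\mathfrak{F}_{\alpha}\left\{e^{-\frac{i}{2}\|\cdot\|^2\cot\alpha}\psi(\cdot)\right\}\right)(\boldsymbol a\boldsymbol\xi)\right|^2 d\boldsymbol\xi\,\frac{d\boldsymbol a}{|\boldsymbol a|_{p}}.
\]

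Next I would apply Fubini's theorem to interchange the $\boldsymbol\xi$- and $\boldsymbol a$-integrals (the integrand is non-negative, so the hypotheses are automatic), pulling $|(\mathfrak{F}_\alpha f)(\boldsymbol\xi)|^2$ outside the $\boldsymbol a$-integral. Then for each fixed $\boldsymbol\xi\in\mathbb{R}_0^n$, I would perform the componentwise substitution $\boldsymbol u=\boldsymbol a\boldsymbol\xi$. Since $d\boldsymbol u=|\boldsymbol\xi|_{p}\,d\boldsymbol a$ and $|\boldsymbol a|_{p}=|\boldsymbol u|_{p}/|\boldsymbol\xi|_{p}$, the multiplicative Haar-like measure $d\boldsymbol a/|\boldsymbol a|_{p}$ is invariant, giving
\[
\int_{\mathbb{R}_{0}^n}\left|\left(\mathfrak{F}_{\alpha}\left\{e^{-\frac{i}{2}\|\cdot\|^2\cot\alpha}\psi(\cdot)\right\}\right)(\boldsymbol a\boldsymbol\xi)\right|^2\frac{d\boldsymbol a}{|\boldsymbol a|_{p}}
=\int_{\mathbb{R}_{0}^n}\left|\left(\mathfrak{F}_{\alpha}\left\{e^{-\frac{i}{2}\|\cdot\|^2\cot\alpha}\psi(\cdot)\right\}\right)(\boldsymbol u)\right|^2\frac{d\boldsymbol u}{|\boldsymbol u|_{p}}=C_{\psi,\alpha},
\]
by the definition in (\ref{P1eqn5}). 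Feeding this back into the previous display yields the claimed identity. Nowhere does the set $E$ enter except as the domain of the $\boldsymbol\xi$-integral, which is exactly why $E$ can replace $\mathbb{R}^n$ compared with Lemma \ref{P1lemma3.1}.

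There is no real obstacle here: the only point requiring mild care is justifying that $\boldsymbol\xi\in E$ may be assumed to lie in $\mathbb{R}_0^n$ (so the change of variables $\boldsymbol u=\boldsymbol a\boldsymbol\xi$ is well-defined), which is fine because the slice $E\cap(\mathbb{R}^n\setminus\mathbb{R}_0^n)$ has Lebesgue measure zero and contributes nothing to either side. Once that is noted, the computation is identical in structure to equation (\ref{P1eqn6}) and to Lemma \ref{P1lemma3.1}, so the proof can be written very compactly by simply pointing to Lemma \ref{P1lemma2.3}, Fubini and the substitution $\boldsymbol u=\boldsymbol a\boldsymbol\xi$.
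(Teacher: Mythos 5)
Your proposal is correct and follows essentially the same route as the paper's own proof: apply Lemma \ref{P1lemma2.3}, interchange the order of integration, and reduce the inner $\boldsymbol a$-integral to $C_{\psi,\alpha}$ via the substitution $\boldsymbol u=\boldsymbol a\boldsymbol\xi$. Your extra remark about discarding the null set where $\boldsymbol\xi$ has a vanishing component is a small refinement the paper leaves implicit.
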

 \begin{proof}
 Using lemma (\ref{P1lemma2.3}), we have
 \begin{eqnarray*}
 \int_{\mathbb{R}_{0}^n}\int_E|(\mathfrak{F}_{\alpha,\boldsymbol b}\{(W_{\psi,\alpha}f)(\boldsymbol b,\boldsymbol a)\})(\boldsymbol\xi)|^2d\boldsymbol\xi\frac{d\boldsymbol a}{|\boldsymbol a|_{p}^2}&=&\int_{\mathbb{R}_{0}^n}\int_E\frac{|\boldsymbol a|_{p}}{|C_{\alpha}|^2}\left|\left(\mathfrak{F}_{\alpha}\left\{e^{-\frac{i}{2}\|\cdot\|^2\cot\alpha}\psi\right\}\right)(\boldsymbol a\boldsymbol\xi)\right|^2|(\mathfrak{F}_{\alpha}f)(\boldsymbol\xi)|^2 d\boldsymbol\xi\frac{d\boldsymbol a}{|\boldsymbol a|_{p}^2}\\
 &=&\frac{1}{|C_{\alpha}|^2}\int_E|(\mathfrak{F}_{\alpha}f)(\boldsymbol\xi)|^2\left(\int_{\mathbb{R}_{0}^n}\left|\left(\mathfrak{F}_{\alpha}\left\{e^{-\frac{i}{2}\|\cdot\|^2\cot\alpha}\psi(\cdot)\right\}\right)(\boldsymbol a\boldsymbol\xi)\right|^2\frac{1}{|\boldsymbol a|_{p}}d\boldsymbol a\right)d\boldsymbol\xi\\
 &=&\frac{C_{\psi,\alpha}}{|C_{\alpha}|^2}\int_E|(\mathfrak{F}_{\alpha}f)(\boldsymbol\xi)|^2d\boldsymbol\xi.
 \end{eqnarray*}
This completes the proof.
 \end{proof}
We now prove the main result of this subsection.
 \begin{theorem}\label{P1theo4.1}
(Local uncertainty Inequality for CFrWT) Let $\psi$ be a wavelet and $\theta\neq\frac{n}{2}$ be a given positive number. Then there exists a constant $A_{\theta}$ such that for all measurable subset $E$ of $\mathbb{R}^n$ and all $f\in L^2(\mathbb{R}^n),$
\begin{equation*}
\begin{array}{l}
\hspace{-1.5cm}\displaystyle\int_E|(\mathfrak{F}_{\alpha}f)(\boldsymbol\xi)|^2d\boldsymbol\xi\\
\leq\frac{A_{\theta}{|C_{\alpha}|^2}}{|\sin(\alpha-\beta)|^{2\theta}C_{\psi,\alpha}}(\lambda(E))^\frac{2\theta}{n}\displaystyle\int_{\mathbb{R}_{0}^n}\int_{\mathbb{R}^n}\|\boldsymbol x\|^{2\theta}|(\mathfrak{F}_{\beta,\boldsymbol b}\{(W_{\psi,\alpha}f)(\boldsymbol b,\boldsymbol a)\})(\boldsymbol x)|^2d\boldsymbol x\frac{d\boldsymbol a}{|\boldsymbol a|^2_{p}},\ \mbox{if}\ 0<\theta<\frac{n}{2}
\end{array}
\end{equation*} 
 and
\begin{equation*}
\begin{array}{l}
 \hspace{-1cm}\displaystyle\int_E|(\mathfrak{F}_{\alpha}f)(\boldsymbol\xi)|^2d\boldsymbol\xi\\[6pt] \leq\frac{A_{\theta}|C_{\alpha}|^2}{|\sin(\alpha-\beta)|^{n}C_{\psi,\alpha}}\lambda(E)\displaystyle\int_{\mathbb{R}_{0}^n}\|(W_{\psi,\alpha}f)(\cdot,\boldsymbol a)\|^{2-\frac{n}{\theta}}_{L^2(\mathbb{R}^n)}\|\|\boldsymbol x\|^{\theta}(\mathfrak{F}_{\beta,\boldsymbol b}\{(W_{\psi,\alpha}f)(\boldsymbol b,\boldsymbol a)\})\|^{\frac{n}{\theta}}_{L^2(\mathbb{R}^n)}\frac{d\boldsymbol a}{|\boldsymbol a|^2_{p}},\ \mbox{if}\ \theta>\frac{n}{2}.
 \end{array}
\end{equation*} 
 \end{theorem}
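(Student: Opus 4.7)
The plan is to mimic the structure of the Heisenberg proof (Theorem~\ref{P1theo3.1}), using Lemma~\ref{P1lemma4.1} as the pointwise-in-$\boldsymbol a$ estimate and Lemma~\ref{P1lemma4.2} to translate integrals of $|\mathfrak{F}_{\alpha,\boldsymbol b}\{W_{\psi,\alpha}f\}|^2$ over $E$ into integrals of $|\mathfrak{F}_\alpha f|^2$ over $E$. The key observation is that, for each fixed $\boldsymbol a\in\mathbb{R}_0^n$, the map $\boldsymbol b\mapsto (W_{\psi,\alpha}f)(\boldsymbol b,\boldsymbol a)$ lies in $L^2(\mathbb{R}^n)$ (by Theorem~\ref{P1theo2.1} combined with Fubini), so Lemma~\ref{P1lemma4.1} may be applied to it with FrFT-parameter $\alpha$ on the left-hand side and FrFT-parameter $\beta$ on the right-hand side.

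First I would treat the case $0<\theta<\tfrac{n}{2}$. Substituting the slice $(W_{\psi,\alpha}f)(\cdot,\boldsymbol a)$ for $f$ in the first inequality of Lemma~\ref{P1lemma4.1} gives
\begin{equation*}
\int_E \bigl|(\mathfrak{F}_{\alpha,\boldsymbol b}\{(W_{\psi,\alpha}f)(\boldsymbol b,\boldsymbol a)\})(\boldsymbol\xi)\bigr|^2 d\boldsymbol\xi \leq \frac{A_\theta}{|\sin(\alpha-\beta)|^{2\theta}}(\lambda(E))^{2\theta/n}\int_{\mathbb{R}^n}\|\boldsymbol x\|^{2\theta}\bigl|(\mathfrak{F}_{\beta,\boldsymbol b}\{(W_{\psi,\alpha}f)(\boldsymbol b,\boldsymbol a)\})(\boldsymbol x)\bigr|^2 d\boldsymbol x.
\end{equation*}
Next I integrate both sides against the measure $\frac{d\boldsymbol a}{|\boldsymbol a|_p^2}$ on $\mathbb{R}_0^n$. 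On the right this produces exactly the double integral appearing in the statement. On the left I invoke Lemma~\ref{P1lemma4.2}, which rewrites the left-hand side as $\frac{C_{\psi,\alpha}}{|C_\alpha|^2}\int_E |(\mathfrak{F}_\alpha f)(\boldsymbol\xi)|^2 d\boldsymbol\xi$. Multiplying through by $\frac{|C_\alpha|^2}{C_{\psi,\alpha}}$ yields the first claimed inequality.

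For the case $\theta>\tfrac{n}{2}$, I proceed in the same spirit but use the second inequality of Lemma~\ref{P1lemma4.1}. Applied to $(W_{\psi,\alpha}f)(\cdot,\boldsymbol a)$ it gives, for each $\boldsymbol a$,
\begin{equation*}
\int_E \bigl|(\mathfrak{F}_{\alpha,\boldsymbol b}\{(W_{\psi,\alpha}f)(\boldsymbol b,\boldsymbol a)\})(\boldsymbol\xi)\bigr|^2 d\boldsymbol\xi \leq \frac{A_\theta\,\lambda(E)}{|\sin(\alpha-\beta)|^n}\,\|(W_{\psi,\alpha}f)(\cdot,\boldsymbol a)\|_{L^2(\mathbb{R}^n)}^{2-n/\theta}\bigl\|\|\boldsymbol x\|^{\theta}(\mathfrak{F}_{\beta,\boldsymbol b}\{(W_{\psi,\alpha}f)(\boldsymbol b,\boldsymbol a)\})\bigr\|_{L^2(\mathbb{R}^n)}^{n/\theta}.
\end{equation*}
Integrating in $\boldsymbol a$ against $\frac{d\boldsymbol a}{|\boldsymbol a|_p^2}$ gives precisely the integral on the right-hand side of the second claimed inequality, while the left-hand side, after invoking Lemma~\ref{P1lemma4.2}, becomes $\frac{C_{\psi,\alpha}}{|C_\alpha|^2}\int_E|(\mathfrak{F}_\alpha f)(\boldsymbol\xi)|^2d\boldsymbol\xi$. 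Rearranging finishes this case.

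I do not expect a genuine obstacle: unlike the Heisenberg argument, the right-hand sides here are already $L^2$-type quantities (or products of such), so no Hölder step or square-root-then-integrate-then-square trick is needed. The only minor point to handle carefully is justifying that Fubini is applicable when exchanging the $\boldsymbol a$-integration with the integrations in $\boldsymbol b$ and $\boldsymbol x$; this is immediate from the non-negativity of the integrands and the fact that, by Theorem~\ref{P1theo2.1}, everything involved is finite whenever $f\in L^2(\mathbb{R}^n)$ and the right-hand side is finite (otherwise the inequality is trivial).
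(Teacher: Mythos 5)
Your proposal is correct and follows essentially the same route as the paper: apply Lemma \ref{P1lemma4.1} to the slice $(W_{\psi,\alpha}f)(\cdot,\boldsymbol a)$, integrate in $\boldsymbol a$ against $\frac{d\boldsymbol a}{|\boldsymbol a|_p^2}$, and convert the left-hand side via Lemma \ref{P1lemma4.2}. Your observation that no H\"older or square-root step is needed here, in contrast to the Heisenberg case, matches the paper's argument exactly.
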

\begin{proof}
Replacing $f$ by $(W_{\psi,\alpha}f)(\boldsymbol b,\boldsymbol a)$ in (\ref{P1eqn28}), we obtain
$$\int_E|(\mathfrak{F}_{\alpha,\boldsymbol b}\{(W_{\psi,\alpha}f)(\boldsymbol b,\boldsymbol a)\})(\boldsymbol\xi)|^2d\boldsymbol\xi\leq \frac{A_{\theta}}{|\sin(\alpha-\beta)|^{2\theta}}(\lambda(E))^\frac{2\theta}{n}\|\|\boldsymbol x\|^{\theta}(\mathfrak{F}_{\beta,\boldsymbol b}\{(W_{\psi,\alpha}f)(\boldsymbol b,\boldsymbol a)\})\|^2_{L^2(\mathbb{R}^n)}.$$
Integrating both sides with respect to the measure $\frac{d\boldsymbol a}{|\boldsymbol a|^2_{p}},$ we obtain
$$\int_{\mathbb{R}_{0}^n}\int_E|(\mathfrak{F}_{\alpha,\boldsymbol b}\{(W_{\psi,\alpha}f)(\boldsymbol b,\boldsymbol a)\})(\boldsymbol\xi)|^2\frac{d\boldsymbol a}{|\boldsymbol a|^2_{p}}d\boldsymbol\xi\leq \frac{A_{\theta}}{|\sin(\alpha-\beta)|^{2\theta}}(\lambda(E))^\frac{2\theta}{n}\int_{\mathbb{R}_{0}^n}\|\|\boldsymbol x\|^{\theta}(\mathfrak{F}_{\beta,\boldsymbol b}\{(W_{\psi,\alpha}f)(\boldsymbol b,\boldsymbol a)\})\|^2_{L^2(\mathbb{R}^n)}\frac{d\boldsymbol a}{|\boldsymbol a|^2_{p}}.$$
Using lemma \ref{P1lemma4.2}, we get
$$\frac{C_{\psi,\alpha}}{|C_{\alpha}|^2}\int_E|(\mathfrak{F}_{\alpha}f)(\boldsymbol\xi)|^2d\boldsymbol\xi\leq\frac{A_{\theta}}{|\sin(\alpha-\beta)|^{2\theta}}(\lambda(E))^\frac{2\theta}{n}\int_{\mathbb{R}_{0}^n}\int_{\mathbb{R}^n}|\|\boldsymbol x\|^{\theta}(\mathfrak{F}_{\beta,\boldsymbol b}\{(W_{\psi,\alpha}f)(\boldsymbol b,\boldsymbol a)\})(\boldsymbol x)|^2d\boldsymbol x\frac{d\boldsymbol a}{|\boldsymbol a|^2_{p}},$$
which implies 
$$\int_E|(\mathfrak{F}_{\alpha}f)(\boldsymbol\xi)|^2d\boldsymbol\xi\leq\frac{A_{\theta}{|C_{\alpha}|^2}}{|\sin(\alpha-\beta)|^{2\theta}C_{\psi,\alpha}}(\lambda(E))^\frac{2\theta}{n}\int_{\mathbb{R}_{0}^n}\int_{\mathbb{R}^n}\|\boldsymbol x\|^{2\theta}|(\mathfrak{F}_{\beta,\boldsymbol b}\{(W_{\psi,\alpha}f)(\boldsymbol b,\boldsymbol a)\})(\boldsymbol x)|^2d\boldsymbol x\frac{d\boldsymbol a}{|\boldsymbol a|^2_{p}}.$$
Replacing $f$ by $(W_{\psi,\alpha}f)(\boldsymbol b,\boldsymbol a)$ in equation (\ref{P1eqn31}), we get
$$\int_E|(\mathfrak{F}_{\alpha,\boldsymbol b}\{(W_{\psi,\alpha})(\boldsymbol b,\boldsymbol a)\})(\boldsymbol\xi)|^2d\boldsymbol\xi\leq \frac{A_{\theta}}{|\sin(\alpha-\beta)|^{n}}\lambda(E)\|(W_{\psi,\alpha}f)(\cdot,\boldsymbol a)\|^{2-\frac{n}{\theta}}_{L^2(\mathbb{R}^n)}\|\|\boldsymbol x\|^{\theta}(\mathfrak{F}_{\beta,\boldsymbol b}\{(W_{\psi,\alpha}f)(\boldsymbol b,\boldsymbol a)\})\|^{\frac{n}{\theta}}_{L^2(\mathbb{R}^n)}.$$
Integrating both sides with respect to the measure $\frac{d\boldsymbol a}{|\boldsymbol a|^2_{p}}$ and applying lemma \ref{P1lemma4.2}, yields
$$\int_E|(\mathfrak{F}_{\alpha}f)(\boldsymbol\xi)|^2d\boldsymbol\xi\leq\frac{A_{\theta}|C_{\alpha}|^2}{|\sin(\alpha-\beta)|^{n}C_{\psi,\alpha}}\lambda(E)\int_{\mathbb{R}_{0}^n}\|(W_{\psi,\alpha}f)(\cdot,\boldsymbol a)\|^{2-\frac{n}{\theta}}_{L^2(\mathbb{R}^n)}\|\|\boldsymbol x\|^{\theta}(\mathfrak{F}_{\beta,\boldsymbol b}\{(W_{\psi,\alpha}f)(\boldsymbol b,\boldsymbol a)\})\|^{\frac{n}{\theta}}_{L^2(\mathbb{R}^n)}\frac{d\boldsymbol a}{|\boldsymbol a|^2_{p}}.$$
This completes the proof.
\end{proof} 
The corollary below is a Local uncertainty inequality for classical wavelet transform in $n$-dimensions.
\begin{corollary}
Let $\psi$ be a wavelet and $\theta$ be a given positive number. Then there exists a constant $A_{\theta}$ such that for all measurable subset $E$ of $\mathbb{R}^n$ and all $f\in L^2(\mathbb{R}^n),$
\begin{equation*}
\begin{array}{l}
\hspace{-1.5cm}\displaystyle\int_E|(\mathfrak{F}f)(\boldsymbol\xi)|^2d\boldsymbol\xi\\
\leq\frac{A_{\theta}}{(2\pi)^nC_{\psi}}(\lambda(E))^\frac{2\theta}{n}\displaystyle\int_{\mathbb{R}_{0}^n}\int_{\mathbb{R}^n}\|\boldsymbol x\|^{2\theta}|(W_{\psi}f)(\boldsymbol x,\boldsymbol a)|^2d\boldsymbol x\frac{d\boldsymbol a}{|\boldsymbol a|^2_{p}},\ \mbox{if}\ 0<\theta<\frac{n}{2}
\end{array}
\end{equation*} 
 and
\begin{equation*}
\begin{array}{l}
 \hspace{-1cm}\displaystyle\int_E|(\mathfrak{F}f)(\boldsymbol\xi)|^2d\boldsymbol\xi\\[6pt] \leq\frac{A_{\theta}}{(2\pi)^nC_{\psi}}\lambda(E)\displaystyle\int_{\mathbb{R}_{0}^n}\|(W_{\psi}f)(\cdot,\boldsymbol a)\|^{2-\frac{n}{\theta}}_{L^2(\mathbb{R}^n)}\|\|\boldsymbol x\|^{\theta}\{(W_{\psi}f)(\boldsymbol x,\boldsymbol a)\}\|^{\frac{n}{\theta}}_{L^2(\mathbb{R}^n)}\frac{d\boldsymbol a}{|\boldsymbol a|^2_{p}},\ \mbox{if}\ \theta>\frac{n}{2}.
 \end{array}
\end{equation*} 
\end{corollary}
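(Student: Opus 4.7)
The natural path would be to specialize Theorem~\ref{P1theo4.1} by setting $\alpha=\beta=\pi/2$, which recovers $\mathfrak{F}_{\pi/2}=\mathfrak{F}$ and $W_{\psi,\pi/2}=W_{\psi}$. However, the constant in Theorem~\ref{P1theo4.1} contains $|\sin(\alpha-\beta)|$ in the denominator, which vanishes in this limit. The remedy is to mirror the proof of Theorem~\ref{P1theo4.1} in the classical setting, substituting Folland's classical local uncertainty inequality directly in place of Lemma~\ref{P1lemma4.1}, whose $\csc(\alpha-\beta)$ factor was the source of the singular constant.

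Concretely, for each fixed $\boldsymbol a\in\mathbb{R}_{0}^n$, the section $g_{\boldsymbol a}(\cdot):=(W_{\psi}f)(\cdot,\boldsymbol a)$ lies in $L^2(\mathbb{R}^n)$ (for a.e.\ $\boldsymbol a$, by Fubini applied to the $\alpha=\pi/2$ instance of Theorem~\ref{P1theo2.1}). The plan is to apply the classical Folland inequalities to $g_{\boldsymbol a}$: for $0<\theta<n/2$,
$$\int_E\left|(\mathfrak{F}g_{\boldsymbol a})(\boldsymbol\xi)\right|^2 d\boldsymbol\xi\leq A_{\theta}(\lambda(E))^{2\theta/n}\int_{\mathbb{R}^n}\|\boldsymbol x\|^{2\theta}|g_{\boldsymbol a}(\boldsymbol x)|^2 d\boldsymbol x,$$
and for $\theta>n/2$,
$$\int_E\left|(\mathfrak{F}g_{\boldsymbol a})(\boldsymbol\xi)\right|^2 d\boldsymbol\xi\leq A_{\theta}\lambda(E)\,\|g_{\boldsymbol a}\|^{2-n/\theta}_{L^2(\mathbb{R}^n)}\,\big\|\|\boldsymbol x\|^{\theta}g_{\boldsymbol a}\big\|^{n/\theta}_{L^2(\mathbb{R}^n)}.$$
I would then integrate each inequality in $\boldsymbol a$ against the measure $d\boldsymbol a/|\boldsymbol a|_{p}^2$.

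To reduce the left-hand side to $\int_E|(\mathfrak{F}f)(\boldsymbol\xi)|^2 d\boldsymbol\xi$, I would invoke Lemma~\ref{P1lemma4.2} at $\alpha=\pi/2$: since $\cot(\pi/2)=0$, the auxiliary exponential in the definition of $C_{\psi,\alpha}$ collapses, $\mathfrak{F}_{\pi/2,\boldsymbol b}=\mathfrak{F}_{\boldsymbol b}$, and $|C_{\pi/2}|^2=(2\pi)^{-n}$, so the lemma reads
$$\int_{\mathbb{R}_{0}^n}\int_{E}\left|(\mathfrak{F}g_{\boldsymbol a})(\boldsymbol\xi)\right|^2 d\boldsymbol\xi\,\frac{d\boldsymbol a}{|\boldsymbol a|_{p}^2}=(2\pi)^n C_{\psi}\int_E\left|(\mathfrak{F}f)(\boldsymbol\xi)\right|^2 d\boldsymbol\xi.$$
Substituting this on the left and dividing through by $(2\pi)^n C_{\psi}$ delivers both stated inequalities.

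The main obstacle is conceptual rather than technical: Theorem~\ref{P1theo4.1} does not admit a direct $\alpha=\beta=\pi/2$ specialization because of the singular $|\sin(\alpha-\beta)|$ factor, so one must rerun the same three-step scheme (pointwise inequality, integrate in scale, apply Lemma~\ref{P1lemma4.2}) with Folland's classical inequality replacing Lemma~\ref{P1lemma4.1}. Beyond verifying that the $\alpha=\pi/2$ evaluation of $C_{\psi,\alpha}$ matches the $C_{\psi}$ used in the corollary, no further analytical difficulty arises.
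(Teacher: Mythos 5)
Your argument is correct, but the ``conceptual obstacle'' you identify is not actually there, and the paper's intended derivation is the direct specialization you rejected. The corollary is not the $\alpha=\beta=\pi/2$ case of Theorem~\ref{P1theo4.1}: its right-hand side contains $(W_{\psi}f)(\boldsymbol x,\boldsymbol a)$ itself, with no transform applied in the translation variable, so the correct choice is $\alpha=\pi/2$ and $\beta=0$. Since $K_{0}(\boldsymbol t,\boldsymbol\xi)=\delta(\boldsymbol t-\boldsymbol\xi)$, the operator $\mathfrak{F}_{0,\boldsymbol b}$ is the identity, $|\sin(\alpha-\beta)|=|\sin(\pi/2)|=1$ (no singularity), $|C_{\pi/2}|^2=(2\pi)^{-n}$, and $C_{\psi,\pi/2}=C_{\psi}$ because $\cot(\pi/2)=0$; substituting these into Theorem~\ref{P1theo4.1} gives both inequalities verbatim. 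Your detour --- applying Folland's classical local inequality to $g_{\boldsymbol a}=(W_{\psi}f)(\cdot,\boldsymbol a)$ for each scale, integrating against $d\boldsymbol a/|\boldsymbol a|_{p}^2$, and converting the left side via Lemma~\ref{P1lemma4.2} at $\alpha=\pi/2$ --- is a valid rerun of the same three-step scheme and lands on the identical constants, so it does prove the corollary. It even has one small merit: the paper's Lemma~\ref{P1lemma4.1} (hence (\ref{P1eqn28}) and (\ref{P1eqn31})) was derived under the standing assumption that $\alpha$, $\gamma$ and $\alpha-\gamma=\beta$ are not integer multiples of $\pi$, so the choice $\beta=0$ is formally a degenerate limiting case; your version bypasses that by invoking Folland's inequality directly, where it reduces to a triviality. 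But the singular factor $|\sin(\alpha-\beta)|$ you were worried about never enters, because one should not force $\beta=\pi/2$.
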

\section{CFrWT on Morrey Space}
In this section we take our wavelets to be integrable and study various properties of CFrWT on Morrey space. The main result of this section will be the boundedness of the CFrWT and the estimate of $L_{M}^{p,\nu}(\mathbb{R}^n)$-distance of the CFrWT  of two argument functions with respect to two wavelets. We shall begin with the definition of the Morrey spaces.
\begin{defn}
Let $1\leq p<\infty$ and $\nu\geq 0.$ The Morrey space, denoted by $L^{p,\nu}_{M}(\mathbb{R}^n),$ is defined as
$$ L^{p,\nu}_{M}(\mathbb{R}^n)=\left\{f\in L^p(\mathbb{R}^n): \sup_{\boldsymbol x\in\mathbb{R}^n,r>0}\frac{1}{r^{\nu}}\int_{B(\boldsymbol x,r)}|f(\boldsymbol t)|^pd\boldsymbol t<\infty\right\},$$ which is a Banach space with respect to the norm
\begin{equation}\label{P1eqn32}
\|f\|_{L^{p,\nu}_{M}(\mathbb{R}^n)}=\sup_{\boldsymbol x\in\mathbb{R}^n,r>0}\left(\frac{1}{r^{\nu}}\int_{B(\boldsymbol x,r)}|f(\boldsymbol t)|^p\right)^{\frac{1}{p}}.
\end{equation}
\end{defn}
\begin{lemma}\label{P1lemma4.3}
For a wavelet $\psi$ and a function $f\in L^{1,\nu}_{M}(\mathbb{R}^n)$ the CFrWT of $f,$ i.e., $(W_{\psi,\alpha}f)(\cdot,\boldsymbol a)$ is in $L^1(\mathbb{R}^n).$
\end{lemma}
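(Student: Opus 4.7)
The plan is to apply Tonelli's theorem to the modulus of the transform, reducing the claim to the integrability of $\psi$ (assumed at the opening of this section) together with the embedding $L^{1,\nu}_{M}(\mathbb{R}^n)\subset L^1(\mathbb{R}^n)$ which is built into the definition of the Morrey class given just above.

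First I would pass to absolute values. From the formula (\ref{P1eqnC}) for $\psi_{\boldsymbol a,\boldsymbol b,\alpha}$ the chirp factor $e^{-\frac{i}{2}(\|\boldsymbol t\|^2-\|\boldsymbol b\|^2)\cot\alpha}$ has unit modulus, so
$$|(W_{\psi,\alpha}f)(\boldsymbol b,\boldsymbol a)|\le\frac{1}{\sqrt{|\boldsymbol a|_{p}}}\int_{\mathbb{R}^n}|f(\boldsymbol t)|\,\Bigl|\psi\Bigl(\frac{\boldsymbol t-\boldsymbol b}{\boldsymbol a}\Bigr)\Bigr|\,d\boldsymbol t.$$

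Next I would integrate this inequality in $\boldsymbol b$ over $\mathbb{R}^n$ and interchange the two integrations by Tonelli's theorem; the integrand is non-negative, so no a priori integrability check is required. For each fixed $\boldsymbol t$, the coordinatewise substitution $\boldsymbol u=(\boldsymbol t-\boldsymbol b)/\boldsymbol a$ is a diffeomorphism of $\mathbb{R}^n$ (legitimate since $\boldsymbol a\in\mathbb{R}_0^n$) with Jacobian $|\boldsymbol a|_p=|a_1a_2\cdots a_n|$, so the inner integral collapses to
$$\int_{\mathbb{R}^n}\Bigl|\psi\Bigl(\frac{\boldsymbol t-\boldsymbol b}{\boldsymbol a}\Bigr)\Bigr|\,d\boldsymbol b=|\boldsymbol a|_p\,\|\psi\|_{L^1(\mathbb{R}^n)},$$
a constant independent of $\boldsymbol t$. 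Substituting back produces the quantitative bound
$$\int_{\mathbb{R}^n}|(W_{\psi,\alpha}f)(\boldsymbol b,\boldsymbol a)|\,d\boldsymbol b\le\sqrt{|\boldsymbol a|_p}\,\|\psi\|_{L^1(\mathbb{R}^n)}\,\|f\|_{L^1(\mathbb{R}^n)}.$$

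Because $\psi\in L^1(\mathbb{R}^n)$ by the standing assumption of this section, and the Morrey class, as defined just above, is contained in $L^1(\mathbb{R}^n)$, the right-hand side is finite. This yields $(W_{\psi,\alpha}f)(\cdot,\boldsymbol a)\in L^1(\mathbb{R}^n)$, as required. I do not foresee a serious obstacle: the only point of care is verifying that the product-structure Jacobian really equals $|\boldsymbol a|_p$, which is immediate from the coordinatewise formula $u_i=(t_i-b_i)/a_i$ and the definition $|\boldsymbol a|_p=|a_1a_2\cdots a_n|$. The same computation in fact delivers a quantitative norm inequality that presumably feeds into the subsequent boundedness results of this section.
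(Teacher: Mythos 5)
Your argument is correct and is essentially the paper's own proof: both bound the transform pointwise by dropping the unimodular chirp, apply Tonelli, and use a coordinatewise change of variables with Jacobian $|\boldsymbol a|_p$ to arrive at the same bound $\sqrt{|\boldsymbol a|_p}\,\|\psi\|_{L^1(\mathbb{R}^n)}\|f\|_{L^1(\mathbb{R}^n)}$, finite because the Morrey class is by definition contained in $L^1(\mathbb{R}^n)$. The only cosmetic difference is that the paper substitutes in the $\boldsymbol t$-integral before integrating over $\boldsymbol b$, whereas you substitute in the $\boldsymbol b$-integral; the computation is the same.
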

\begin{proof}
By the definition of CFrWT, we have
$$(W_{\psi,\alpha}f)(\boldsymbol b,\boldsymbol a)=\int_{\mathbb{R}^n} f(\boldsymbol t)\overline{\psi_{\boldsymbol a,\boldsymbol b,\alpha}(\boldsymbol t)}d\boldsymbol t.$$
Now,
\begin{eqnarray*}
|(W_{\psi,\alpha}f)(\boldsymbol b,\boldsymbol a)|\leq \int_{\mathbb{R}^n}|f(\boldsymbol t)||\psi_{\boldsymbol a,\boldsymbol b,\alpha}(\boldsymbol t)|d\boldsymbol t.
\end{eqnarray*}
Using equation (\ref{P1eqnC}), we get
\begin{align}\label{P1eqn33}
|(W_{\psi,\alpha}f)(\boldsymbol b,\boldsymbol a)|&\leq  \frac{1}{\sqrt{|\boldsymbol a|_{p}}}\int_{\mathbb{R}^n}|f(\boldsymbol t)||\psi\left(\frac{\boldsymbol t-\boldsymbol b}{\boldsymbol a}\right)|d\boldsymbol t\notag\\
&= \sqrt{|\boldsymbol a|_{p}}\int_{\mathbb{R}^n}|f(\boldsymbol a\boldsymbol x+\boldsymbol b)||\psi(\boldsymbol x)|d\boldsymbol x.
\end{align}
This implies,
\begin{align*}
\int_{\mathbb{R}^n}|(W_{\psi,\alpha}f)(\boldsymbol b,\boldsymbol a)|d\boldsymbol b&\leq \sqrt{|\boldsymbol a|_{p}}\int_{\mathbb{R}^n}\int_{\mathbb{R}^n}|f(\boldsymbol a\boldsymbol x+\boldsymbol b)||\psi(\boldsymbol x)|d\boldsymbol x d\boldsymbol b\\
&=\sqrt{|\boldsymbol a|_{p}}\int_{\mathbb{R}^n}|\psi(\boldsymbol x)|\int_{\mathbb{R}^n}|f(\boldsymbol a\boldsymbol x+\boldsymbol b)| d\boldsymbol b d\boldsymbol x\\
&= \sqrt{|\boldsymbol a|_{p}}\|\psi\|_{L^1(\mathbb{R}^n)}\|f\|_{L^1(\mathbb{R}^n)}.
\end{align*}
Thus, it follow that $(W_{\psi,\alpha}f)(\cdot,\boldsymbol a)$ is in $L^1(\mathbb{R}^n).$
\end{proof}
\begin{theorem}\label{P1theo4.2}
For any $\boldsymbol a\in\mathbb{R}^n_{0},$ the operator $W_{\psi,\alpha}:L^{1,\nu}_{M}(\mathbb{R}^n)\rightarrow L^{1,\nu}_{M}(\mathbb{R}^n)$ defined by $f\rightarrow (W_{\psi,\alpha}f)(\cdot,\boldsymbol a)$ is bounded. Furthermore,
$$\|(W_{\psi,\alpha}f)(\cdot,\boldsymbol a)\|_{L^{1,\nu}_{M}(\mathbb{R}^n)}\leq\sqrt{|\boldsymbol a|_{p}}\|\psi\|_{L^1(\mathbb{R}^n)}\|f\|_{L^{1,\nu}_{M}(\mathbb{R}^n)}.$$
\end{theorem}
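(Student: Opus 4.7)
The plan is to combine the pointwise estimate from Lemma \ref{P1lemma4.3} with the defining supremum in the Morrey norm, then reduce matters to a translation which the Morrey norm handles trivially.

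First I will fix an arbitrary ball $B(\boldsymbol y, r) \subset \mathbb{R}^n$ and estimate the quantity
$$\frac{1}{r^{\nu}}\int_{B(\boldsymbol y, r)}|(W_{\psi,\alpha}f)(\boldsymbol b, \boldsymbol a)|\, d\boldsymbol b,$$
so that taking the supremum over $\boldsymbol y$ and $r$ at the end will yield the Morrey norm of $(W_{\psi,\alpha}f)(\cdot,\boldsymbol a)$. For the pointwise bound on the integrand, I will reuse the inequality \eqref{P1eqn33} already derived inside the proof of Lemma \ref{P1lemma4.3}, namely
$$|(W_{\psi,\alpha}f)(\boldsymbol b,\boldsymbol a)|\leq \sqrt{|\boldsymbol a|_{p}}\int_{\mathbb{R}^n}|f(\boldsymbol a\boldsymbol x+\boldsymbol b)||\psi(\boldsymbol x)|\, d\boldsymbol x.$$

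Next I will insert this bound into the ball integral and apply Fubini's theorem (justified because $f\in L^1(\mathbb{R}^n)$ implies the joint integrand is in $L^1(B(\boldsymbol y,r)\times\mathbb{R}^n)$ after translation) to obtain
$$\frac{1}{r^{\nu}}\int_{B(\boldsymbol y, r)}|(W_{\psi,\alpha}f)(\boldsymbol b, \boldsymbol a)|\, d\boldsymbol b \;\leq\; \sqrt{|\boldsymbol a|_{p}}\int_{\mathbb{R}^n}|\psi(\boldsymbol x)|\left(\frac{1}{r^{\nu}}\int_{B(\boldsymbol y, r)}|f(\boldsymbol a\boldsymbol x+\boldsymbol b)|\, d\boldsymbol b\right)d\boldsymbol x.$$
Then I will perform the translation $\boldsymbol u = \boldsymbol a\boldsymbol x + \boldsymbol b$ in the inner integral, which is measure-preserving and sends $B(\boldsymbol y, r)$ onto $B(\boldsymbol a\boldsymbol x + \boldsymbol y, r)$. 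This turns the inner bracketed quantity into
$$\frac{1}{r^{\nu}}\int_{B(\boldsymbol a\boldsymbol x + \boldsymbol y, r)}|f(\boldsymbol u)|\, d\boldsymbol u,$$
which is immediately majorised by $\|f\|_{L^{1,\nu}_M(\mathbb{R}^n)}$ from the definition \eqref{P1eqn32}. The key point here is that this bound is uniform in $\boldsymbol x$, so it factors out of the outer integral in $\boldsymbol x$, leaving $\int_{\mathbb{R}^n}|\psi(\boldsymbol x)|\, d\boldsymbol x = \|\psi\|_{L^1(\mathbb{R}^n)}$.

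Putting the pieces together gives, for every $\boldsymbol y$ and $r$,
$$\frac{1}{r^{\nu}}\int_{B(\boldsymbol y, r)}|(W_{\psi,\alpha}f)(\boldsymbol b, \boldsymbol a)|\, d\boldsymbol b \;\leq\; \sqrt{|\boldsymbol a|_{p}}\,\|\psi\|_{L^1(\mathbb{R}^n)}\,\|f\|_{L^{1,\nu}_M(\mathbb{R}^n)},$$
and taking the supremum on the left yields the claimed norm estimate. There is no serious obstacle: the only place that requires care is checking that $\boldsymbol a\neq 0$ in each coordinate makes the substitution legitimate (which is exactly the hypothesis $\boldsymbol a\in \mathbb{R}^n_{0}$) and that Fubini applies, which follows from the integrability already established in Lemma \ref{P1lemma4.3}. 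The argument does not need the wavelet admissibility condition — only $\psi\in L^1(\mathbb{R}^n)$, consistent with the standing assumption at the start of Section 5.
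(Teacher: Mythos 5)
Your proposal is correct and follows essentially the same route as the paper's own proof: both start from the pointwise estimate \eqref{P1eqn33}, integrate over a ball, apply Fubini, translate the inner integral onto $B(\boldsymbol a\boldsymbol x+\boldsymbol y,r)$, bound it uniformly by $\|f\|_{L^{1,\nu}_{M}(\mathbb{R}^n)}$, and take the supremum. The only minor slip is attributing the legitimacy of the final substitution to $\boldsymbol a\in\mathbb{R}^n_{0}$ — that step is a mere translation in $\boldsymbol b$ and needs no such hypothesis (the condition is used earlier, in the dilation producing \eqref{P1eqn33}) — but this does not affect the validity of the argument.
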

\begin{proof}
By the definition of $L^{1,\nu}_{M}(\mathbb{R}^n)$-norm, we have
\begin{equation}\label{P1eqn34}
\|(W_{\psi,\alpha}f)(\cdot,\boldsymbol a)\|_{L^{1,\nu}_{M}(\mathbb{R}^n)}=\sup_{\boldsymbol x\in\mathbb{R}^n,r>0}\left(\frac{1}{r^{\nu}}\int_{B(\boldsymbol x,r)}|(W_{\psi,\alpha}f)(\boldsymbol b,\boldsymbol a)|d\boldsymbol b\right).
\end{equation}
Now using equation (\ref{P1eqn33}), we get
\begin{eqnarray}\label{P1eqn35}
\frac{1}{r^{\nu}}\int_{B(\boldsymbol x,r)}|(W_{\psi,\alpha}f)(\boldsymbol b,\boldsymbol a)|d\boldsymbol b&\leq &\frac{\sqrt{|\boldsymbol a|_{p}}}{r^{\nu}}\int_{B(\boldsymbol x,r)}\left(\int_{\mathbb{R}^n}|f(\boldsymbol a\boldsymbol u+\boldsymbol b)||\psi(\boldsymbol u)|d\boldsymbol u\right)d\boldsymbol b\notag\\
&= &\frac{\sqrt{|\boldsymbol a|_{p}}}{r^{\nu}}\int_{\mathbb{R}^n}|\psi(\boldsymbol u)|\left(\int_{B(\boldsymbol x,r)}|f(\boldsymbol a\boldsymbol u+\boldsymbol b)|d\boldsymbol b\right)d\boldsymbol u\notag\\
&=&\sqrt{|\boldsymbol a|_{p}}\int_{\mathbb{R}^n}|\psi(\boldsymbol u)|\left(\frac{1}{r^{\nu}}\int_{B(\boldsymbol a\boldsymbol u+\boldsymbol x,r)}|f(\boldsymbol z)|d\boldsymbol z\right)d\boldsymbol u
\end{eqnarray}
Since 
\begin{eqnarray}\label{P1eqn36}
\frac{1}{r^{\nu}}\int_{B(\boldsymbol a\boldsymbol u+\boldsymbol x,r)}|f(\boldsymbol z)|d\boldsymbol z &\leq& \sup_{{\boldsymbol y\in\mathbb{R}^n,r>0}}\left(\frac{1}{r^{\nu}}\int_{B(\boldsymbol y,r)}|f(\boldsymbol z)|d\boldsymbol z \right)\notag\\
&=&\|f\|_{L^{1,\nu}_{M}(\mathbb{R}^n).}
\end{eqnarray}
From equation (\ref{P1eqn35}) and equation (\ref{P1eqn36}), we have
$$\frac{1}{r^{\nu}}\int_{B(\boldsymbol x,r)}|(W_{\psi,\alpha}f)(\boldsymbol b,\boldsymbol a)|d\boldsymbol b\leq\sqrt{|\boldsymbol a|_{p}}\|f\|_{L^{1,\nu}_{M}(\mathbb{R}^n)}\|\psi\|_{L^1(\mathbb{R}^n)},$$
which gives
\begin{equation}
\label{P1eqn37}
\sup_{\boldsymbol x\in\mathbb{R}^n,r>0}\left(\frac{1}{r^{\nu}}\int_{B(\boldsymbol x,r)}|(W_{\psi,\alpha}f)(\boldsymbol b,\boldsymbol a)|d\boldsymbol b\right)\leq\sqrt{|\boldsymbol a|_{p}}\|f\|_{L^{1,\nu}_{M}(\mathbb{R}^n)}\|\psi\|_{L^1(\mathbb{R}^n)}.
\end{equation}
By virtue of equation (\ref{P1eqn34}), equation (\ref{P1eqn37}) and lemma \ref{P1lemma4.3}, it follows that $(W_{\phi,\alpha}f)(\cdot,\boldsymbol a)\in L^{1,\nu}_{M}(\mathbb{R}^n)$ and 
$$\|(W_{\psi,\alpha}f)(\cdot,\boldsymbol a)\|_{L^{1,\nu}_{M}(\mathbb{R}^n)}\leq\sqrt{|\boldsymbol a|_{p}}\|\psi\|_{L^1(\mathbb{R}^n)}\|f\|_{L^{1,\nu}_{M}(\mathbb{R}^n)}.$$
This proves the theorem.
\end{proof}
\begin{corollary}\label{P1corollary4.1}
Let $\boldsymbol a\in\mathbb{R}^n_{0}$, $\psi$ be a wavelet and $f\in L^{1,\nu}_{M}(\mathbb{R}^n),$ then $$\|(W_{\psi,\alpha}f)(\cdot,\boldsymbol a)\|_{L^{1,\nu}_{M}(\mathbb{R}^n)}=O(\sqrt{|\boldsymbol a|_{p}}).$$ 
\end{corollary}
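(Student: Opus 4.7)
The statement is almost immediate from the preceding Theorem \ref{P1theo4.2}, so my proof plan is short. The plan is to invoke Theorem \ref{P1theo4.2} directly and then read off the order of growth in $\sqrt{|\boldsymbol a|_p}$ by isolating the $\boldsymbol a$-dependence on the right-hand side.

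First I would recall that because we are working in section 5, the wavelet $\psi$ is assumed integrable, so $\|\psi\|_{L^1(\mathbb{R}^n)}$ is a finite nonnegative constant independent of $\boldsymbol a$. Likewise, the hypothesis $f\in L^{1,\nu}_{M}(\mathbb{R}^n)$ guarantees that $\|f\|_{L^{1,\nu}_{M}(\mathbb{R}^n)}$ is a finite constant not depending on $\boldsymbol a$. Applying Theorem \ref{P1theo4.2} to the given $\boldsymbol a\in\mathbb{R}^n_0$, we obtain
\[
\|(W_{\psi,\alpha}f)(\cdot,\boldsymbol a)\|_{L^{1,\nu}_{M}(\mathbb{R}^n)}\leq\sqrt{|\boldsymbol a|_{p}}\,\|\psi\|_{L^1(\mathbb{R}^n)}\|f\|_{L^{1,\nu}_{M}(\mathbb{R}^n)}.
\]

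Setting $C=\|\psi\|_{L^1(\mathbb{R}^n)}\|f\|_{L^{1,\nu}_{M}(\mathbb{R}^n)}$, which is a finite constant independent of $\boldsymbol a$, the inequality becomes $\|(W_{\psi,\alpha}f)(\cdot,\boldsymbol a)\|_{L^{1,\nu}_{M}(\mathbb{R}^n)}\leq C\sqrt{|\boldsymbol a|_p}$, which by the definition of the Landau symbol is exactly $O(\sqrt{|\boldsymbol a|_p})$. No additional ingredients are required and there is no real obstacle; the only thing to check is that the constants multiplying $\sqrt{|\boldsymbol a|_p}$ in Theorem \ref{P1theo4.2} are finite and $\boldsymbol a$-independent, which has just been verified. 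Thus the corollary follows.
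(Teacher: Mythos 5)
Your proof is correct and matches the paper's intent exactly: the corollary is an immediate consequence of Theorem \ref{P1theo4.2}, with $\|\psi\|_{L^1(\mathbb{R}^n)}\|f\|_{L^{1,\nu}_{M}(\mathbb{R}^n)}$ serving as the finite, $\boldsymbol a$-independent constant in the Landau estimate. The paper leaves this step implicit, and your write-up supplies precisely the verification one would expect.
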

\begin{theorem}\label{P1theo4.3}
Let $\phi,\psi$ be two wavelets and $f\in L^{1,\nu}_{M}(\mathbb{R}^n).$ Then
$$\|(W_{\phi,\alpha}f)(\cdot,\boldsymbol a)-(W_{\psi,\alpha}f)(\cdot,\boldsymbol a)\|_{L^{1,\nu}_{M}(\mathbb{R}^n)}\leq \sqrt{|\boldsymbol a|_{p}}\|f\|_{L^{1,\nu}_{M}(\mathbb{R}^n)}\|\phi-\psi\|_{L^1(\mathbb{R}^n)}.$$ 
\end{theorem}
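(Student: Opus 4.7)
The plan is to exploit the linearity of the CFrWT in the wavelet argument and then reduce the claim to a direct application of the bound already proved in Theorem \ref{P1theo4.2}.

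First I would observe that the mapping $\psi \mapsto \psi_{\textbf{\em a},\boldsymbol b,\alpha}$ is linear (see equation (\ref{P1eqnC})), so from the definition of the CFrWT we get, for every $(\boldsymbol b,\boldsymbol a)\in \mathbb{R}^n\times \mathbb{R}_0^n$,
\begin{equation*}
(W_{\phi,\alpha}f)(\boldsymbol b,\boldsymbol a)-(W_{\psi,\alpha}f)(\boldsymbol b,\boldsymbol a)
=\int_{\mathbb{R}^n} f(\boldsymbol t)\,\overline{(\phi-\psi)_{\textbf{\em a},\boldsymbol b,\alpha}(\boldsymbol t)}\,d\boldsymbol t,
\end{equation*}
that is, the difference equals $(W_{\phi-\psi,\alpha}f)(\boldsymbol b,\boldsymbol a)$ with the pointwise bound coming from equation (\ref{P1eqn33}) applied to the function $\phi-\psi$ in place of $\psi$:
\begin{equation*}
|(W_{\phi,\alpha}f)(\boldsymbol b,\boldsymbol a)-(W_{\psi,\alpha}f)(\boldsymbol b,\boldsymbol a)|
\leq \sqrt{|\boldsymbol a|_p}\int_{\mathbb{R}^n}|f(\boldsymbol a\boldsymbol x+\boldsymbol b)||\phi(\boldsymbol x)-\psi(\boldsymbol x)|\,d\boldsymbol x.
\end{equation*}

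Next, I would integrate both sides over a ball $B(\boldsymbol x_0,r)\subset \mathbb{R}^n$, divide by $r^\nu$, and swap the order of integration by Fubini. After the translation $\boldsymbol z=\boldsymbol a\boldsymbol x+\boldsymbol b$ in the inner integral, the inner average becomes
\begin{equation*}
\frac{1}{r^\nu}\int_{B(\boldsymbol a\boldsymbol x+\boldsymbol x_0,r)}|f(\boldsymbol z)|\,d\boldsymbol z \leq \|f\|_{L^{1,\nu}_M(\mathbb{R}^n)},
\end{equation*}
uniformly in $\boldsymbol x$, just as in the proof of Theorem \ref{P1theo4.2}. Pulling this supremum outside then yields
\begin{equation*}
\frac{1}{r^\nu}\int_{B(\boldsymbol x_0,r)}\bigl|(W_{\phi,\alpha}f)(\boldsymbol b,\boldsymbol a)-(W_{\psi,\alpha}f)(\boldsymbol b,\boldsymbol a)\bigr|\,d\boldsymbol b
\leq \sqrt{|\boldsymbol a|_p}\,\|f\|_{L^{1,\nu}_M(\mathbb{R}^n)}\,\|\phi-\psi\|_{L^1(\mathbb{R}^n)}.
\end{equation*}
Taking the supremum over $\boldsymbol x_0\in\mathbb{R}^n$ and $r>0$ then gives the asserted Morrey-norm estimate.

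The only real subtlety is that Theorem \ref{P1theo4.2} is stated for a wavelet $\psi$, whereas $\phi-\psi$ need not be admissible. However, an inspection of the proof of Theorem \ref{P1theo4.2} shows that the admissibility condition is never used; only $\psi\in L^1(\mathbb{R}^n)$ is needed, and $\phi-\psi\in L^1(\mathbb{R}^n)$ since both $\phi$ and $\psi$ are. So rather than invoking Theorem \ref{P1theo4.2} as a black box, I would either reproduce its two-line Fubini argument applied to $\phi-\psi$, or explicitly note the admissibility hypothesis can be dropped and quote the theorem. This is the only point that requires a little care; the rest is just linearity and the triangle inequality.
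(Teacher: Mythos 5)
Your proposal is correct and is essentially the paper's own argument: the paper likewise uses that the unimodular chirp factor is common to $\phi_{\boldsymbol a,\boldsymbol y,\alpha}$ and $\psi_{\boldsymbol a,\boldsymbol y,\alpha}$, so the difference of the transforms is controlled pointwise by $\sqrt{|\boldsymbol a|_p}\int|f(\boldsymbol a\boldsymbol u+\boldsymbol y)||\phi(\boldsymbol u)-\psi(\boldsymbol u)|\,d\boldsymbol u$, and then repeats the Fubini--translation--Morrey-norm computation of Theorem \ref{P1theo4.2} with $\phi-\psi$ in place of $\psi$. Your remark that admissibility of $\phi-\psi$ is never needed (only $L^1$ membership) is accurate and correctly resolves the one subtlety.
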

\begin{proof}
We have, 
\begin{equation}{\label{P1eqn38}}
\|(W_{\phi,\alpha}f)(\cdot,\boldsymbol a)-(W_{\psi,\alpha}f)(\cdot,\boldsymbol a)\|_{L^{1,\nu}_{M}(\mathbb{R}^n)}=\sup_{\boldsymbol x\in\mathbb{R}^n,r>0}\frac{1}{r^{\nu}}\int_{B(\boldsymbol x,r)}\left|\int_{\mathbb{R}^n}\big(f(\boldsymbol t)\overline{\phi_{\boldsymbol a,\boldsymbol y,\alpha}(\boldsymbol t)}-f(\boldsymbol t)\overline{\psi_{\boldsymbol a,\boldsymbol y,\alpha}(\boldsymbol t)}\big)d\boldsymbol t\right| d\boldsymbol y.
\end{equation}
Now,
\begin{eqnarray*}
\frac{1}{r^{\nu}}\int_{B(\boldsymbol x,r)}\left|\int_{\mathbb{R}^n}\big(f(\boldsymbol t)\overline{\phi_{\boldsymbol a,\boldsymbol y,\alpha}(\boldsymbol t)}-f(\boldsymbol t)\overline{\psi_{\boldsymbol a,\boldsymbol y,\alpha}(\boldsymbol t)}\big)d\boldsymbol t\right| d\boldsymbol y&\leq & \frac{1}{r^{\nu}}\int_{B(\boldsymbol x,r)}\int_{\mathbb{R}^n}\left|f(\boldsymbol t)\big(\phi_{\boldsymbol a,\boldsymbol y,\alpha}(\boldsymbol t)-\psi_{\boldsymbol a,\boldsymbol y,\alpha}(\boldsymbol t)\big)\right|d\boldsymbol t d\boldsymbol y\\
&=& \frac{1}{r^{\nu}}\int_{B(\boldsymbol x,r)}\bigg(\int_{\mathbb{R}^n}\frac{1}{\sqrt{|\boldsymbol a|_{p}}}|f(\boldsymbol t)|\left|\phi\bigg(\frac{\boldsymbol t-\boldsymbol y}{\boldsymbol a}\bigg)-\psi\left(\frac{\boldsymbol t-\boldsymbol y}{\boldsymbol a}\right)\right|d\boldsymbol t\bigg) d\boldsymbol y\\
&=& \frac{\sqrt{|\boldsymbol a|_{p}}}{r^{\nu}}\int_{B(\boldsymbol x,r)}\left(\int_{\mathbb{R}^n}|f(\boldsymbol a\boldsymbol u+\boldsymbol y)||\phi(\boldsymbol u)-\psi(\boldsymbol u)|d\boldsymbol u\right) d\boldsymbol y\\
&=&\frac{\sqrt{|\boldsymbol a|_{p}}}{r^{\nu}}\int_{\mathbb{R}^n}|\phi(\boldsymbol u)-\psi(\boldsymbol u)|\left(\int_{B(\boldsymbol x,r)}|f(\boldsymbol a\boldsymbol u+\boldsymbol y)|d\boldsymbol y\right)d\boldsymbol u\\
&=&\sqrt{|\boldsymbol a|_{p}}\int_{\mathbb{R}^n}|\phi(\boldsymbol u)-\psi(\boldsymbol u)|\left(\frac{1}{r^{\nu}}\int_{B(\boldsymbol a\boldsymbol u+\boldsymbol x,r)}|f(\boldsymbol z)|d\boldsymbol z\right)d\boldsymbol u.\\
\end{eqnarray*}
Using equation (\ref{P1eqn36}), we get
$$\frac{1}{r^{\nu}}\int_{B(\boldsymbol x,r)}\left|\int_{\mathbb{R}^n}\big(f(\boldsymbol t)\overline{\phi_{\boldsymbol a,\boldsymbol y,\alpha}(\boldsymbol t)}-f(\boldsymbol t)\overline{\psi_{\boldsymbol a,\boldsymbol y,\alpha}(\boldsymbol t)}\big)d\boldsymbol t\right| d\boldsymbol y\leq\sqrt{|\boldsymbol a|_{p}}\|f\|_{L^{1,\nu}_{M}(\mathbb{R}^n)}\int_{\mathbb{R}^n}|\phi(\boldsymbol u)-\psi(\boldsymbol u)|d\boldsymbol u.$$

Therefore,
\begin{equation}\label{P1eqn39}
\sup_{\boldsymbol x\in\mathbb{R}^n,r>0}\frac{1}{r^{\nu}}\int_{B(\boldsymbol x,r)}\left|\int_{\mathbb{R}^n}\big(f(\boldsymbol t)\overline{\phi_{\boldsymbol a,\boldsymbol y,\alpha}(\boldsymbol t)}-f(\boldsymbol t)\overline{\psi_{\boldsymbol a,\boldsymbol y,\alpha}(\boldsymbol t)}\big)d\boldsymbol t\right| d\boldsymbol y\leq\sqrt{|\boldsymbol a|_{p}}\|f\|_{L^{1,\nu}_{M}(\mathbb{R}^n)}\|\phi-\psi\|_{L^1(\mathbb{R}^n)}.
\end{equation}
From equation (\ref{P1eqn38}) and equation (\ref{P1eqn39}), the theorem follows immediately.
\end{proof}
\begin{theorem}\label{P1theo4.4}
Let $\psi$ be a wavelet and $f,g\in L^{1,\nu}_{M}(\mathbb{R}^n).$ Then
$$\|(W_{\psi,\alpha}f)(\cdot,\boldsymbol a)-(W_{\psi,\alpha}g)(\cdot,\boldsymbol a)\|_{L^{1,\nu}_{M}(\mathbb{R}^n)}\leq \sqrt{|\boldsymbol a|_{p}}\|f-g\|_{L^{1,\nu}_{M}(\mathbb{R}^n)}\|\psi\|_{L^1(\mathbb{R}^n)}.$$ 
\end{theorem}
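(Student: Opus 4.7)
The plan is to reduce this theorem directly to Theorem \ref{P1theo4.2} by exploiting the linearity of the CFrWT in its first argument. Since the definition of $W_{\psi,\alpha}$ integrates $f$ against the fixed kernel $\overline{\psi_{\boldsymbol a,\boldsymbol b,\alpha}}$, for any fixed $(\boldsymbol b,\boldsymbol a)$ we have
$$(W_{\psi,\alpha}f)(\boldsymbol b,\boldsymbol a)-(W_{\psi,\alpha}g)(\boldsymbol b,\boldsymbol a)=(W_{\psi,\alpha}(f-g))(\boldsymbol b,\boldsymbol a).$$
Moreover, $L^{1,\nu}_{M}(\mathbb{R}^n)$ is a Banach space (by the definition preceding the lemma), and hence it is closed under finite linear combinations, so $f-g\in L^{1,\nu}_{M}(\mathbb{R}^n)$ with $\|f-g\|_{L^{1,\nu}_{M}(\mathbb{R}^n)}$ making sense.

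Given this, I would simply apply Theorem \ref{P1theo4.2} to the function $f-g$ in place of $f$, which yields
$$\|(W_{\psi,\alpha}(f-g))(\cdot,\boldsymbol a)\|_{L^{1,\nu}_{M}(\mathbb{R}^n)}\leq \sqrt{|\boldsymbol a|_{p}}\|\psi\|_{L^1(\mathbb{R}^n)}\|f-g\|_{L^{1,\nu}_{M}(\mathbb{R}^n)},$$
and combining this with the linearity identity above gives exactly the desired inequality.

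If one prefers a self-contained argument rather than citing Theorem \ref{P1theo4.2}, the same calculation can be repeated verbatim with $f$ replaced by $f-g$: first bound $|(W_{\psi,\alpha}(f-g))(\boldsymbol b,\boldsymbol a)|$ by $\sqrt{|\boldsymbol a|_{p}}\int_{\mathbb{R}^n}|(f-g)(\boldsymbol a\boldsymbol u+\boldsymbol b)||\psi(\boldsymbol u)|d\boldsymbol u$ using the substitution $\frac{\boldsymbol t-\boldsymbol b}{\boldsymbol a}=\boldsymbol u$ as in equation (\ref{P1eqn33}); then integrate over the ball $B(\boldsymbol x,r)$, use Fubini to swap the order of integration, and recognize the inner integral as a ball $B(\boldsymbol a\boldsymbol u+\boldsymbol x,r)$ whose normalized integral is dominated by $\|f-g\|_{L^{1,\nu}_{M}(\mathbb{R}^n)}$ as in equation (\ref{P1eqn36}); finally take the supremum over $\boldsymbol x\in\mathbb{R}^n$ and $r>0$. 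There is no real obstacle here, since the Morrey norm is translation-invariant in the relevant sense and the entire argument is just a reapplication of the estimate used to prove Theorem \ref{P1theo4.2}; the only thing to note is that $(W_{\psi,\alpha}(f-g))(\cdot,\boldsymbol a)\in L^1(\mathbb{R}^n)$ follows from Lemma \ref{P1lemma4.3} applied to $f-g$, so the supremum in the Morrey norm is well defined.
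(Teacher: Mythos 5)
Your proposal is correct. The paper's own proof does not invoke linearity; it simply repeats the computation from Theorem \ref{P1theo4.2} with $|f(\boldsymbol t)-g(\boldsymbol t)|$ in place of $|f(\boldsymbol t)|$ --- i.e.\ it carries out, line by line, exactly the ``self-contained'' fallback you describe at the end (substitution $\frac{\boldsymbol t-\boldsymbol y}{\boldsymbol a}=\boldsymbol u$, Fubini, the translated-ball estimate of equation (\ref{P1eqn36}), then the supremum). Your primary route --- observing that $W_{\psi,\alpha}$ is linear in its argument, so $(W_{\psi,\alpha}f)(\cdot,\boldsymbol a)-(W_{\psi,\alpha}g)(\cdot,\boldsymbol a)=(W_{\psi,\alpha}(f-g))(\cdot,\boldsymbol a)$, and then applying Theorem \ref{P1theo4.2} to $f-g\in L^{1,\nu}_{M}(\mathbb{R}^n)$ --- is a cleaner packaging of the identical estimate and avoids the duplication; the only point worth making explicit is that the defining integrals for $W_{\psi,\alpha}f$ and $W_{\psi,\alpha}g$ both converge for almost every $\boldsymbol b$ (which Lemma \ref{P1lemma4.3} already guarantees), so the pointwise linearity identity is legitimate where it is needed. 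Either way the result follows; there is no gap.
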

\begin{proof}
We have, 
\begin{equation}\label{P1eqn40}
\|(W_{\psi,\alpha}f)(\cdot,\boldsymbol a)-(W_{\psi,\alpha}g)(\cdot,\boldsymbol a)\|_{L^{1,\nu}_{M}(\mathbb{R}^n)}=\sup_{\boldsymbol x\in\mathbb{R}^n,r>0}\frac{1}{r^{\nu}}\int_{B(\boldsymbol x,r)}\left|\int_{\mathbb{R}^n}\big(f(\boldsymbol t)\overline{\psi_{\boldsymbol a,\boldsymbol y,\alpha}(\boldsymbol t)}-g(\boldsymbol t)\overline{\psi_{\boldsymbol a,\boldsymbol y,\alpha}(\boldsymbol t)}\big)d\boldsymbol t\right| d\boldsymbol y.
\end{equation}
Now,
\begin{eqnarray*}
\frac{1}{r^{\nu}}\int_{B(\boldsymbol x,r)}\left|\int_{\mathbb{R}^n}\big(f(\boldsymbol t)\overline{\psi_{\boldsymbol a,\boldsymbol y,\alpha}(\boldsymbol t)}-g(\boldsymbol t)\overline{\psi_{\boldsymbol a,\boldsymbol y,\alpha}(\boldsymbol t)}\big)d\boldsymbol t\right| d\boldsymbol y&\leq & \frac{1}{r^{\nu}}\int_{B(\boldsymbol x,r)}\int_{\mathbb{R}^n}|f(\boldsymbol t)-g(\boldsymbol t)||\psi_{\boldsymbol a,\boldsymbol y,\alpha}(\boldsymbol t)|d\boldsymbol t d\boldsymbol y\\
&=& \frac{1}{r^{\nu}}\int_{B(\boldsymbol x,r)}\left(\int_{\mathbb{R}^n}\frac{1}{\sqrt{|\boldsymbol a|_{p}}}|f(\boldsymbol t)-g(\boldsymbol t)|\left|\psi\left(\frac{\boldsymbol t-\boldsymbol y}{\boldsymbol a}\right)\right|d\boldsymbol t\right) d\boldsymbol y\\
&=& \frac{\sqrt{|\boldsymbol a|_{p}}}{r^{\nu}}\int_{B(\boldsymbol x,r)}\left(\int_{\mathbb{R}^n}|f(\boldsymbol a\boldsymbol u+\boldsymbol y)-g(\boldsymbol a\boldsymbol u+\boldsymbol y)||\psi(\boldsymbol u)|d\boldsymbol u\right) d\boldsymbol y\\
&=&\frac{\sqrt{|\boldsymbol a|_{p}}}{r^{\nu}}\int_{\mathbb{R}^n}|\psi(\boldsymbol u)|\left(\int_{B(\boldsymbol x,r)}|f(\boldsymbol a\boldsymbol u+\boldsymbol y)-g(\boldsymbol a\boldsymbol u+\boldsymbol y)|d\boldsymbol y\right)d\boldsymbol u\\
&=&\sqrt{|\boldsymbol a|_{p}}\int_{\mathbb{R}^n}|\psi(\boldsymbol u)|\left(\frac{1}{r^{\nu}}\int_{B(\boldsymbol a\boldsymbol u+\boldsymbol x,r)}|f(\boldsymbol z)-g(\boldsymbol z)|d\boldsymbol z\right)d\boldsymbol u\\
&\leq &\sqrt{|\boldsymbol a|_{p}}\|f-g\|_{L^{1,\nu}_{M}(\mathbb{R}^n)}\int_{\mathbb{R}^n}|\psi(\boldsymbol u)|d\boldsymbol u.\\
\end{eqnarray*}
Therefore,
\begin{equation}\label{P1eqn41}
\sup_{\boldsymbol x\in\mathbb{R}^n,r>0}\frac{1}{r^{\nu}}\int_{B(\boldsymbol x,r)}\left|\int_{\mathbb{R}^n}\big(f(\boldsymbol t)\overline{\psi_{\boldsymbol a,\boldsymbol y,\alpha}(\boldsymbol t)}-g(\boldsymbol t)\overline{\psi_{\boldsymbol a,\boldsymbol y,\alpha}(\boldsymbol t)}\big)d\boldsymbol t\right| d\boldsymbol y\leq\sqrt{|\boldsymbol a|_{p}}\|f-g\|_{L^{1,\nu}_{M}(\mathbb{R}^n)}\int_{\mathbb{R}^n}|\psi(\boldsymbol u)|d\boldsymbol u.
\end{equation}
The theorem follows from equation (\ref{P1eqn40}) and equation (\ref{P1eqn41}).
\end{proof}
We now give the main result of this section.
\begin{theorem}\label{P1theo4.5}
Let $\phi,\psi$ be two wavelets and $f,g\in L^{1,\nu}_{M}(\mathbb{R}^n).$ Then 
$$\|(W_{\phi,\alpha}f)(\cdot,\boldsymbol a)-(W_{\psi,\alpha}g)(\cdot,\boldsymbol a)\|_{L^{1,\nu}_{M}(\mathbb{R}^n)}\leq \sqrt{|\boldsymbol a|_{p}}\Big(\|f\|_{L^{1,\nu}_{M}(\mathbb{R}^n)}\|\phi-\psi\|_{L^1(\mathbb{R}^n)}+\|f-g\|_{L^{1,\nu}_{M}(\mathbb{R}^n)}\|\psi\|_{L^1(\mathbb{R}^n)}\Big).$$
\end{theorem}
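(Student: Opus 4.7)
The plan is to derive this as a direct consequence of Theorem \ref{P1theo4.3} and Theorem \ref{P1theo4.4} using the triangle inequality in the Morrey norm. I would first insert the intermediate term $(W_{\psi,\alpha}f)(\cdot,\boldsymbol a)$ and write the decomposition
\begin{equation*}
(W_{\phi,\alpha}f)(\cdot,\boldsymbol a)-(W_{\psi,\alpha}g)(\cdot,\boldsymbol a)=\bigl[(W_{\phi,\alpha}f)(\cdot,\boldsymbol a)-(W_{\psi,\alpha}f)(\cdot,\boldsymbol a)\bigr]+\bigl[(W_{\psi,\alpha}f)(\cdot,\boldsymbol a)-(W_{\psi,\alpha}g)(\cdot,\boldsymbol a)\bigr].
\end{equation*}
Since the Morrey space $L^{1,\nu}_{M}(\mathbb{R}^n)$ is a normed space (its norm (\ref{P1eqn32}) satisfies the triangle inequality as a supremum of seminorms over balls), taking $L^{1,\nu}_{M}$-norms on both sides gives
\begin{equation*}
\|(W_{\phi,\alpha}f)(\cdot,\boldsymbol a)-(W_{\psi,\alpha}g)(\cdot,\boldsymbol a)\|_{L^{1,\nu}_{M}(\mathbb{R}^n)}\leq \|(W_{\phi,\alpha}f)(\cdot,\boldsymbol a)-(W_{\psi,\alpha}f)(\cdot,\boldsymbol a)\|_{L^{1,\nu}_{M}(\mathbb{R}^n)}+\|(W_{\psi,\alpha}f)(\cdot,\boldsymbol a)-(W_{\psi,\alpha}g)(\cdot,\boldsymbol a)\|_{L^{1,\nu}_{M}(\mathbb{R}^n)}.
\end{equation*}

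The second step is to bound each of the two pieces separately. The first piece fits exactly the hypothesis of Theorem \ref{P1theo4.3} (same argument function $f$, two different wavelets $\phi,\psi$), yielding the bound $\sqrt{|\boldsymbol a|_{p}}\|f\|_{L^{1,\nu}_{M}(\mathbb{R}^n)}\|\phi-\psi\|_{L^1(\mathbb{R}^n)}$. The second piece fits exactly the hypothesis of Theorem \ref{P1theo4.4} (same wavelet $\psi$, two different argument functions $f,g$), yielding the bound $\sqrt{|\boldsymbol a|_{p}}\|f-g\|_{L^{1,\nu}_{M}(\mathbb{R}^n)}\|\psi\|_{L^1(\mathbb{R}^n)}$. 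Factoring out $\sqrt{|\boldsymbol a|_{p}}$ and summing these two estimates delivers the claimed inequality.

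There is no real obstacle to overcome here; the statement is essentially a ``mixed'' perturbation estimate obtained by chaining the two earlier perturbation bounds via the triangle inequality. The only thing one must verify is that the splitting is legitimate, which is immediate once one observes that the $L^{1,\nu}_{M}(\mathbb{R}^n)$-norm is subadditive and that Lemma \ref{P1lemma4.3} together with Theorem \ref{P1theo4.2} guarantees each of $(W_{\phi,\alpha}f)(\cdot,\boldsymbol a)$, $(W_{\psi,\alpha}f)(\cdot,\boldsymbol a)$ and $(W_{\psi,\alpha}g)(\cdot,\boldsymbol a)$ lies in $L^{1,\nu}_{M}(\mathbb{R}^n)$, so all three norms appearing in the chain are finite and the triangle inequality applies without issue.
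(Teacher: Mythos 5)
Your proof is correct and follows essentially the same route as the paper: insert the intermediate term $(W_{\psi,\alpha}f)(\cdot,\boldsymbol a)$, apply the triangle inequality in $L^{1,\nu}_{M}(\mathbb{R}^n)$, and invoke Theorem \ref{P1theo4.3} and Theorem \ref{P1theo4.4} for the two pieces. Your additional remark on the finiteness of the three norms is a harmless extra check that the paper leaves implicit.
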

\begin{proof}
We have
\begin{align*}
\|(W_{\phi,\alpha}f)(\cdot,\boldsymbol a)-(W_{\psi,\alpha}g)(\cdot,\boldsymbol a)&\|_{L^{1,\nu}_{M}(\mathbb{R}^n)}\\
&\leq\|(W_{\phi,\alpha}f)(\cdot,\boldsymbol a)-(W_{\psi,\alpha}f)(\cdot,\boldsymbol a)\|_{L^{1,\nu}_{M}(\mathbb{R}^n)}+\|(W_{\psi,\alpha}f)(\cdot,\boldsymbol a)-(W_{\psi,\alpha}g)(\cdot,\boldsymbol a)\|_{L^{1,\nu}_{M}(\mathbb{R}^n)}.
\end{align*}
Using theorem \ref{P1theo4.3} and theorem \ref{P1theo4.4}, the proof follows.
\end{proof}
\section{Conclusion}
 In this paper, we have introduced the CFrWT in $n$-dimensions, which is a  generalization of classical wavelet transform. This paper also serves as a generalization of the theory of CFrWT studied in \cite{bahri2017logarithmic,prasad2015continuous,prasad2014generalized}. We have derived the inner product relation of CFrWTs, of two argument functions, associated with two different wavelets.  The reconstruction formula for the CFrWT depending on two wavelets along with the reproducing kernel function, involving two wavelets, for the image space of CFrWT are also established. In section 4, using the idea of Guanlei et al. (\cite{guanlei2009logarithmic}) we have derived  Heisenberg's uncertainty and Local uncertainty inequalties in two fractional Fourier transform domain in $n$-dimensions, followed by the inequalities for CFrWT. Finally, in the last section of this paper we have studied the boundedness and approximation properties of CFrWT on Morrey space.
\bibliography{MasterCFrWT}
\bibliographystyle{plain}

\end{document}